\numberwithin{equation}{section}
\def\O{\mathcal{O}}
\def\cG{{\cal G}}
\def\cX{{\cal X}}
\newcommand{\revision}[2]{{\color{black}#2}}
\newcommand{\tsum}{\textstyle\sum}
\newcommand{\bbe}{\mathbb{E}}
\def\eqref#1{(\ref{#1})}
\newtheorem{assumption}{Assumption}
\newcommand{\bbr}{\Bbb{R}}
\newcommand{\beq}{\begin{equation}}
\newcommand{\eeq}{\end{equation}}
\newcommand{\beqa}{\begin{eqnarray}}
\newcommand{\eeqa}{\end{eqnarray}}
\newcommand{\beqas}{\begin{eqnarray*}}
\newcommand{\eeqas}{\end{eqnarray*}}
\def\bbe{{\mathbb{E}}}
\def\vgap{\vspace*{.1in}}
\newcommand{\nn}{\nonumber}
\renewcommand{\top}{{T}}
\def\cL{{\cal L}}
\def\cR{{\cal R}}
\def\cG{{\cal G}}
\def\cF{{\cal F}}
\def\cK{{\cal K}}
\def\cS{{\cal S}}
\def\cI{{\cal I}}
\def\vgap{\vspace*{.1in}}
\title{Projected gradient methods for nonconvex and stochastic smooth optimization: new complexities and auto-conditioned stepsizes
}
\author{
Guanghui Lan \and Tianjiao Li \and Yangyang Xu
\thanks{Coauthors of this paper are listed according to the alphabetic order.\\
GL and TL were partially supported by the Air Force Office of Scientific Research grant FA9550-22-1-0447. YX was partially supported by NSF grant DMS-2208394 and the Office of Naval Research grant N00014-22-1-2573.}
}
\institute{
Guanghui Lan\\
  School of Industrial and Systems
    Engineering, Georgia Institute of Technology, Atlanta, GA 30332.\\
    Email: {\tt george.lan@isye.gatech.edu}.
    \vspace{0.2cm}\\
Tianjiao Li\\
  School of Industrial and Systems
    Engineering, Georgia Institute of Technology, Atlanta, GA 30332.\\
    Email: {\tt tli432@gatech.edu}.
    \vspace{0.2cm}\\
    Yangyang Xu\\
  Department of Mathematical Sciences, Rensselaer Polytechnic Institute, Troy, NY 12180.\\
    Email: {\tt xuy21@rpi.edu}.\\
}
\date{the date of receipt and acceptance should be inserted later}
\titlerunning{Projected gradient methods for nonconvex and stochastic smooth optimization}
\begin{document}

\maketitle

\begin{abstract}
We present a novel class of projected gradient (PG) methods for minimizing a smooth but not necessarily convex function over a convex compact set. 
We first provide a novel analysis of the \revision{``vanilla'' PG}{constant-stepsize PG} method, achieving the best-known iteration complexity for finding an approximate stationary point of the problem. We then develop an ``auto-conditioned'' projected gradient (AC-PG) variant that achieves the same iteration complexity without requiring the input of the Lipschitz constant of the gradient or any line search procedure. The key idea is to estimate the Lipschitz constant using first-order information gathered from the previous iterations, and to show that the error caused by underestimating the Lipschitz constant can be properly controlled. We then generalize the PG methods to the stochastic setting, by proposing a stochastic projected gradient (SPG) method and a variance-reduced stochastic gradient (VR-SPG) method, achieving new complexity bounds in different oracle settings. We also present auto-conditioned stepsize policies for both stochastic PG methods and establish comparable convergence guarantees.
\end{abstract}
\keywords{nonconvex optimization \and stochastic optimization \and complexity bounds \and projected gradient methods \and parameter-free methods \and auto-conditioned stepsizes}

\subclass{90C25 \and 90C15 \and 90C06 \and 	62L20  \and 68Q25}

\section{Introduction}\label{sec_intro}
Nonconvex optimization has recently found broad applications in machine learning \cite{mairal2009online, mason1999boosting, shi2014sparse, liu2015sparse}, reinforcement learning  \cite{puterman2014markov, sutton2018reinforcement}, and  simulation optimization \cite{andradottir1998review, fu2002optimization}. In this paper, we consider the following general nonlinear, possibly nonconvex, optimization problem 
\begin{align}\label{main_prob}
f^* := \min_{x \in X} f(x),
\end{align}
where $X \subseteq \bbr^n$ is a convex compact set with diameter $D_X := \max_{x, y\in X}\|x-y\|$, and $f:X\rightarrow \mathbb{R}$ is a continuously differentiable but possibly nonconvex function. Let $x^*$ denote one minimizer of \eqref{main_prob} and assume $f$ is smooth with Lipschitz continuous gradient, i.e., for some $L \geq 0$,
\begin{align}\label{smoothness}
\|\nabla f(x) - \nabla f(y)\| \leq L\|x - y\|,\quad \forall x, y \in X.
\end{align}
It indicates that $L$ is the upper curvature bound such that
\begin{align}\label{smoothness_2}
f(x) - f(y) - \langle \nabla f(y), x - y\rangle \leq \tfrac{L}{2}\|x-y\|^2, \quad \forall x, y \in X.
\end{align}
Additionally, we assume that $f$ is weakly convex with a lower curvature parameter $l \in [0, L]$, i.e.,
\begin{align}\label{weakly_convex}
f(x)- f(y) - \langle \nabla f(y), x - y\rangle \geq - \tfrac{l}{2}\|x-y\|^2, \quad \forall x, y \in X.
\end{align}

It is easy to see that $f$ is a convex function \revision{}{on $X$} when $l = 0$. Incorporating the lower curvature parameter $l$ enables a unified treatment for both convex and nonconvex problems. This unified approach is particularly valuable for solving real-world nonlinear programming problems. For instance, many general nonconvex objective functions exhibit local convexity, and a unified treatment allows the algorithm to leverage this local convexity structure. \revision{Moreover}{Additionally}, in black-box settings where the convexity of the objective function is typically unknown,  a unified approach helps address such structural ambiguity effectively. \revision{}{Moreover, many smooth optimization problems arising from machine learning and data science applications can have much larger upper curvature than lower curvature, such as the nonconvex quadratic program, $\min_x~\tfrac{1}{2} x^\top A x + c^\top x$, where $\lambda_{\max}(A) > -\lambda_{\min} (A) \geq 0$. Another prominent example is the minimization of a smooth convex loss with a weakly convex regularizer, such as smoothly clipped absolute deviation (SCAD) and minimax concave penalty (MCP). In such settings, leveraging the lower-curvature information allows us to design more efficient optimization algorithms.}

First-order methods are widely used to tackle nonconvex optimization problems.  
In particular, it is well-known that the \revision{``vanilla''}{} gradient descent method requires at most $\mathcal{O}\big(L (f(x_0) - f^*) / \epsilon^2\big)$ gradient evaluations to achieve an $\epsilon$-stationary point such that $\|\nabla f(x)\| \leq \epsilon$, for unconstrained problems (see, e.g., \cite{nesterov2004introductory}). This bound is tight for gradient descent, as supported by Cartis et al. \cite{cartis2010complexity}. In the stochastic setting with access to an unbiased stochastic first-order oracle, Ghadimi and Lan \cite{ghadimi2013stochastic} proposed a stochastic gradient method, achieving $\mathcal{O}(1/\epsilon^4)$ sample complexity for outputting an $\epsilon$-stationary solution where $\bbe[\|\nabla f(x)\|^2] \leq \epsilon^2$. 
Moreover, Ghadimi et al. \cite{GhaLanZhang13-1} analyzed the projected gradient descent method and its stochastic variants for constrained composite problems and established the same complexity bounds as in \cite{ghadimi2013stochastic}, for achieving an $\epsilon$-approximate stationary point in terms of the projected gradient mapping (see Subsection \ref{subsec_notation}). 
Further studies on stochastic subgradient methods for nonsmooth nonconvex problems can be found in \cite{davis2019stochastic}.  In general, the $\mathcal{O}(1/\epsilon^4)$ sample complexity is optimal for smooth nonconvex stochastic problems, as supported by \cite{arjevani2023lower}. However, with an additional assumption on the Lipschitz continuity of the stochastic gradient, the sample complexity can be reduced to $\mathcal{O}(1/\epsilon^3)$ through variance-reduction techniques, see, e.g., \cite{LanBook2020, fang2018spider, cutkosky2019momentum, wang2019spiderboost, tran2022hybrid, xu2023momentum, mancino2023proximal}.

Despite these advances, all the above-mentioned analyses of (projected) gradient descent and its stochastic variants do not leverage the lower curvature information $l$, thus failing to provide a unified treatment of convex and nonconvex problems. 
Notably, Ghadimi and Lan \cite{ghadimi2016accelerated} proposed an accelerated gradient method (generalized from Nesterov's accelerated gradient descent \cite{nesterov1983method}) and established a unified gradient complexity of 
\beq \label{eq:acc_complexity1}
\mathcal{O}\big((\tfrac{LD_X}{\epsilon})^{\frac{2}{3}} + \tfrac{L l D_X^2}{\epsilon^2}\big)
\eeq 
in the deterministic setting; see further developments for accelerated gradient methods with unified complexity bounds in \cite{liang2021average, liang2023average}. 
On the other hand, proximal-point methods \cite{rockafellar1976monotone} have also been applied to tackle nonconvex optimization problems \cite{hare2009computing, davis2019proximally,LanOuyangZhang2023}, where the best-known complexity for smooth nonconvex problems is given by
\beq \label{eq:acc_complexity2}
\mathcal{O}\big( \sqrt{\tfrac{L\|x_0-x^*\|}{\epsilon}} + \tfrac{\sqrt{Ll}(f(x_0) - f(x^*))}{\epsilon^2} \big), 
\eeq
as shown in \cite{LanOuyangZhang2023}.
Observe that, while the first term in \eqref{eq:acc_complexity2} improves upon the corresponding term in \eqref{eq:acc_complexity1}
especially in terms of its dependence on $\epsilon$, the second term in \eqref{eq:acc_complexity2} is not necessarily better than its counterpart in \eqref{eq:acc_complexity1} when the feasible set is bounded. This is because the second term can only be bounded by $\sqrt{lL^3} D_X^2 / \epsilon^2$, as indicated in \eqref{smoothness_2}.
Compared to the aforementioned accelerated gradient methods or proximal-point type methods, the ``vanilla'' gradient descent method and its stochastic variants are more broadly applicable in modern large-scale optimization tasks, such as training \revision{large language models \cite{zhao2023survey}}{large-scale machine learning models \cite{LanBook2020, bottou2018optimization}}, due to their simple algorithmic structure and low memory requirements. However, it remains unclear whether gradient descent and its stochastic variants can achieve the unified complexities that incorporate the lower curvature information.

Another common challenge in implementing the first-order methods lies in the choice of the stepsizes. In particular, we usually need to estimate the upper curvature $L$ to define the stepsizes, but $L$ can be unavailable or difficult to compute in many practical scenarios. For example, for a quadratic function $f(x) := \tfrac{1}{2}x^\top A x$, it can be time-consuming to estimate $\|A\|$ when $A$ is a large-scale dense matrix. Additionally, even when $L$ is available, the stepsizes that rely on this global parameter can be too conservative, leading to slow convergence of the algorithms. Backtracking or line search procedures \cite{armijo1966minimization} are commonly used in tackling this issue. 
However, line search subroutines usually increase the per-iteration computational cost. Moreover, performing line search or backtracking in the stochastic settings can be particularly challenging, due to the additional ``bias'' of the stochastic gradient estimators created by the search stopping condition. To alleviate these limitations, there is a growing interest in the development of adaptive first-order methods that do not require line search procedures, especially for convex optimization (see, e.g., \cite{orabona2016coin, carmon2022making, ivgi2023dog, malitsky2019adaptive, latafat2023adaptive, li2023simple, zhou2024adabb, lan2024auto}). Notably, the recent work \cite{li2023simple} proposed an ``auto-conditioned'' fast gradient method (AC-FGM), achieving the uniformly optimal rate for smooth, weakly smooth, and nonsmooth convex optimization problems. The term ``auto-conditioned” refers to the property that the method can automatically adapt to the problem parameters, such as the upper curvature $L$, without the employment of line search procedures. A key motivating question for our study is whether it is feasible to develop such ``auto-conditioned'' methods for nonconvex optimization.

\subsection{Contributions and organization} This paper addresses the above challenges by providing new complexity bounds for a class of projected gradient methods that unify convex and nonconvex optimization problems\footnote{\revision{}{Here, we are referring to unifying the theory for the ``short-stepsize'' projected gradient method between convex and nonconvex settings. The recent research line for long-stepsize gradient descent \cite{grimmer2025accelerated, bok2024accelerating} with improved convergence rates is beyond the scope of this paper.}}. We also introduce novel auto-conditioned stepsize policies that do not require prior knowledge of upper or lower curvatures, or the employment of line search procedures. Our contributions are summarized as follows.

Firstly, considering a compact feasible region $X$, we provide a novel analysis of the \revision{``vanilla''}{constant-stepsize} projected gradient (PG) method that incorporates both upper and lower curvatures. In particular, we show that PG achieves an $\epsilon$-approximate stationary point with at most 
\begin{align}\label{bound_1}
\mathcal{O}\left( \tfrac{LD_X}{\epsilon} + \tfrac{L l D_X^2}{\epsilon^2} \right)
\end{align}
iterations. For nonconvex problems ($l >0$), the dominating term matches the best-known complexity bound achieved by accelerated methods \cite{ghadimi2016accelerated}. For convex problems ($l = 0$), the complexity bound reduces to $
\mathcal{O}\left( \tfrac{LD_X}{\epsilon}\right),
$
matching the sharp convergence rate of PG for \revision{}{smooth convex optimization \cite{teboulle2023elementary} up to a constant factor}. \revision{}{Here, we only refer to projected gradient methods with short stepsizes less than $2/L$. Long-stepsize variants of gradient descent with improved convergence rates can be found in \cite{grimmer2025accelerated, bok2024accelerating,kim2021optimizing} and references therein.}

Secondly, we develop an auto-conditioned variant of the projected gradient method (AC-PG) that does not require the input of either the upper curvature $L$ or the lower curvature $l$, or any line search procedures. AC-PG only uses the past iterates to estimate the upper curvature in each iteration. Although the potential underestimation of $L$ could cause increases in function value, the total number of iterations with such increases can be properly bounded, and consequently, the error can be properly controlled. In particular, AC-PG attains the following oracle complexity 
$$
\mathcal{O}\left( \tfrac{LD_X}{\epsilon} +\tfrac{L l D_X^2}{\epsilon^2} + \log \tfrac{L}{L_0}\right),
$$
matching the new complexity \eqref{bound_1} of PG up to an additive constant. 

Thirdly, we extend the new analyses of PG to the stochastic setting where only noisy first-order information about $f$ is available. We first propose a stochastic projected gradient (SPG) method equipped with mini-batches and establish the new sample complexity bound
\begin{align}\label{bound_2}
\mathcal{O}\left(\tfrac{L D_X}{\epsilon} + \tfrac{L l D_X^2}{\epsilon^2} + \tfrac{LD_X \sigma^2}{\epsilon^3} + \tfrac{Ll D_X^2\sigma^2}{\epsilon^4} \right),
\end{align}
for finding an $\epsilon$-approximate solution in expectation. 
Here $\sigma^2$ is an upper bound on the variance of the stochastic gradient.
This complexity improves the one in \cite{GhaLanZhang13-1} by incorporating the lower curvature information. Moreover, we propose auto-conditioned stepsizes for SPG, namely AC-SPG, under the extra assumption on the Lipschitz continuity of the stochastic gradient and establish $\mathcal{O}(1/\epsilon^4)$ sample complexity bounds \revision{}{that incorporate the lower curvature $l$.}
Furthermore, we present a two-phase variant of AC-SPG together with large deviation results. 

Fourthly, under the extra assumption on Lipschitz continuity of the stochastic gradient, we further improve the sample complexity \eqref{bound_2} by incorporating the variance-reduction techniques. Specifically, our novel variance-reduced stochastic projected gradient (VR-SPG) method can obtain an $\epsilon$-approximate solution with a sample complexity of
\begin{align}\label{bound_3}
\O\left( \tfrac{\bar\cL D_X}{\epsilon} + \tfrac{\bar\cL lD_X^2}{\epsilon^2} + \tfrac{\sigma^2}{\epsilon^2} \log \tfrac{\sigma}{\epsilon}  + \tfrac{\bar\cL D_X\sigma}{\epsilon^2} + \tfrac{\bar\cL l D_X^2 \sigma}{\epsilon^3}\right),
\end{align}
where $\bar \cL$ is the mean-squared upper bound of the Lipschitz constant of the stochastic gradient. To the best of our knowledge, this is the first $\mathcal{O}(1/\epsilon^3)$ complexity bound for nonconvex optimization that takes advantage of the lower curvature $l$. When considering a convex problem with $l=0$, the \revision{}{dominating term in} complexity bound reduces to $\widetilde \O (1/\epsilon^2)$, which is nearly optimal for \revision{}{stochastic convex optimization when $\sigma$ is large}. Additionally, we propose auto-conditioned stepsize policies for VR-SPG, namely AC-VR-SPG, and establish $\mathcal{O}(1/\epsilon^3)$ sample complexity guarantees \revision{}{for stochastic nonconvex optimization}\footnote{\revision{}{It should be noted that, for AC-VR-SPG, establishing unified complexity bounds that incorporate the lower curvature $l$ as in \eqref{bound_3} is highly nontrivial. This is because the algorithm also exploits the Lipschitz continuity of stochastic gradients in the variance-reduction operators, which introduces additional challenges in controlling the error caused by underestimation of the local Lipschitz constants. Therefore, we leave this question for future work.}}.

The remaining sections of the paper are organized as follows. In Section~\ref{sec_deterministic}, we present the novel analysis and new complexities for the PG method, together with its parameter-free variant, AC-PG. In Section~\ref{sec_stochastic}, we introduce the SPG method for the stochastic setting and provide new complexity bounds, as well as the parameter-free AC-SPG method with both expectation and large-deviation bounds. Section \ref{sec_variance_reduction} further equips SPG with the variance-reduction techniques, establishes improved complexity bounds, and provides auto-conditioned stepsize policies. Finally, in Section~\ref{sec_numerics}, we present numerical experiments that demonstrate the empirical advantages of the proposed methods.

\subsection{Notation}\label{subsec_notation}
In this paper, we use the convention that $\tfrac{0}{0} = 0$. Unless stated otherwise, we let $\langle \cdot, \cdot\rangle$ denote the Euclidean inner product and $\|\cdot\|$ denote the corresponding Euclidean norm ($\ell_2$-norm). Given a matrix $A$, we use $\|A\|$ to denote its spectral norm. We let $[k]$ denote the set $\{1, ..., k\}$. For $a, b \in \mathbb{Z}_+$, we use $\mathrm{mod}(a, b)$ to denote the modulo operation, i.e., the remainder after $a$ is divided by $b$.

For convenience, we define
\begin{align}
P_X(x, g, \gamma) := \gamma(x - x^+), \text{ where  
$x^+ = \arg \min_{u \in X}\left\{\langle g, u\rangle + \tfrac{\gamma}{2}\|x-u\|^2\right\}.$}
\end{align}
If we take $g = \nabla f(x)$, then $P_X(x, \nabla f(x), \gamma)$ defines a projected gradient of $f$. Clearly, $P_X(x, \nabla f(x), \gamma)$ reduces to $\nabla f(x)$ if $x^+$ is an interior point of $X$. It follows from Lemma 6.3 in \cite{LanBook2020} that if $\|P_X(x, \nabla f(x), \gamma)\| \leq \epsilon$, then
\begin{align*}
-\nabla f(x^{+}) \in N_X(x^{+}) + B\big( \epsilon (\tfrac{L}{\gamma} + 1)\big),
\end{align*}
where $N_X$ denotes the normal cone of $X$ given by $N_X(x^{+}):= \{d \in \bbr^n: \langle d, x- x^+\rangle\leq 0, \forall x \in X \}$ and $B(r):= \{x \in \bbr^n: \|x\|\leq r\}$. Therefore, when $\gamma = \Omega(L)$, the condition $\|P_X(x, \nabla f(x), \gamma)\| \leq \epsilon$ indicates that $x^+$ satisfies the first-order optimality condition with an $\epsilon$-level perturbation, thus it can serve as a valid termination criterion.

\section{Projected gradient methods for nonconvex optimization}\label{sec_deterministic}
In this section, we consider the deterministic nonconvex optimization where we have access to exact first-order information. In Subsection \ref{subsec_deterministic_1}, we present a novel analysis of the \revision{``vanilla''}{constant-stepsize} projected gradient (PG) method that achieves the best-known convergence rate for finding an approximate stationary point, unifying convex and nonconvex optimization. In Subsection~\ref{subsec_deterministic_2}, we introduce an ``auto-conditioned'' variant of the PG method, which does not require the input of the Lipschitz constant of the gradient or any line search procedure, but can still achieve the same convergence rate.

\subsection{A novel analysis of the projected gradient (PG) method}\label{subsec_deterministic_1}
We consider the simplest projected gradient (PG) method in Algorithm \ref{alg_1_0}. 
\begin{algorithm}[H]\caption{Projected gradient (PG) method}\label{alg_1_0}
	\begin{algorithmic}
		\State{\textbf{Input}: initial point $x_0 \in X$. 
  }
		\For{$t=1,\cdots, k$}
		\State{
  \begin{align}
  x_t &= \arg \min_{x\in X} \left\{\langle \nabla f(x_{t-1}) , x \rangle + \tfrac{\gamma_t}{2}\|x_{t-1} - x\|^2 \right\}. \label{prox-mapping_0}
  \end{align}
  }
		\EndFor
	\end{algorithmic}
\end{algorithm}
Here, the update \eqref{prox-mapping_0} can be equivalently written as 
\begin{align*}
    x_t = \mathcal{P}_X\left(x_{t-1} - \gamma_t^{-1} \nabla f(x_{t-1})\right),
\end{align*}
where $\mathcal{P}_X$ defines the Euclidean projection onto the feasible region $X$, and $\gamma_t^{-1}$ can be viewed as the stepsize. For simplicity, we let $g_{X, t}$ denote the projected gradient at step $t$, i.e.,
\begin{align}\label{def_projected_gradient}
g_{X, t} \equiv P_X(x_t, \nabla f(x_t), \gamma_{t+1}) := \gamma_{t+1}(x_{t} - x_{t+1}).
\end{align}

It is well-known that the (projected) gradient descent method can find an $\epsilon$-stationary solution of problem~\eqref{main_prob} s.t. $\|g_{X, k}\| \le \epsilon$ within at most $k = \mathcal{O}\left([L (f(x_0) - f^*)]/\epsilon^2\right)$ iterations. However, this complexity bound does not take advantage of the weak-convexity property \eqref{weakly_convex}. In the following theorem, we present a novel analysis for PG and obtain a new complexity bound which reflects both upper curvature $L$ and lower curvature $l$.
\begin{theorem}\label{thm_0}
Let $\{x_t\}$ be generated by Algorithm \ref{alg_1_0} \revision{}{with an initial search point $x_0$ and stepsize} $\gamma_t = \gamma \geq L, \forall\, t$. \revision{}{Let $\mathcal{E}_X(x_0):= \{x \in X| f(x) \leq f(x_0)\}$ and $D_{X, x_0}:= \max_{x, y\in \mathcal{E}_X(x_0)}\|x-y\|$.}  Then we have
\begin{align}\label{res_1_thm_0}
\min_{0\leq t \leq k-1}\|g_{X,t}\|^2 \leq \left(\tsum_{t=1}^{k} \big(\tfrac{2t-1}{2\gamma} - \tfrac{tL}{2\gamma^2}\big)\right)^{-1}\left(\tfrac{\gamma}{2}\revision{D_X^2}{\|x_0 - x^*\|^2} + \tfrac{lk}{2}\revision{D_X^2}{D_{X, x_0}^2}\right).
\end{align}
Specifically, if $\gamma_t = L, \forall\, t$, then
\begin{align}\label{res_2_thm_0}
\min_{0\leq t \leq k-1}\|g_{X,t}\|^2 \leq \tfrac{2L^2\revision{D_X^2}{\|x_0 - x^*\|^2}}{k(k-1)} + \tfrac{2 L l \revision{D_X^2}{D_{X, x_0}^2}}{k-1}.
\end{align}
\end{theorem}
\begin{proof}
By the optimality condition of \eqref{prox-mapping_0}, we have the following three-point inequality, i.e., for any $x \in X$,
\begin{align*}
\langle \nabla f(x_{t-1}), x_t - x \rangle + \tfrac{\gamma_t}{2}\|x_{t-1} - x_t\|^2 + \tfrac{\gamma_t}{2}\|x_t - x\|^2 \leq \tfrac{\gamma_t}{2}\|x_{t-1} - x\|^2.
\end{align*}
Adding $f(x_{t-1})$ on both sides of the above inequality and rearranging the terms, we have
\begin{align}\label{eq_1_0_0}
&f(x_{t-1}) + \langle \nabla f(x_{t-1}), x_t - x_{t-1}\rangle + \tfrac{\gamma_t}{2}\|x_{t-1} - x_t\|^2 + \tfrac{\gamma_t}{2}\|x_t - x\|^2\nn\\
&\leq f(x_{t-1}) +\langle \nabla f(x_{t-1}), x - x_{t-1}\rangle + \tfrac{\gamma_t}{2}\|x_{t-1} - x\|^2.
\end{align}
Then using the smoothness condition \eqref{smoothness_2} on the LHS of the above inequality, we obtain
\begin{align}\label{eq_1_0}
&f(x_t) + (\tfrac{\gamma_t}{2} - \tfrac{L}{2})\|x_{t-1} - x_t\|^2 + \tfrac{\gamma_t}{2}\|x_t - x\|^2\nn\\
&\leq f(x_{t-1}) +\langle \nabla f(x_{t-1}), x - x_{t-1}\rangle + \tfrac{\gamma_t}{2}\|x_{t-1} - x\|^2.
\end{align}
Taking $x = x_{t-1}$ in the above inequality provides us with
\begin{align}\label{eq_2_0}
(\gamma_t -\tfrac{L}{2})\|x_t - x_{t-1}\|^2 \leq f(x_{t-1}) - f(x_t).
\end{align}
Clearly, this inequality indicates that the function value is non-increasing over iterations as we choose $\gamma_t \geq L/2$. \revision{}{Therefore, we have  $\forall t \in \mathbb{Z}_+, x_t \in \mathcal{E}_{X}(x_0)$, and consequently, $\|x_t - x^*\|^2 \leq D_{X, x_0}^2$.}
Setting $x = x^*$ in Ineq.~\eqref{eq_1_0} and using the weakly convex condition $f(x_{t-1}) +\langle \nabla f(x_{t-1}), x^* - x_{t-1}\rangle \leq \revision{}{f(x^*)}+ l\|x_{t-1} - x^*\|^2/2$, we obtain
\begin{align}\label{eq_3_0}
&f(x_t) + (\tfrac{\gamma_t}{2} - \tfrac{L}{2})\|x_{t-1} - x_t\|^2 + \tfrac{\gamma_t}{2}\|x_t - x^*\|^2\leq f(x^*) + \tfrac{l}{2}\|x_{t-1} - x^*\|^2 + \tfrac{\gamma_t}{2}\|x_{t-1} - x^*\|^2.
\end{align}
By taking a telescope sum of the above inequality from $t=1$ to $k$, and invoking $\gamma_t = \gamma$, we have
\begin{align}\label{eq_4_0}
&\tsum_{t=1}^k \left[f(x_t) -f(x^*) + (\tfrac{\gamma}{2} - \tfrac{L}{2})\|x_{t-1} - x_t\|^2 \right]\nn\\
& \leq \tfrac{\gamma}{2}\|x_0 - x^*\|^2  - \tfrac{\gamma}{2}\|x_k -x^*\|^2 + \tsum_{t=1}^k \tfrac{l}{2}\|x_{t-1}-x^*\|^2\nn\\
& \leq \tfrac{\gamma}{2}\revision{D_X^2}{\|x_0 - x^*\|^2} + \tfrac{lk}{2}\revision{D_X^2}{D_{X, x_0}^2}.
\end{align}
We can further lower bound the LHS of the above inequality by 
\begin{align}\label{eq_5_0_0}
&\tsum_{t=1}^k \left[f(x_t) -f(x^*) + (\tfrac{\gamma}{2} - \tfrac{L}{2})\|x_{t-1} - x_t\|^2 \right]\nn\\
& \geq \tsum_{t=1}^k \left[f(x_t) -f(x_k) + (\tfrac{\gamma}{2} - \tfrac{L}{2})\|x_{t-1} - x_t\|^2 \right]\nn\\
& = \tsum_{t=1}^k \left[\tsum_{j=t}^{k-1}\big(f(x_j) -f(x_{j+1})\big) + (\tfrac{\gamma}{2} - \tfrac{L}{2})\|x_{t-1} - x_t\|^2 \right]\nn\\
& \overset{(i)}\geq \tsum_{t=1}^{k} \left[\tsum_{j=t}^{k-1}(\gamma - \tfrac{L}{2})\|x_j - x_{j+1}\|^2 + (\tfrac{\gamma}{2} - \tfrac{L}{2})\|x_{t-1} - x_t\|^2 \right]\nn\\
&  = \tsum_{t=1}^{k}\big((t-\tfrac{1}{2})\gamma - \tfrac{t}{2}L\big)\|x_{t-1} - x_t\|^2\nn\\
& = \tsum_{t=1}^{k}\big(\tfrac{2t-1}{2\gamma} - \tfrac{tL}{2\gamma^2}\big)\|g_{X, t-1}\|^2,
\end{align}
where $(i)$ follows from \eqref{eq_2_0}.
By combining Ineqs.~\eqref{eq_4_0} and \eqref{eq_5_0_0}, we obtain
\begin{align*}
\min_{0\leq t \leq k-1}\|g_{X,t}\|^2 &\leq \left(\tsum_{t=1}^{k} \big(\tfrac{2t-1}{2\gamma} - \tfrac{tL}{2\gamma^2}\big)\right)^{-1}\tsum_{t=1}^{k}\big(\tfrac{2t-1}{2\gamma} - \tfrac{tL}{2\gamma^2}\big)\|g_{X, t-1}\|^2\\
&\leq \left(\tsum_{t=1}^{k} \big(\tfrac{2t-1}{2\gamma} - \tfrac{tL}{2\gamma^2}\big)\right)^{-1}\left(\tfrac{\gamma}{2}\revision{D_X^2}{\|x_0 - x^*\|^2} + \tfrac{lk}{2}\revision{D_X^2}{D_{X, x_0}^2}\right),
\end{align*}
which completes the proof.
\end{proof}
\vgap

In view of Theorem~\ref{thm_0}, to find an $\epsilon$-stationary solution such that $\|g_{X, t}\| \leq \epsilon$, the number of PG iterations is bounded by
\begin{align}\label{complexity_thm_0}
    \mathcal{O}\left(\tfrac{L\revision{D_X}{\|x_0 - x^*\|^2}}{\epsilon} + \tfrac{Ll\revision{D_X^2}{D_{X, x_0}^2}}{\epsilon^2}\right).
\end{align}
This complexity bound unifies both convex and nonconvex optimization \revision{}{for the constant-stepsize PG method}. Specifically, if we set $l=0$, \revision{}{this bound is suboptimal among all first-order methods, but matches (within a constant factor) the complexity of the constant-stepsize PG method for minimizing the projected gradient of a smooth convex function; see \cite{teboulle2023elementary} and references therein. When $l>0$, the dominating term matches the best-known complexity bound achieved by accelerated methods \cite{ghadimi2016accelerated}.} To the best of our knowledge, this is the first analysis in the literature that achieves \revision{}{this unified complexity bound \eqref{complexity_thm_0}} through a gradient descent type method, although an early observation was made by \cite{LanMonteiro2018}. \revision{}{Moreover, the non-ascent property of constant-stepsize PG allows us to achieve the complexity bound \eqref{complexity_thm_0} that depends on $D_{X, x_0}$, which is stronger than depending on the diameter constant $D_X$. Such dependence on the initialization $x_0$ enables the algorithm to benefit from the restarting scheme when the problem exhibits local growth or error-bound structures.} On a separate note, when $l>0$, the best-known complexity for nonconvex optimization achieved by \cite{LanOuyangZhang2023} is $\mathcal{O}(\tfrac{\sqrt{Ll}(f(x_0) - f^*)}{\epsilon^2}) \leq \mathcal{O}(\tfrac{\sqrt{l}L^{3/2}\revision{D_X^2}{D_{X, x_0}^2}}{\epsilon^2})$, which does not necessarily outperform the leading term $\mathcal{O}(\tfrac{L l \revision{D_X^2}{D_{X, x_0}^2}}{\epsilon^2})$ in \eqref{complexity_thm_0}.

\begin{remark}
The guarantees in Theorem~\ref{thm_0} and all the remaining sections of this paper naturally extend to the problem of minimizing a composite function
\begin{align}\label{def_composite_problem}
\Psi^* := \min_{x \in X} \{\Psi(x):= f(x) +h(x) \},
\end{align}
where $h: X \rightarrow \bbr$ is a ``prox-friendly'' closed convex function such that \revision{}{the} ``proximal-mapping'' subproblem
\begin{align*} 
x^+ = \arg \min_{u \in X}\left\{\langle g, u\rangle + h(x) + \tfrac{\gamma}{2}\|x-u\|^2\right\}
\end{align*}
can be efficiently solved. Specifically, we define $P_X(x, g, \gamma) := \gamma(x-x^+)$ and slightly modify step \eqref{prox-mapping_0} to
\begin{align*}
  x_t = \arg \min_{x\in X} \left\{\langle \nabla f(x_{t-1}) , x \rangle + h(x) + \tfrac{\gamma_t}{2}\|x_{t-1} - x\|^2 \right\}. 
  \end{align*}
Then by utilizing the convexity of $h$, we can show that Theorem~\ref{thm_0} also holds for the composite problem \eqref{def_composite_problem}, with the composite projected gradient $P_X(x, \nabla f(x), \gamma)$ serving as the termination criterion.  \revision{}{Here, we require $\bar X:=\{x \in X| h(x) < +\infty\}$ to be a closed convex set, and $\mathcal{E}_{\bar X}(x_0):= \{x \in \bar X| \Psi(x) \leq \Psi(x_0)\}$ to be bounded.} \revision{}{The detailed derivation can be found in Appendix~\ref{appendix_prox_extension}.} Moreover, all the following results in this paper can be generalized similarly, \revision{}{but these extensions require the feasible set of the composite problem $\bar X$ to be a compact set. How to extend the unified complexity bounds, e.g., Theorem~\ref{thm_0}, to the general composite setting with an arbitrary closed convex feasible set like $\mathbb{R}^n$ remains open. 
}
\end{remark}

\revision{}{
\begin{remark}
The unified analysis in Theorem~\ref{thm_0} can also be generalized to the setting where $f$ can be potentially weakly convex, convex, or strongly convex, i.e., for $l \in [-L, L]$,
\begin{align}\label{general_convexity_0}
f(x)- f(y) - \langle \nabla f(y), x - y\rangle \geq - \tfrac{l}{2}\|x-y\|^2, \quad \forall x, y \in X.
\end{align}
In Appendix \ref{subsection_strongly_convex_extension}, we provide unified convergence guarantees for PG (Algorithm \ref{alg_1_0}) in terms of the norm of the projected gradient mapping, which covers the strongly convex setting ($l < 0$). However, when using the auto-conditioned stepsizes, our unified guarantees presented in the following sections do not extend straightforwardly to the strongly convex setting.
\end{remark}}

\subsection{Auto-conditioned projected gradient (AC-PG) method}\label{subsec_deterministic_2}
In this subsection, we consider a parameter-free variant of the PG method, which does not require any prior knowledge of the upper curvature $L$ or lower curvature $l$. Meanwhile, the algorithm is also free of any line search or backtracking procedures.
\begin{algorithm}[H]\caption{Auto-conditioned projected gradient (AC-PG) method}\label{alg_1}
	\begin{algorithmic}
		\State{\textbf{Input}: initial point $x_0 \in X$ and $L_0$ such that 
  and $0 < L_0 \leq L$. 
  }
		\For{$t=1,\cdots, k$}
		\State{
  \begin{align}
  x_t &= \arg \min_{x\in X} \left\{\langle \nabla f(x_{t-1}) , x \rangle + \tfrac{\gamma_t}{2}\|x_{t-1} - x\|^2 \right\}, \label{prox-mapping}\\
  &\text{where }\gamma_t = \hat L_{t-1}:=  \max\{L_0,..., L_{t-1}\}, \label{evaluate_L_t_hat}\\
  L_t &= \tfrac{2(f(x_{t}) - f(x_{t-1}) - \langle \nabla f(x_{t-1}), x_t - x_{t-1}\rangle)}{\|x_t - x_{t-1}\|^2}. \label{evaluate_L_t}
  \end{align}
  }
		\EndFor
	\end{algorithmic}
\end{algorithm}

In Algorithm~\ref{alg_1}, the quantity $L_0$ can be determined by simply choosing two points in $X$ and computing a local smoothness factor using \eqref{evaluate_L_t}. In the following iterations, we calculate in \eqref{evaluate_L_t} the local smoothness estimator $L_t$ whenever the search point $x_t$ becomes available, and the new stepsize will be determined purely by the past local smoothness estimations. \revision{}{Here, computing the local smoothness estimator $L_t$ requires both function value and gradient information, which may introduce additional computation. However, in most cases, estimating the function value is no more expensive than estimating the gradient\footnote{Similar arguments also apply in the stochastic setting.}. Therefore, our scheme does not significantly increase either the per-iteration cost or the total amount of elementary computations. On the other hand, recall that we adopt the convention $\tfrac{0}{0} = 0$, so $L_t = 0$ when $x_t = x_{t-1}$. In numerical implementations, when $x_t$ is very close to $x_{t-1}$, we may add a small constant, e.g., $10^{-10}$, to the denominator of \eqref{evaluate_L_t} for numerical stability, which is adopted in Section~\ref{sec_numerics}. }

It is important to note that by only using past local Lipschitz estimators, we may underestimate the local Lipschitz constant, especially when this constant increases sharply at the current search point. Consequently, the AC-PG method cannot always ensure a decrease in the function value. However, it is worth highlighting that the function value only increases in a limited number of iterations, and the total magnitude of these increases can be appropriately bounded, as they form a summable sequence.
To this end, we partition the iterations $\{x_0, x_1, x_2, ..., x_k\}$ into distinct segments, initiating a new segment whenever 
there exists a sudden increase in $L_t$
such that $L_t > \tfrac{3}{2}\hat L_{t-1}$.
Specifically, the iterates $\{x_0, x_1, x_2, ..., x_k\}$  are divided into $m$ segments, each with a length of $s_i$, $i=1, \ldots, m$:
\begin{align}\label{def_seg}
\{x_0, x_1, x_2, ..., x_k\} = \big\{\underbrace{x_0^{(1)}, x_1^{(1)}, ..., x_{s_1}^{(1)}}_{\text{segment 1}}, \underbrace{x_1^{(2)}, ..., x_{s_2}^{(2)}}_{\text{segment 2}}, ..., \underbrace{x_1^{(m)}, ..., x_{s_m}^{(m)}}_{\text{segment $m$}}\big\},
\end{align}
where $x_t^{(i)}$ represents the $t$-th iterate ($t=\revision{0}{1}, \ldots, s_i$)
in the $i$-th segment ($i=1, \ldots, m$). Similarly, $\hat L_{t}^{(i)}$ and $L_{t}^{(i)}$ denote the largest and local Lipschitz constants, respectively, at $t$-th iteration of the $i$-th segment. These constants are defined in the same way as  $\hat L_{t}$ and $L_{t}$ in \eqref{evaluate_L_t_hat} and \eqref{evaluate_L_t}, but with this updated indexing. \revision{}{We also apply the same indexing rule to define $\gamma_{t}^{(i)}$, referring to the stepsize at $t$-th iteration of the $i$-th segment.}
By construction, the following properties hold:
\begin{align}
L_1^{(i)} &> \tfrac{3}{2} \hat L_{s_{i-1}}^{(i-1)}, ~\text{for }i \geq 2 \label{eq_5_0}\\
L_t^{(i)} &\leq \tfrac{3}{2} \hat L_{t-1}^{(i)}, ~\text{for } 2\le t \le s_i,~ i\geq 1.\label{eq_5}
\end{align}
 \revision{For simplicity}{For notational simplicity, we denote $x_0^{(1)}=x_0$ and $x_0^{(i)} = x_{s_{i-1}}^{(i-1)}$ for $i\geq 2$. }
It should be noted that, since $L_t \leq L$ for all $t \geq 0$, the total number of segments $m$ can be bounded by
\begin{align}\label{eq_6}
    m \leq \lfloor\log_{\frac{3}{2}}\tfrac{L}{L_0}\rfloor + 1.
\end{align}
Regarding the lower curvature, we define  
\begin{align}\label{def_l_t}
l_t := \tfrac{2(f(x_{t}) + \langle \nabla f(x_{t}), x^* - x_{t}\rangle -f(x^*))}{\|x_t - x^*\|^2}
\end{align}
such that $l_t \leq l$.
It is worth noting that $l_t$ does not need to be explicitly computed during the algorithm's execution and is only utilized in the analysis. 

\vgap

The following theorem establishes the convergence results for Algorithm~\ref{alg_1}.

\begin{theorem}\label{thm_1}
Let $\{x_t\}$ be generated by Algorithm~\ref{alg_1}, then we have
\begin{align}\label{eq_thm_1}
\min_{t \in [k]}\|g_{X, t}\|^2 \leq \tfrac{48\hat L_{k-1}^2 D_X^2}{(k-2m)^2} + \tfrac{12\hat L_{k-1}D_X^2 \cdot (\sum_{t=0}^{k-1}\max\{0, l_t\})}{(k-2m)^2},
\end{align}
where $\hat L_t$ is defined in \eqref{evaluate_L_t_hat} and  $m \leq \lfloor\log_{\frac{3}{2}}\tfrac{L}{L_0}\rfloor + 1$.
\end{theorem}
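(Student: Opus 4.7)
My plan is to adapt the analysis of Theorem~\ref{thm_0} to the adaptive setting where the stepsize $\gamma_t = \hat L_{t-1}$ may underestimate the local smoothness $L_t$. The key identity is that, by the very definition \eqref{evaluate_L_t} of $L_t$, we have the exact expansion $f(x_t) - f(x_{t-1}) - \langle \nabla f(x_{t-1}), x_t - x_{t-1}\rangle = \tfrac{L_t}{2}\|x_t - x_{t-1}\|^2$, not merely an upper bound as in the smoothness inequality \eqref{smoothness_2}. Combining this with the three-point inequality from the optimality of \eqref{prox-mapping} evaluated at $x = x^*$, and using the exact identity $f(x_{t-1}) + \langle \nabla f(x_{t-1}), x^* - x_{t-1}\rangle - f(x^*) = \tfrac{l_{t-1}}{2}\|x_{t-1} - x^*\|^2$ from \eqref{def_l_t}, I obtain the analog of \eqref{eq_3_0} with $L$ replaced by $L_t$ and $l$ replaced by $l_{t-1}$ (which I upper bound by $\max\{0, l_{t-1}\}$ on the right).

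Next, I will process the iterates segment by segment using the partition \eqref{def_seg}. Within segment $i$, the stepsize $\gamma_t^{(i)} = \hat L_{t-1}^{(i)}$ is non-decreasing in $t$, so summing the per-iteration inequality for $t = 1, \ldots, s_i$ allows the terms $\tfrac{\gamma_t}{2}\|x_{t-1}-x^*\|^2 - \tfrac{\gamma_t}{2}\|x_t-x^*\|^2$ to telescope up to a residual bounded by $(\gamma_{s_i}^{(i)} - \gamma_1^{(i)}) D_X^2 / 2 \leq \tfrac{\hat L_{k-1}}{2} D_X^2$, yielding a clean per-segment upper bound. To lower bound the left-hand side in terms of $\|g_{X,t}\|^2$, I will reuse the chain-of-descents argument from \eqref{eq_5_0_0}. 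Within a segment, property \eqref{eq_5} gives $L_t \leq \tfrac{3}{2}\gamma_t$ for $t \geq 2$, so $\gamma_t - L_t/2 \geq \gamma_t/4$, which both guarantees the per-step descent $f(x_{t-1}) - f(x_t) \geq (\gamma_t/4)\|x_{t-1}-x_t\|^2$ and lets me express $\|x_{t-1}-x_t\|^2 = \|g_{X,t-1}\|^2/\gamma_t^2 \geq \|g_{X,t-1}\|^2/\hat L_{k-1}^2$.

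The only iterates where descent may fail are the first iterate $x_1^{(i)}$ of each segment $i \geq 2$, where the triggering condition \eqref{eq_5_0} yields $L_1^{(i)} > \tfrac{3}{2}\gamma_1^{(i)}$ and the function value can increase; since there are at most $m - 1 \leq \lfloor\log_{3/2}(L/L_0)\rfloor$ such transition steps, the total count of ``good'' iterations used in the chain-of-descents telescoping is at least $k - 2m$ (one for each transition, plus a bookkeeping loss of one per segment when concatenating partial chains across segment boundaries). Carrying the bound $\gamma_t \leq \hat L_{k-1}$ throughout and summing over all segments yields the quadratic-in-$(k-2m)$ denominator together with the aggregate weak-convexity contribution $D_X^2 \sum_{t=0}^{k-1}\max\{0,l_t\}$, matching \eqref{eq_thm_1} up to the constants $48$ and $12$.

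The main technical obstacle will be the concatenation across segment transitions: since $f(x_1^{(i)})$ may exceed $f(x_{s_{i-1}}^{(i-1)})$, the global sequence $\{f(x_t)\}$ is not monotone, so I cannot apply the chain-of-descents argument globally. I expect to resolve this by applying the argument \emph{within} each segment and then aggregating, using the fact that within each segment the function values telescope cleanly and the increase at transition boundaries can be absorbed into the additive $2m$ correction in the iteration count. Verifying that the summation of the $\tfrac{\gamma_t}{2}\|x_{t-1}-x^*\|^2 - \tfrac{\gamma_t}{2}\|x_t-x^*\|^2$ terms across all segments remains bounded by $\hat L_{k-1} D_X^2$ (rather than accumulating with each segment) will also require care, since each segment introduces a fresh boundary term, but the monotonicity of $\hat L_t$ across the entire run provides the required bound.
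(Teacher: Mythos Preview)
Your high-level strategy matches the paper's: derive the analogue of \eqref{eq_3_0} with $L_t$ and $l_{t-1}$ in place of $L$ and $l$, telescope the right-hand side globally using the monotonicity of $\gamma_t = \hat L_{t-1}$ (this is exactly \eqref{eq_4} in the paper), and lower-bound the left-hand side segment by segment via the chain-of-descents argument together with $L_t^{(i)} \le \tfrac{3}{2}\gamma_t^{(i)}$ for $t\ge 2$.

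There is, however, a genuine gap in your aggregation step. After the per-segment chain-of-descents you obtain, for each segment $i$, a lower bound of the form $\tfrac{(s_i-1)(s_i-2)}{8\gamma_{s_i}^{(i)}}\min_t\|g_{X,t}\|^2$ minus a ``bad'' boundary term. Your claim that ``carrying the bound $\gamma_t \le \hat L_{k-1}$ throughout and summing over all segments yields the quadratic-in-$(k-2m)$ denominator'' is not correct: replacing every $\gamma_{s_i}^{(i)}$ by $\hat L_{k-1}$ gives $\tfrac{1}{8\hat L_{k-1}}\sum_i (s_i-1)(s_i-2)$, and $\sum_i (s_i-2)^2$ is in general only $\ge (k-2m)^2/m$, not $(k-2m)^2$. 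The paper closes this gap by \emph{keeping} the segment-dependent denominators $\gamma_{s_i}^{(i)}$ and applying the Cauchy--Schwarz (Titu) inequality $\sum_i \tfrac{a_i^2}{b_i} \ge \tfrac{(\sum_i a_i)^2}{\sum_i b_i}$ with $a_i=\sqrt{(s_i-1)(s_i-2)}$, $b_i=\gamma_{s_i}^{(i)}$, and then exploiting the geometric growth $\gamma_{s_{i-1}}^{(i-1)} < \tfrac{2}{3}\gamma_{s_i}^{(i)}$ to bound $\sum_i \gamma_{s_i}^{(i)} \le 3\hat L_{k-1}$. This is the step that removes the extra factor of $m$ and produces $(k-2m)^2/(24\hat L_{k-1})$.

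Relatedly, the ``bad'' transition terms $\tfrac{1}{2}(\gamma_1^{(i)}-L_1^{(i)})\|x_1^{(i)}-x_0^{(i)}\|^2$ at the start of each segment are not absorbed into the iteration count as you suggest; they are genuinely negative additive contributions bounded by $-\tfrac{L_1^{(i)}}{2}D_X^2$, and the paper controls their sum via the same geometric-series argument, $\sum_i L_1^{(i)} \le 3 L_1^{(m)} \le 3\hat L_{k-1}$, yielding the extra $\tfrac{3\hat L_{k-1}}{2}D_X^2$ on the right-hand side. The $2m$ loss in the iteration count arises separately, from the fact that in each segment the coefficients $(\tfrac{t}{4}-\tfrac{1}{2})\gamma_t^{(i)}$ are nonpositive for $t\in\{1,2\}$.
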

\begin{proof}
By the optimality condition of \eqref{prox-mapping}, we have the 
same three-point inequality as in \eqref{eq_1_0_0}, i.e.,
\begin{align*}
&f(x_{t-1}) + \langle \nabla f(x_{t-1}), x_t - x_{t-1}\rangle + \tfrac{\gamma_t}{2}\|x_{t-1} - x_t\|^2 + \tfrac{\gamma_t}{2}\|x_t - x\|^2\nn\\
&\leq f(x_{t-1}) +\langle \nabla f(x_{t-1}), x - x_{t-1}\rangle + \tfrac{\gamma_t}{2}\|x_{t-1} - x\|^2.
\end{align*}
By using the definition of \revision{$L_t$}{ $L_t$, we have $f(x_{t-1}) + \langle \nabla f(x_{t-1}), x_t - x_{t-1}\rangle = f(x_t) - \tfrac{L_t}{2}\|x_{t-1} - x_t\|^2$, thus}
\begin{align}\label{eq_1}
&f(x_t) + (\tfrac{\gamma_t}{2} - \tfrac{L_t}{2})\|x_{t-1} - x_t\|^2 + \tfrac{\gamma_t}{2}\|x_t - x\|^2\nn\\
&\leq f(x_{t-1}) +\langle \nabla f(x_{t-1}), x - x_{t-1}\rangle + \tfrac{\gamma_t}{2}\|x_{t-1} - x\|^2.
\end{align}
Taking $x = x_{t-1}$ in the above inequality provides us with
\begin{align}\label{eq_2}
(\gamma_t -\tfrac{L_t}{2})\|x_t - x_{t-1}\|^2 \leq f(x_{t-1}) - f(x_t).
\end{align}
Clearly, when $\gamma_t = \hat L_{t-1} \leq \tfrac{L_t}{2}$, we may not have the descent property of the function values. 
On the other hand, setting $x = x^*$ in Ineq.~\eqref{eq_1} and using the definition of $l_t$ in \eqref{def_l_t}, we obtain
\begin{align}\label{eq_3}
&f(x_t) + (\tfrac{\gamma_t}{2} - \tfrac{L_t}{2})\|x_{t-1} - x_t\|^2 + \tfrac{\gamma_t}{2}\|x_t - x^*\|^2\nn\\
&\leq f(x^*) + \tfrac{l_{t-1}}{2}\|x_{t-1} - x^*\|^2 + \tfrac{\gamma_t}{2}\|x_{t-1} - x^*\|^2.
\end{align}
By taking a telescope sum of the above inequality from $t=1$ to $k$, we have
\begin{align}\label{eq_4}
&\tsum_{t=1}^k \left[f(x_t) -f(x^*) + (\tfrac{\gamma_t}{2} - \tfrac{L_t}{2})\|x_{t-1} - x_t\|^2 \right]\nn\\
& \leq \tfrac{\gamma_1}{2}\|x_0 - x^*\|^2 + \tsum_{t=1}^{k-1} (\tfrac{\gamma_{t+1}}{2} - \tfrac{\gamma_t}{2})\|x_t - x^*\|^2 - \tfrac{\gamma_k}{2}\|x_k -x^*\|^2 + \tsum_{t=1}^k \tfrac{l_{t-1}}{2}\|x_{t-1}-x^*\|^2\nn\\
& \overset{(i)}\leq \tfrac{\gamma_k}{2}D_X^2 + \tfrac{\sum_{t=0}^{k-1}\max\{0, l_t\}}{2}D_X^2 \overset{(ii)}= \tfrac{\hat L_{k-1}}{2}D_X^2 + \tfrac{\sum_{t=0}^{k-1}\max\{0, l_t\}}{2}D_X^2,
\end{align}
where step (i) follows from $\gamma_{t+1} \geq \gamma_t$ 
and the definition of $D_X$, and step (ii) uses $\gamma_k = \hat L_{k-1}$ as defined in~\eqref{prox-mapping}.

Next, we lower bound the LHS of Ineq.~\eqref{eq_4} by using the segments defined in \eqref{def_seg}-\eqref{eq_5}.
For convenience, let us denote $S_i:= \tsum_{t=1}^{s_i}\left[f(x^{(i)}_t) -f(x^*) + (\tfrac{\gamma_t^{(i)}}{2} - \tfrac{L^{(i)}_t}{2})\|x^{(i)}_{t-1} - x^{(i)}_t\|^2 \right]$. Then for any $i \in [m]$, 
\begin{align*}
S_i &\geq \tsum_{t=1}^{s_i}\left[f(x^{(i)}_t) -f(x_{s_i}^{(i)}) + (\tfrac{\gamma^{(i)}_t}{2} - \tfrac{L^{(i)}_t}{2})\|x^{(i)}_{t-1} - x^{(i)}_t\|^2 \right]\nn\\
& = \tsum_{t=1}^{s_i} \left[\tsum_{j=t}^{s_i-1}\big(f(x^{(i)}_j) - f(x^{(i)}_{j+1}) \big) + (\tfrac{\gamma^{(i)}_t}{2} - \tfrac{L^{(i)}_t}{2})\|x^{(i)}_{t-1} - x^{(i)}_t\|^2 \right]\nn\\
& \overset{(i)}\geq \tsum_{t=1}^{s_i} \left[\tsum_{j=t}^{s_i-1}(\gamma_{j+1}^{(i)} - \tfrac{L_{j+1}^{(i)}}{2})\|x^{(i)}_j - x_{j+1}^{(i)}\|^2 + (\tfrac{\gamma^{(i)}_t}{2} - \tfrac{L^{(i)}_t}{2})\|x^{(i)}_{t-1} - x^{(i)}_t\|^2 \right]\nn\\
& \overset{(ii)} = \tsum_{t=2}^{s_i}[(t-\tfrac{1}{2})\gamma_t^{(i)} - \tfrac{t}{2}L_t^{(i)}]\|x^{(i)}_{t-1} - x^{(i)}_t\|^2 + \tfrac{1}{2}(\gamma_1^{(i)}-L_1^{(i)})\|x_1^{(i)} - x_0^{(i)}\|^2,
\end{align*}
where step (i) follows from Ineq.~\eqref{eq_2}, and step (ii) is obtained by rearranging the terms. Recalling that $\gamma_t^{(i)} = \hat L^{(i)}_{t-1}$ and Ineq.~\eqref{eq_5}, we have $L_t^{(i)} \leq \tfrac{3}{2}\gamma_t^{(i)}$ for $t\geq 2$. Then we can further bound the above inequality by 
\begin{align}\label{eq_7}
S_i &\geq \tsum_{t=2}^{s_i}[(t-\tfrac{1}{2})\gamma_t^{(i)} - \tfrac{3t}{4}\gamma_t^{(i)}] \|x^{(i)}_{t-1} - x^{(i)}_t\|^2+ \tfrac{1}{2}(\gamma_1^{(i)}-L_1^{(i)})\|x_1^{(i)} - x_0^{(i)}\|^2\nn\\
& = \tsum_{t=2}^{s_i}(\tfrac{t}{4}-\tfrac{1}{2})\gamma_t^{(i)} \|x^{(i)}_{t-1} - x^{(i)}_t\|^2+ \tfrac{1}{2}(\gamma_1^{(i)}-L_1^{(i)})\|x_1^{(i)} - x_0^{(i)}\|^2\nn\\
& \overset{(i)}\geq  \tsum_{t=2}^{s_i}(\tfrac{t}{4}-\tfrac{1}{2}) \tfrac{1}{\gamma_t^{(i)}}\|g_{X, t-1}^{(i)}\|^2 -\tfrac{L_1^{(i)}}{2}\|x_1^{(i)} - x_0^{(i)}\|^2\nn\\
& \overset{(ii)} \geq \tfrac{(s_i - 1)(s_i-2)}{8\gamma_{s_i}^{(i)}} \min_{t \in [1, s_i-1]}\|g_{X, t}^{(i)}\|^2-\tfrac{L_1^{(i)}}{2}\|x_1^{(i)} - x_0^{(i)}\|^2,
\end{align}
where step (i) follows from the definition of projected gradient in \eqref{def_projected_gradient} and the fact $\gamma_1^{(i)} \geq 0$, and step (ii) is due to $\gamma_{s_i}^{(i)} \geq \gamma_{s_i-1}^{(i)}\geq ...\geq \gamma_{2}^{(i)}$. Then by summing up $S_i$ for $i=1,...,m$, we obtain
\begin{align*}
\text{LHS of \eqref{eq_4}} & = \tsum_{i=1}^m S_i\\
& \overset{(i)}\geq \tsum_{i=1}^m \left(\tfrac{(s_i - 1)(s_i-2)}{8\gamma_{s_i}^{(i)}} \min_{t \in [1, s_i-1]}\|g_{X, t}^{(i)}\|^2-\tfrac{L_1^{(i)}}{2}\|x_1^{(i)} - x_0^{(i)}\|^2 \right)\\
&\overset{(ii)}\geq \tfrac{(\sum_{i=1}^m\sqrt{(s_i-1)(s_i-2)})^2}{8(\sum_{i=1}^m \gamma_{s_i}^{(i)})}\cdot \min_{t \in [1, k]}\|g_{X, t}\|^2 - \tsum_{i=1}^m \tfrac{L_1^{(i)}}{2}\|x_1^{(i)} - x_0^{(i)}\|^2\\
& \overset{(iii)} \geq \tfrac{(k-2m)^2}{24 \gamma_{s_m}^{(m)}}\min_{t \in [1, k]}\|g_{X, t}\|^2 - \tfrac{3L_1^{(m)}}{2}D_X^2\\
& \overset{(iv)}\geq \tfrac{(k-2m)^2}{24 \hat L_{k-1}}\min_{t \in [1, k]}\|g_{X, t}\|^2 - \tfrac{3\hat L_{k-1}}{2}D_X^2,
\end{align*}
where step (i) utilizes Ineq.~\eqref{eq_7}, step (ii) is due to the Cauchy-Schwarz inequality, step (iii) stems from the facts that $\tsum_{i=1}^m L_1^{(i)} \le 3 L_1^{(m)}$ due to $L_1^{(i-1)} < \tfrac{2}{3}L_1^{(i)}$,
$\sum_{i=1}^m \gamma_{s_i}^{(i)} \le 3 \gamma_{s_m}^{(m)}$
due to $\gamma_{s_{i-1}}^{(i-1)} \leq \gamma_1^{(i)}<\tfrac{2}{3}L_1^{(i)} \leq \tfrac{2}{3}\gamma_{s_i}^{(i)}$ when $s_i\geq 2$, and $\sum_{i=1}^m\sqrt{(s_i-1)(s_i-2)}
\ge \sum_{i=1}^m (s_i-2) \ge \sum_{i=1}^m s_i - 2m = k - 2m$,
and step (iv) follows from the definition of the segments and the definition of $\hat L_t$. By combining the above inequality with Ineq.~\eqref{eq_4}, we obtain  
\begin{align*}
\tfrac{(k-2m)^2}{24 \hat L_{k-1}}\min_{t \in [1, k]}\|g_{X, t}\|^2 \leq 2 \hat L_{k-1} D_X^2 + \tfrac{\sum_{t=0}^{k-1}\max\{0, l_t\}}{2}D_X^2,
\end{align*}
which implies that
\begin{align*}
\min_{t \in [1, k]}\|g_{X, t}\|^2 \leq \tfrac{48\hat L_{k-1}^2 D_X^2}{(k-2m)^2} + \tfrac{12\hat L_{k-1}D_X^2 \cdot (\sum_{t=0}^{k-1}\max\{0, l_t\})}{(k-2m)^2},
\end{align*}
and we have thus completed the proof.
\end{proof}

\vgap

Theorem~\ref{thm_1} enables us to establish the complexity bounds for the AC-PG algorithm (Algorithm~\ref{alg_1}). Recall that $m$, the number of times where $L_t > \frac{3}{2} \hat{L}_{t-1}$, is upper bounded by $\mathcal{O}(\log L/L_0)$. Based on the bound in \eqref{eq_thm_1}, the number of AC-PG iterations required to find a solution $x_t \in X$ such that the projected gradient satisfies $\|g_{X, t}\|\leq \epsilon$ is given by:  
\begin{align}\label{complexity_thm_1}
\mathcal{O}\left(\tfrac{LD_X}{\epsilon} + \tfrac{L l D_X^2}{\epsilon^2} + \log \tfrac{L}{L_0}\right),
\end{align}
which aligns with the complexity of PG in \eqref{complexity_thm_0}, up to an additive constant. \revision{}{The proof of Theorem~\ref{thm_1} suggests that we achieve this complexity bound by strategically eliminating the first iteration of each segment in \eqref{def_seg}, or equivalently, the one where the local Lipschitz estimator increases sharply by a factor of more than $3/2$.} \revision{}{Notice that our guarantees on AC-PG do not achieve dependence on the stronger diameter constant $D_{X, x_0}$ as in \eqref{complexity_thm_0}, due to the loss of non-ascent property when using auto-conditioned stepsizes.}


\begin{remark}
Algorithm~\ref{alg_1} employs a non-increasing stepsize policy, which may be conservative for certain problems. However, the algorithm can be readily adapted to enhance convergence efficiency. For example, incorporating line search steps or periodically resetting $L_t = L_0$ (e.g., when a new segment is formed) can improve performance without substantially impacting the theoretical complexity bounds.
\end{remark}

\section{Stochastic projected gradient methods for nonconvex stochastic optimization}\label{sec_stochastic}
In this section, we consider the stochastic setting where only stochastic first-order information about $f$ is available.  
In particular, we assume access to a stochastic oracle that provides the stochastic gradient $\cG(x, \xi)$ at the query point $x\in X$, where $\xi \in \Xi$ denotes a random variable with probability distribution supported on the set $\Xi$. 
The $\xi$'s generated by the stochastic oracle are assumed to be independent and identically distributed. We further impose the following two assumptions:
\begin{assumption}[Unbiasness]\label{assump_unbiasness} For any $x \in X$,
\begin{align}
\bbe_\xi [\cG(x, \xi)] = \nabla f(x).\label{unbiasness_G}
\end{align}
\end{assumption}
\begin{assumption}[Bounded variance]\label{assump_variance}  There exists 
$\sigma^2 > 0$ such that for any $x\in X$,
\begin{align}\label{uniform_variance}
\bbe_\xi\|\cG(x, \xi) - \nabla f(x) \|^2 \leq \sigma^2.
\end{align}
\end{assumption}
We consider the mini-batch approach, which is widely used in practice. Specifically, at iteration $t$, we define
\begin{align*}
\bar G_t(x) = \tfrac{1}{b_t} \tsum_{j=1}^{b_t} \cG(x, \xi^t_j), \
\end{align*}
where $\{\xi^t_j\}_{j=1}^{b_t}$ are mutually independent random variables. Then, we have $\bbe[\bar G_t(x)] = \nabla f(x)$.
Furthermore, it follows from Assumption~\ref{assump_variance} that 
\begin{align*}
\bbe[\|\bar G_t(x) - \nabla f(x)\|^2] \leq \tfrac{\sigma^2}{b_t}.
\end{align*}

In analogy to Section~\ref{sec_deterministic}, we first present a novel analysis of the stochastic projected gradient (SPG) method in Subsection~\ref{sub_sec_stoch_1} and then introduce an auto-conditioned variant of SPG with adaptive stepsizes in Subsection~\ref{sec_stoch_expectation}. Moreover, in Subsection~\ref{sec_stoch_deviation}, we extend the expectation guarantees of Subsection~\ref{sec_stoch_expectation} to high-probability guarantees by employing a two-phase approach. 

\subsection{Stochastic projected gradient (SPG) method}\label{sub_sec_stoch_1}
In this subsection, we consider the following stochastic variant of the PG method (Algorithm~\ref{alg_1_0}) equipped with mini-batches.
\begin{algorithm}[H]\caption{Stochastic projected gradient (SPG) method}\label{alg_2_0}
	\begin{algorithmic}
		\State{\textbf{Input}: initial point $x_0 \in X$, stepsize parameter $\{\gamma_t\}$, and batch size parameter $\{b_t\}$. 
  }
		\For{$t=1,\cdots, k$}
		\State{
  \begin{align}
  x_t = \arg \min_{x\in X} \left\{\langle  \hat G_t(x_{t-1}) , x \rangle + \tfrac{\gamma_t}{2}\|x_{t-1} - x\|^2 \right\}, \label{prox-mapping_2_0}
  \end{align}
  where 
  \begin{align}
  \hat G_t(x) &:= \tfrac{1}{b_t}\tsum_{j=1}^{b_t} \cG(x, \xi_j^t).\label{def_hat_g_2_0}
  \end{align}
  }
\EndFor
	\end{algorithmic}
\end{algorithm}

For convenience, we define the projected stochastic gradient at iterate $x_t$ as
\begin{align}\label{def_projected_stochastic_gradient}
\hat g_{X, t}:= P_X(x_t, \hat G_{t+1}(x_t), \gamma_{t+1}) = \gamma_{t+1}(x_{t} - x_{t+1}),
\end{align}
and recall the definition of the projected gradient (same as \eqref{def_projected_gradient} in Section~\ref{sec_deterministic}):
\begin{align}\label{def_projected_deterministic_gradient}
g_{X, t}:= P_X(x_t, \nabla f(x_t), \gamma_{t+1}).
\end{align}
 The convergence guarantees of SPG are summarized in the following theorem.

\begin{theorem}\label{thm_2_0}
    Assume Assumptions \ref{assump_unbiasness}-\ref{assump_variance} hold. Let $\{x_t\}$ be generated by Algorithm~\ref{alg_2_0} with $\gamma_t = \gamma > L$. For any $k\geq 2$, let the output $x_{R(k)}$ be randomly chosen from $\{x_1, ..., x_{k-1}\}$ with the following probability mass function:
\begin{align}\label{random_output_0}
\mathbb{P}(x_{R(k)} = x_{t-1}) = \left(\tsum_{j=2}^k W(j)\right)^{-1} \cdot W(t),
\end{align}
where 
$
W(t) := \tfrac{3t-2}{8\gamma}-\tfrac{tL}{4\gamma^2}$. Then we have
    \begin{align}
  &\bbe\left[\|g_{X, R(k)}\|^2 \right]\leq \left(\tsum_{t=2}^k (\tfrac{3t-2}{8\gamma}-\tfrac{tL}{4\gamma^2})\right)^{-1}\left[ (\tfrac{\gamma}{2}+ \tfrac{L}{4} + \tfrac{lk}{2})D_X^2 +\tsum_{t=2}^k(\tfrac{7t-2}{4\gamma}-\tfrac{tL}{2\gamma^2})\tfrac{\sigma^2}{b_t} + \tfrac{\sigma^2}{\gamma b_1}\right].
\end{align}
Specifically, if we take $\gamma = 2L$ and $b_t = \max \left\{ 1, \min\big\{\lceil\tfrac{3t \sigma^2}{4Ll D_X^2}\rceil, \lceil \tfrac{3t k\sigma^2}{4L^2 D_X^2} \rceil \big\}\right\}$, then
\begin{align}\label{SPG_unfied_rate}
  &\bbe\left[\|g_{X, R(k)}\|^2 \right]\leq \tfrac{40L^2 D_X^2}{k (k-1)} + \tfrac{24 L l D_X^2}{k-1}.
\end{align}
\end{theorem}
\begin{proof}
First, the optimality condition of \eqref{prox-mapping_2_0} implies that for any $x \in X$,
\begin{align*}
\langle \hat G_t(x_{t-1}), x_t - x\rangle + \tfrac{\gamma_t}{2}\|x_t - x_{t-1}\|^2 + \tfrac{\gamma_t}{2}\|x_t - x\|^2 \leq \tfrac{\gamma_t}{2}\|x_{t-1} - x\|^2.
\end{align*}
Define $\hat \delta_t(x):=\hat G_t(x) - \nabla f(x)$. Adding $f(x_{t-1})$ on both sides of the above inequality and rearranging the terms yield
\begin{align*}
&f(x_{t-1}) + \langle \nabla f(x_{t-1}), x_t - x_{t-1} \rangle + \langle \hat \delta_t(x_{t-1}), x_t - x\rangle + \tfrac{\gamma_t}{2}\|x_t - x_{t-1}\|^2 + \tfrac{\gamma_t}{2}\|x_t - x\|^2\\
&\leq f(x_{t-1}) + \langle \nabla f(x_{t-1}), x_{t-1} - x \rangle + \tfrac{\gamma_t}{2}\|x_{t-1} - x\|^2. 
\end{align*}
By further using Young's inequality $\langle \hat \delta_t(x_{t-1}), x_t - x_{t-1}\rangle + \tfrac{\gamma_t}{4}\|x_t - x_{t-1}\|^2 \geq -\tfrac{\|\hat \delta_t(x_{t-1})\|^2}{\gamma_t}$, we have
\begin{align*}
&f(x_{t-1}) + \langle \nabla f(x_{t-1}), x_t - x_{t-1} \rangle + \langle \hat \delta_t(x_{t-1}), x_{t-1} - x\rangle - \tfrac{\|\hat \delta_t(x_{t-1})\|^2}{\gamma_t} + \tfrac{\gamma_t}{4}\|x_t - x_{t-1}\|^2 + \tfrac{\gamma_t}{2}\|x_t - x\|^2\\
&\leq f(x_{t-1}) + \langle \nabla f(x_{t-1}), x_{t-1} - x \rangle + \tfrac{\gamma_t}{2}\|x_{t-1} - x\|^2. 
\end{align*}
Next, we apply the smoothness condition of $f$ (Ineq.~\eqref{smoothness_2}) on the LHS of the above inequality, and obtain
\begin{align}\label{proof_stoch_1}
&f(x_{t}) + (\tfrac{\gamma_t}{4} - \tfrac{L}{2})\|x_t - x_{t-1}\|^2 + \langle \hat \delta_t(x_{t-1}), x_{t-1} - x\rangle - \tfrac{\|\hat \delta_t(x_{t-1})\|^2}{\gamma_t}  + \tfrac{\gamma_t}{2}\|x_t - x\|^2\nn\\
&\leq f(x_{t-1}) + \langle \nabla f(x_{t-1}), x_{t-1} - x \rangle + \tfrac{\gamma_t}{2}\|x_{t-1} - x\|^2. 
\end{align}
Taking $x = x_{t-1}$ in the above inequality and rearranging the terms, then we have
\begin{align}\label{proof_stoch_2}
f(x_{t-1}) - f(x_{t}) \geq (\tfrac{3\gamma_t}{4} - \tfrac{L}{2})\|x_t - x_{t-1}\|^2 - \tfrac{\|\hat \delta_t(x_{t-1})\|^2}{\gamma_t}. 
\end{align}
Setting $x = x^*$ in Ineq. \eqref{proof_stoch_1}, applying the weakly convex condition $f(x_{t-1}) + \langle \nabla f(x_{t-1}), x_{t-1} - x^* \rangle \leq f(x^*) + \tfrac{l}{2}\|x_{t-1} - x^*\|^2$, and rearranging the terms, we obtain
\begin{align}\label{proof_stoch_3}
&f(x_t) - f(x^*) + (\tfrac{\gamma_t}{4} - \tfrac{L}{2})\|x_t - x_{t-1}\|^2 - \tfrac{\|\hat \delta_t(x_{t-1})\|^2}{\gamma_t}\nn\\
& \leq \tfrac{l}{2}\|x_{t-1} - x^*\|^2 + \tfrac{\gamma_t}{2}\|x_{t-1} - x^*\|^2 - \tfrac{\gamma_t}{2}\|x_t - x^*\|^2 + \langle \hat \delta_t(x_{t-1}), x^* - x_{t-1}\rangle.
\end{align}
Then, by taking the telescope sum of the above inequality for $t=1,...,k$, and using $\gamma_t = \gamma$, we arrive at
\begin{align}\label{proof_stoch_4}
&\tsum_{t=1}^k \left[f(x_t) - f(x^*) + (\tfrac{\gamma}{4} - \tfrac{L}{2})\|x_t - x_{t-1}\|^2 - \tfrac{\|\hat \delta_t(x_{t-1})\|^2}{\gamma} \right]\nn\\
& \leq \tsum_{t=1}^k\tfrac{l}{2}\|x_{t-1} - x^*\|^2 + \tfrac{\gamma}{2}\|x_0 - x^*\|^2 - \tfrac{\gamma}{2}\|x_k - x^*\|^2 + \tsum_{t=1}^k\langle \hat \delta_t(x_{t-1}), x^* - x_{t-1}\rangle\nn\\
& \leq \tfrac{l k}{2}D_X^2 + \tfrac{\gamma}{2}D_X^2 + \tsum_{t=1}^k\langle \hat \delta_t(x_{t-1}), x^* - x_{t-1}\rangle.
\end{align}
Next, we further lower bound the LHS of the above inequality, i.e.,
\begin{align}\label{proof_stoch_5}
&\tsum_{t=1}^k \left[f(x_t) - f(x^*) + (\tfrac{\gamma}{4} - \tfrac{L}{2})\|x_t - x_{t-1}\|^2 - \tfrac{\|\hat \delta_t(x_{t-1})\|^2}{\gamma} \right]\nn\\
& \geq \tsum_{t=1}^k  \left[f(x_t) - f(x_k) + (\tfrac{\gamma}{4} - \tfrac{L}{2})\|x_t - x_{t-1}\|^2 - \tfrac{\|\hat \delta_t(x_{t-1})\|^2}{\gamma} \right]\nn\\
& = \tsum_{t=1}^k  \left[\tsum_{j=t}^{k-1} [f(x_j) - f(x_{j+1})] + (\tfrac{\gamma}{4} - \tfrac{L}{2})\|x_t - x_{t-1}\|^2 - \tfrac{\|\hat \delta_t(x_{t-1})\|^2}{\gamma} \right]\nn\\
& \overset{(i)}\geq \tsum_{t=1}^k\left[\tsum_{j=t}^{k-1} [(\tfrac{3\gamma}{4} - \tfrac{L}{2})\|x_{j} - x_{j+1}\|^2 - \tfrac{\|\hat \delta_{j+1}(x_{j})\|^2}{\gamma}] + (\tfrac{\gamma}{4} - \tfrac{L}{2})\|x_t - x_{t-1}\|^2 - \tfrac{\|\hat \delta_t(x_{t-1})\|^2}{\gamma} \right]\nn\\
& =  \tsum_{t=1}^k \left[(\tfrac{(3t-2)\gamma}{4}-\tfrac{tL}{2})\|x_t - x_{t-1}\|^2 - \tfrac{t \|\hat \delta_t(x_{t-1})\|^2}{\gamma} \right]\nn\\
& \overset{(ii)}\geq\tsum_{t=2}^k \left[(\tfrac{3t-2}{4\gamma}-\tfrac{tL}{2\gamma^2})\|\hat g_{X, t-1}\|^2\right] - \tfrac{L}{4}\|x_1-x_0\|^2 - \tsum_{t=1}^k \tfrac{t \|\hat \delta_t(x_{t-1})\|^2}{\gamma} \nn\\
& \overset{(iii)}\geq \tsum_{t=2}^k \left[(\tfrac{3t-2}{8\gamma}-\tfrac{tL}{4\gamma^2})\|g_{X, t-1}\|^2 -(\tfrac{3t-2}{4\gamma}-\tfrac{tL}{2\gamma^2})\|\hat \delta_t(x_{t-1})\|^2 \right]- \tfrac{L}{4}\|x_1-x_0\|^2 - \tsum_{t=1}^k \tfrac{t \|\hat \delta_t(x_{t-1})\|^2}{\gamma}.
\end{align}
Here, step (i) follows from Ineq.~\eqref{proof_stoch_2}, step (ii) uses the definition of $\hat g_{X, t}$ and $ \gamma > L$, and step (iii) is due to Young's inequality and the fact that $\|g_{X, t-1} - \hat g_{X, t-1}\|\leq \|\hat \delta_t(x_{t-1})\|$ (see Proposition 6.1 of \cite{LanBook2020}). By combining Ineqs.~\eqref{proof_stoch_4} and \eqref{proof_stoch_5}, and rearranging the terms, we have
\begin{align*}
  &\tsum_{t=2}^k\left[(\tfrac{3t-2}{8\gamma}-\tfrac{tL}{4\gamma^2})\|g_{X, t-1}\|^2 \right]\\
 &\leq (\tfrac{\gamma}{2}+ \tfrac{L}{4} + \tfrac{lk}{2})D_X^2 +\tsum_{t=2}^k(\tfrac{7t-2}{4\gamma}-\tfrac{tL}{2\gamma^2})\|\hat \delta_t(x_{t-1})\|^2 + \tfrac{\|\hat \delta_1(x_{0})\|^2}{\gamma} + \tsum_{t=1}^k\langle \hat \delta_t(x_{t-1}), x^* - x_{t-1}\rangle.
\end{align*}
Taking the expectation on both sides of the above inequality, we obtain
\begin{align*}
  &\tsum_{t=2}^k\bbe\left[(\tfrac{3t-2}{8\gamma}-\tfrac{tL}{4\gamma^2})\|g_{X, t-1}\|^2 \right]\leq (\tfrac{\gamma}{2}+ \tfrac{L}{4} + \tfrac{lk}{2})D_X^2 +\tsum_{t=2}^k(\tfrac{7t-2}{4\gamma}-\tfrac{tL}{2\gamma^2})\tfrac{\sigma^2}{b_t} + \tfrac{\sigma^2}{\gamma b_1}.
\end{align*}
Then, due to the random output rule \eqref{random_output_0}, we have $\tsum_{t=2}^k\bbe\left[(\tfrac{3t-2}{8\gamma}-\tfrac{tL}{4\gamma^2})\|g_{X, t-1}\|^2 \right] = \tsum_{t=2}^k(\tfrac{3t-2}{8\gamma}-\tfrac{tL}{4\gamma^2})\bbe[\|g_{X, R(k)}\|^2]$, and thus complete the proof.
\end{proof}
\vgap

A few remarks about Theorem~\ref{thm_2_0} are in place. \revision{Firstly, Theorem~\ref{thm_2_0} assumes the knowledge of the problem parameters $L$, $l$, $D_X^2$, and $\sigma^2$ to properly choose stepsizes and batch sizes.}{Firstly, Theorem~\ref{thm_2_0} assumes the knowledge of the problem parameter $L$ to properly choose stepsizes. Meanwhile, to achieve the unified convergence rate in \eqref{SPG_unfied_rate}, we also require the knowledge of $l$, $D_X^2$, and $\sigma^2$ to choose the proper batch sizes.} Secondly, to find an $\epsilon$-approximate stationary point $x_R \in X$ such that  $\bbe\|g_{X, R}\|^2\leq \epsilon^2$, the number of SPG iterations is bounded by 
\begin{align*}
\mathcal{O}\left(\tfrac{L D_X}{\epsilon} + \tfrac{L l D_X^2}{\epsilon^2} \right),
\end{align*}
which matches the iteration complexity \eqref{complexity_thm_0} of the PG method in the deterministic setting. Thirdly, considering the batch size in each iteration, the total number of oracle calls $\tsum_{t=1}^k b_t$ (or the sample complexity) can be bounded by 
\begin{align}\label{complexity_bound_spg}
\mathcal{O}\left(\tfrac{L D_X}{\epsilon} + \tfrac{L l D_X^2}{\epsilon^2} + \tfrac{LD_X \sigma^2}{\epsilon^3} + \tfrac{Ll D_X^2\sigma^2}{\epsilon^4} \right).
\end{align}
To the best of our knowledge, this complexity bound is new for the SPG method in the literature.

\subsection{Auto-conditioned stochastic projected gradient (AC-SPG) method}\label{sec_stoch_expectation}
In this subsection, we develop a parameter-free variant of the SPG method. To achieve this goal, we require a stronger assumption on the stochastic oracle such that it can output both the stochastic function value $\cF(x, \xi)$ and the stochastic gradient $\cG(x, \xi) = \nabla \cF(x,\xi)$, where $\cF(x, \xi)$ is smooth uniformly for all $\xi$.

\begin{assumption}[Smoothness of the stochastic function]  \label{assump_smooth_stoch}
Assume $\bbe_\xi[\cF(x, \xi)] = f(x)$, $\bbe_\xi[\cG(x, \xi)] = \nabla f(x)$, and for each $\xi \in \Xi$, there exists an $\cL(\xi) \geq 0$ such that for any $x, y \in X$, $\xi \in \Xi$,
\begin{align}\label{smoothness_of_stohcastic_gradient_0}
\|\cG(x, \xi) - \cG(y, \xi)\| \leq \cL(\xi)\|x-y\|,
\end{align}
and
\begin{align}\label{smoothness_of_stohcastic_gradient}
\cF(x, \xi) - \cF(y, \xi) - \langle \cG(y, \xi), x-y\rangle \leq \tfrac{\cL(\xi)}{2}\|x-y\|^2.
\end{align}
Here, $\cL(\xi) \leq \hat \cL$ for any $\xi \in \Xi$ almost surely, and $\bbe[\cL(\xi)^2] \leq \bar \cL^2$.
\end{assumption}

\revision{}{For notational simplicity, for any $x, y \in X$, we define
\begin{align}
\cL_{x, y} (\xi)&:= \tfrac{2(\cF(x, \xi) - \cF(y, \xi) - \langle \nabla \cF(y, \xi), x-y \rangle )}{\|x-y\|^2},~\text{and}~
L_{x, y} := \tfrac{2(f(x) - f(y) - \langle \nabla f(y), x-y \rangle  )}{\|x-y\|^2}.
\end{align}
Therefore, we have $\cL_{x, y}(\xi) \leq \cL(\xi) \leq \hat \cL$ and $L_{x, y} \leq L$. 
}

\revision{}{
Before formally presenting the auto-conditioned variant of the SPG method, we first propose two different choices to estimate the local Lipschitz constant at the local search points $x_t$ and $x_{t-1}$ using the stochastic oracle.
\begin{itemize}
    \item \textbf{Estimator I}: Collect $b_t'$ i.i.d. observations from the stochastic oracle, denoted by $\bar \xi_1^t, ..., \bar \xi_{b_t'}^t$, and compute
    \begin{align}
  \bar\cL_t &:= \tfrac{1}{b_t'}\tsum_{j=1}^{b_t'} \cL_{x_t, x_{t-1}}(\bar \xi_j^t), ~~\text{for $t\geq 1$}.\label{evaluate_L_t_2}
  \end{align}
  \item \textbf{Estimator II}: Set $\bar \cL_0$ such that $0 < \bar \cL_0 \leq \hat \cL$. Collect $b_t'$ i.i.d. observations from the stochastic oracle, denoted by $\bar \xi_1^t, ..., \bar \xi_{b_t'}^t$, and compute
    \begin{align}
  \bar\cL_t &:= 
  \max\left\{ \bar \cL_0, \max_{j=1,...,b_t'} \cL_{x_t, x_{t-1}}(\bar \xi_j^t)\right\},~~\text{for $t\geq 1$}.\label{evaluate_L_t_2_2}
  \end{align}
\end{itemize}}
\revision{}{Clearly, Estimator I is an averaged estimator constructed with i.i.d. observations; thus, it is also an unbiased estimator of 
\begin{align}\label{define_L_t_stochastic}
L_t := \tfrac{2(f(x_{t}) - f(x_{t-1}) - \langle \nabla f(x_{t-1}), x_t - x_{t-1}\rangle)}{\|x_t - x_{t-1}\|^2}
\end{align}
under Assumption~\ref{assump_smooth_stoch}. 
The next lemma characterizes the concentration of $\bar \cL_t$ in \eqref{evaluate_L_t_2} around $L_t$.}

\begin{lemma}\label{lemma_L_t_stochastic}
Suppose Assumption \ref{assump_smooth_stoch} holds, \revision{and let $L_t$ and $\bar \cL_t$ be defined in \eqref{define_L_t_stochastic} and  \eqref{evaluate_L_t_2}, respectively.}{and let $L_t$ and $\bar \cL_t$ be defined in \eqref{define_L_t_stochastic} and  \eqref{evaluate_L_t_2}, respectively.} 
We have 
\begin{align}
\bbe[\bar \cL_t] = \bbe[L_t],\quad \bbe[(\bar \cL_t - L_t)^2]\leq \bbe[\tfrac{\bar \cL^2}{b_t'} ],
\end{align}
and with probability at least $1-\delta$,
\begin{align}\label{concentration_L_t}
\bar \cL_t \leq L_t + \tfrac{2(\hat \cL + L) \log(1/\delta)}{3b_t'} + \sqrt{\tfrac{2\bar \cL^2 \log(1/\delta)}{b_t'}}.
\end{align}
\end{lemma}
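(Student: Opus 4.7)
The plan is to condition on the filtration $\cF_{t-0.5}$, under which $x_{t-1}$ and $x_t$ (and hence the denominator $\|x_t - x_{t-1}\|^2$) are deterministic, while the sample batch $\{\bar\xi_1^t,\dots,\bar\xi_{b_t'}^t\}$ is i.i.d.\ and independent of everything else. Define the per-sample estimator
\[
\cL_t^{(j)} := \tfrac{2\bigl(\cF(x_t,\bar\xi_j^t) - \cF(x_{t-1},\bar\xi_j^t) - \langle \cG(x_{t-1},\bar\xi_j^t), x_t - x_{t-1}\rangle\bigr)}{\|x_t - x_{t-1}\|^2},
\]
so that $\bar\cL_t = \frac{1}{b_t'}\sum_{j=1}^{b_t'} \cL_t^{(j)}$. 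First I would note the three key properties of $\cL_t^{(j)}$ conditional on $\cF_{t-0.5}$: (i) by Assumption~\ref{assump_unbiasness} applied to both $\cF$ and $\cG$, $\bbe[\cL_t^{(j)}\mid \cF_{t-0.5}] = L_t$; (ii) by the uniform smoothness \eqref{smoothness_of_stohcastic_gradient} (applied in both directions $x_t\leftrightarrow x_{t-1}$), $|\cL_t^{(j)}|\le \cL(\bar\xi_j^t)\le \hat\cL$ a.s., while $L_t$ itself satisfies $|L_t|\le L$ by \eqref{smoothness_2}; (iii) $\bbe[(\cL_t^{(j)})^2\mid \cF_{t-0.5}]\le \bbe[\cL(\bar\xi_j^t)^2]\le \bar\cL^2$.

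The first two claims of the lemma then fall out immediately: tower property gives $\bbe[\bar\cL_t] = \bbe[L_t]$, and since the $\cL_t^{(j)} - L_t$ are conditionally i.i.d.\ with mean zero,
\[
\bbe\bigl[(\bar\cL_t - L_t)^2 \mid \cF_{t-0.5}\bigr] = \tfrac{1}{b_t'}\, \mathrm{Var}\bigl(\cL_t^{(1)}\mid \cF_{t-0.5}\bigr) \le \tfrac{1}{b_t'}\,\bbe\bigl[(\cL_t^{(1)})^2\mid \cF_{t-0.5}\bigr] \le \tfrac{\bar\cL^2}{b_t'},
\]
and taking total expectation yields the stated variance bound.

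For the high-probability bound \eqref{concentration_L_t}, I would apply Bernstein's inequality to the conditionally i.i.d., zero-mean random variables $Z_j := \cL_t^{(j)} - L_t$, which satisfy $|Z_j|\le \hat\cL + L$ and $\bbe[Z_j^2\mid \cF_{t-0.5}]\le \bar\cL^2$. The one-sided Bernstein bound yields, with conditional probability at least $1-\delta$,
\[
\bar\cL_t - L_t \;=\; \tfrac{1}{b_t'}\tsum_{j=1}^{b_t'} Z_j \;\le\; \tfrac{2(\hat\cL + L)\log(1/\delta)}{3 b_t'} + \sqrt{\tfrac{2\bar\cL^2 \log(1/\delta)}{b_t'}},
\]
matching \eqref{concentration_L_t}. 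The unconditional statement follows because the right-hand side is deterministic.

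The only mildly subtle point, and the main place where care is needed, is ensuring the proper conditioning: $x_t$ is built from the batch $\{\xi_j^t\}$ and is therefore $\cF_{t-0.5}$-measurable but independent of the fresh batch $\{\bar\xi_j^t\}$, so conditioning on $\cF_{t-0.5}$ legitimately freezes the ratio's denominator while preserving i.i.d.\ structure of the numerator summands. Once this is set up cleanly, the expectation, variance, and Bernstein steps are essentially mechanical.
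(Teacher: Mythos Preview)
Your proposal is correct and follows essentially the same approach as the paper: define the per-sample estimator $\cL_t(\bar\xi_j^t)$, use unbiasedness and the smoothness bound $|\cL_t(\bar\xi_j^t)|\le \cL(\bar\xi_j^t)$ to obtain the mean and variance identities, and then apply Bernstein's inequality with range bound $\hat\cL+L$ and second-moment bound $\bar\cL^2$ for the concentration statement. If anything, your explicit conditioning on $\cF_{t-0.5}$ is more careful than the paper's own proof, which glosses over this measurability structure.
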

\begin{proof}
For convenience, we define 
\begin{align}\label{def_cL_t_xi}
\cL_t(\bar\xi_j^t)  := \tfrac{2(\cF_t(x_{t}, \bar\xi^t_j) - \cF_t(x_{t-1}, \bar\xi^t_j) - \langle \cG_t(x_{t-1}, \bar\xi^t_j), x_t - x_{t-1}\rangle)}{\|x_t - x_{t-1}\|^2}.
\end{align}
Then by the definition of $\bar\cL_t$ in \eqref{evaluate_L_t_2}, we have $\bar \cL_t = \tfrac{1}{b_t'} \tsum_{j=1}^{b_t'}\cL_t(\bar\xi_j^t)$. By Assumption \ref{assump_smooth_stoch}, we have
\begin{align}\label{unbiasedness_L_t}
\bbe[\bar \cL_t] = \bbe[L_t].
\end{align}
By further utilizing Assumption~\ref{assump_smooth_stoch}, we have
\begin{align}\label{variance_L_t}
\bbe[(\bar \cL_t - L_t)^2] &= \bbe[(\tfrac{\sum_{j=1}^{b_t'}\cL_t(\bar\xi_j^t)}{b_t'} - L_t)^2]= \bbe[\tfrac{1}{b_t'}(\cL_t(\bar\xi_1^t) - L_t)^2]\nn\\
& \leq  \bbe[\tfrac{1}{b_t'}(\cL_t(\bar\xi_1^t))^2] \leq \tfrac{\bar \cL^2}{b_t'}.
\end{align}
Then by using Bernstein's inequality and $|\bar \cL_t - L_t| \leq \hat \cL + L$ almost surely, we have that for any $\lambda >0$,
\begin{align}
\mathbb{P}\left( \bar\cL_t - L_t \geq \tfrac{(\hat \cL + L) \lambda}{3b_t'} + \sqrt{\tfrac{2\bar \cL^2 \lambda}{b_t'}}\right) \leq \exp(-\lambda),
\end{align}
which indicates \eqref{concentration_L_t}.
\end{proof}
\vgap

{\color{black}
On the other hand, compared to Estimator I, Estimator II takes the maximum over $\bar \cL_0$ and the estimators generated from i.i.d. observations, making it more conservative. However, we find Estimator II critical to attain a unified complexity bound for the convex and nonconvex settings similar to \eqref{complexity_bound_spg}. To achieve this goal, we first prove the following lemma, which characterizes the probability that Estimator II significantly underestimates the deterministic local Lipschitz constant.
\begin{lemma}\label{tail_assumption}
    Let $\bar\cL_0$ satisfy $0 < \bar \cL_0 \leq \hat \cL$, and define $p(x, y):= \mathbb{P}\left(\max\{\bar \cL_0, \cL_{x, y}(\xi)\} \leq\tfrac{1}{2} L_{x, y}\right)$ for $x, y \in X$. We have
    \begin{align}\label{def_beta}
\beta:= \max_{x, y \in X} p(x, y) < 1.
\end{align}
As a consequence, for Estimator II in \eqref{evaluate_L_t_2_2}, we have
\begin{align}\label{geometric_prob}
\mathbb{P}(\bar \cL_t \leq \tfrac{1}{2} L_t) \leq \beta^{b_t'}.
\end{align}
\end{lemma}
\begin{proof}
For notational simplicity, we define $g(x, y, \xi) := \max\{\bar \cL_0, \cL_{x, y}(\xi)\} - \tfrac{1}{2} L_{x, y}$. If $L_{x,y} > 0$, we have
\begin{align*}
p(x, y) = \mathbb{P}(g(x, y, \xi) \leq 0)
\leq  \mathbb{P}( \cL_{x, y}(\xi) \leq \tfrac{1}{2}L_{x, y}) \overset{(i)}\leq \mathbb{P}( \cL_{x, y}(\xi) <L_{x, y}) \overset{(ii)}< 1,
\end{align*}
where step (i) follows from $L_{x,y} > 0$ and step (ii) follows from $\bbe[\cL_{x, y}(\xi)] = L_{x, y}$. If $L_{x, y} \leq 0$, since $\bar \cL_0 >0$, we have $p(x, y) \leq \mathbb{P}(\bar \cL_0 \leq 0) = 0$. Therefore, we obtain
\begin{align}\label{pointwise_bound}
p(x, y) < 1,~~\forall (x, y) \in X \times X.
\end{align}
On the other hand, due to the continuity of $f(x)$, $\nabla f(x)$, $\cF(x, \xi)$, and $\cG(x, \xi)$ regarding $x$ (for any $\xi$), it is not hard to see that, for any $\xi$, $(x, y) \rightarrow g(x, y, \xi)$ is a continuous mapping on $X \times X$. Next, we show that $p(x, y) = \bbe[\mathbb{I}\{g(x, y, \xi)\leq 0\}]$ is upper semicontinuous. Let $(x_n, y_n) \rightarrow (x,y)$ in  $X\times X$. Then for any $\xi$, the continuity of $g$ indicates $\lim_{n \rightarrow \infty}g(x_n, y_n, \xi) = g(x, y, \xi)$. Consequently, we have
\begin{align}\label{indicator_inequality}
    \limsup_{n\rightarrow\infty} \mathbb{I}\{g(x_n, y_n, \xi)\leq 0\} \leq \mathbb{I}\{g(x, y, \xi)\leq 0\}.
\end{align}
This inequality holds since: if $g(x, y, \xi) >0$, then for sufficiently large $n$, we have $g(x_n, y_n, \xi) >0$, so $\limsup_{n\rightarrow\infty} \mathbb{I}\{g(x_n, y_n, \xi)\leq 0\} = 0 = \mathbb{I}\{g(x, y, \xi)\leq 0\}$; if $g(x, y, \xi) \leq 0$, $\mathbb{I}\{g(x, y, \xi)\leq 0\} = 1$ and the inequality is trivial. Then we have
\begin{align*}
\limsup_{n\rightarrow\infty} p(x_n, y_n) &= \limsup_{n\rightarrow\infty} \bbe\left[ \mathbb{I}\{g(x_n, y_n, \xi)\leq 0\}\right]\\ 
&\overset{(i)}\leq \bbe\left[\limsup_{n\rightarrow\infty}\mathbb{I}\{g(x_n, y_n, \xi)\leq 0\}  \right]\\
&\overset{(ii)}\leq \bbe[\mathbb{I}\{g(x, y, \xi)\leq 0\}] = p(x,y),
\end{align*}
where step (i) follows from Fatou's lemma, and step (ii) follows from Ineq.~\eqref{indicator_inequality}. Therefore, $p(x,y)$ is an upper semicontinuous function on $X \times X$. Invoking that $X \times X$ is compact and $p(x, y) < 1,~\forall (x, y) \in X\times X$, we arrive at
\begin{align*}
\beta:= \max_{x, y \in X} p(x, y) < 1.
\end{align*}

Finally, we show~Ineq.~\eqref{geometric_prob}. Specifically, if $\bar \cL_0 > \tfrac{L_t}{2}$, we have $\mathbb{P}(\bar \cL_t \leq \tfrac{1}{2} L_t) = 0$. If $\bar \cL_0 \leq \tfrac{L_t}{2}$, we have
\begin{align}
\mathbb{P}(\bar \cL_t \leq \tfrac{1}{2} L_t) = \prod_{j=1}^{b_t'}\mathbb{P}\left(\cL_{x_t, x_{t-1}}(\bar \xi_j^t) \leq \tfrac{1}{2} L_t\right) = \prod_{j=1}^{b_t'}p(x_t, x_{t-1}) \leq \beta^{b_t'},
\end{align}
and we complete the proof of the lemma.
\end{proof}

\vgap
Through Lemma~\ref{tail_assumption}, we show that Estimator II is robust in the sense that the probability of significantly underestimating the local Lipschitz constant $L_t$ decays geometrically as the batch size $b_t'$ increases. Moreover, in many stochastic optimization problems, such as generalized linear regression, $1-\beta$ is typically a constant bounded away from zero, so that we would not need a very large $b_t'$ to control the probability in \eqref{geometric_prob}.
}

\begin{algorithm}[H]\caption{Auto-conditioned stochastic projected gradient (AC-SPG) method}\label{alg_2}
	\begin{algorithmic}
		\State{\textbf{Input}: initial point $x_0 \in X$, $\bar \cL_0$ such that $0< \bar\cL_0 \leq \hat\cL$, \revision{}{a postive constant $\eta>0$, and a choice of the local Lipschitz estimator in either \eqref{evaluate_L_t_2} or \eqref{evaluate_L_t_2_2}.} 
  }
		\For{$t=1,\cdots, k$}
		\State{Generate i.i.d. observations $\xi_1^t, ..., \xi_{b_t}^t$ and define 
        \begin{align}
            \hat G_t(x) &:= \tfrac{1}{b_t}\tsum_{j=1}^{b_t} \cG(x, \xi_j^t).\label{def_hat_g_2}
        \end{align}}
        \State{Compute
  \begin{align}
  x_t = \arg \min_{x\in X} \left\{\langle  \hat G_t(x_{t-1}) , x \rangle + \tfrac{\gamma_t}{2}\|x_{t-1} - x\|^2 \right\}, \label{prox-mapping_2}
  \end{align}}
  \State{where 
  \begin{align}
  \gamma_t &:= \revision{}{4\hat \cL_{t-1}}, ~~\text{with}~\hat \cL_{t-1}:= \max\{ \bar \cL_0, ..., \bar \cL_{t-1}\}.\label{def_gamma_t_2}
  \end{align}
  }\State{
  Collect i.i.d. observations $\bar \xi_1^t, ..., \bar \xi_{b_t'}^t$ and compute \revision{}{$\bar \cL_t$ with the choice of local Lipschitz estimator (\eqref{evaluate_L_t_2} or \eqref{evaluate_L_t_2_2}).}
  }
\EndFor
	\end{algorithmic}
\end{algorithm}

\revision{}{The AC-SPG method 
is presented in Algorithm~\ref{alg_2}. In particular,} in each iteration of AC-SPG, we collect two sample batches using the stochastic oracle, i.e., $\{\xi_1^{t}, ..., \xi_{b_t}^t\}$ and $\{\bar\xi_{1}^{t}, ..., \bar\xi_{b_t'}^t\}$. The first batch is used to calculate the operator $\hat G_t(\cdot)$ to perform the prox-mapping update in \eqref{prox-mapping_2}, while the second batch is employed to \revision{calculate the mini-batched operators $\bar F_t(\cdot)$ and $\bar G_t(\cdot)$, which are then utilized to}{compute the estimator $\bar \cL_t$ of the local smoothness constant. } For 
$t\in \mathbb{Z}$, 
we define the filtration 
\begin{align*}
\cF_t&:= \sigma(\xi_1^1, ..., \xi_{b_1}^1, \bar \xi_1^1, ..., \bar\xi_{b_1'}^1, 
......, 
\xi_1^t, ..., \xi_{b_t}^t, \bar \xi_t^t, ..., \bar\xi_{b_t'}^t),\\
\cF_{t+0.5}&:= \sigma(\xi_1^1, ..., \xi_{b_1}^1, \bar \xi_1^1, ..., \bar\xi_{b_1'}^1, 
......., 
\xi_1^t, ..., \xi_{b_t}^t, \bar \xi_t^t, ..., \bar\xi_{b_t'}^t, \xi_1^{t+1}, ..., \xi_{b_{t+1}}^{t+1}).
\end{align*}
Then clearly, $x_t$ is $\cF_{t-0.5}$-measurable, $\bar \cL_t$ is $\cF_t$-measurable, and $\gamma_t$ is $\cF_{t-1}$-measurable. Meanwhile, the batch size $b_t$ should be set to be $\cF_{t-1}$-measurable. 

\revision{}{Similar to the deterministic setting, we can divide the iterates $\{x_0, x_1, x_2, ..., x_k\}$ into segments based on the growth of the local smoothness estimator $\hat \cL_{t}$. Specifically, we start a new segment whenever $\bar \cL_t > \tfrac{3}{2} \hat \cL_{t-1}$, and this creates a new indexing rule
\begin{align}\label{new_indexing_rule}
 x_t = x^{(S(t))}_{I(t)},
\end{align}
where $S(t)$ is the segment index of the search point, and $I(t)$ is the index of the search point within the current segment. Then we define the quantities $\bar \cL_{I(t)}^{(S(t))}$ and $\hat \cL_{I(t)}^{(S(t))}$ in the same way as $\bar \cL_{t}$ and $\hat \cL_{t}$,
just with the new indexing rule. Equivalently, we have
\begin{align}\label{def_seg_2}
\{x_0, x_1, x_2, ..., x_k\} = \big\{\underbrace{x_0^{(1)}, x_1^{(1)}, ..., x_{s_1}^{(1)}}_{\text{segment 1}}, \underbrace{x_1^{(2)}, ..., x_{s_2}^{(2)}}_{\text{segment 2}}, ..., \underbrace{x_1^{(m)}, ..., x_{s_m}^{(m)}}_{\text{segment $m$}}\big\},
\end{align}
where $s_i$ is the length of epoch $i$ and
\begin{align}
\bar \cL_{1}^{(i)} &> \tfrac{3}{2} \hat \cL_{s_{i-1}}^{(i-1)}, ~\text{for }i \geq 2 \label{eq_5_0_2}\\
\bar \cL_t^{(i)} &\leq \tfrac{3}{2} \hat \cL_{t-1}^{(i)}, ~\text{for } 2 \leq t \leq s_i, ~i\geq 1.\label{eq_5_2}
\end{align}
For notational simplicity, we denote $x_0^{(i)} = x_{s_{i-1}}^{(i-1)}$ for $i\geq 2$. Moreover, since $\bar \cL_t \leq \hat \cL$ for all $t \geq 0$ due to Assumption~\ref{assump_smooth_stoch}, we have that the total number of segments $m$ satisfy
\begin{align}\label{eq_6_2}
    m \leq \lfloor\log_{\frac{3}{2}}\tfrac{\hat \cL}{\bar \cL_0}\rfloor + 1.
\end{align}
}

\revision{}{With the new indices based on the segmentation rule defined}, in AC-SPG, we let the output at time step $k$, denoted as $x_{R(k)}$, be randomly selected from $\{x_1, ..., x_{k-1}\}$ according to the following probability mass function:
\revision{
\begin{align}\label{random_output}
\mathbb{P}(x_{R(k)} = x_{t-1}) = \left(\tsum_{j=2}^k \tfrac{j-1}{\gamma_j}\right)^{-1} \cdot \tfrac{t-1}{\gamma_t}. \textcolor{red}{\text{this need to be changed as well.}}
\end{align}}
{\begin{align}\label{random_output}
\mathbb{P}(x_{R(k)} = x_{t-1}) = \left(\tsum_{j=1}^k \tfrac{W(j)}{\gamma_j}\right)^{-1} \cdot \tfrac{W(t)}{\gamma_t},
\end{align}
where 
\begin{align}\label{def_W_t}
W(t) := \begin{cases}
    \tfrac{3 I(t)}{16} -\tfrac{1}{4}, & \text{if $I(t) \geq 2$}\\
    0, &\text{if $I(t) = 1$.}
\end{cases}
\end{align}}
{\color{black}It should be noted that the weight $W(t)$ is similar (up to constant factors) to the weights in the telescope sum in the analysis of AC-PG (see \eqref{eq_7}). However, we apply the random output rule here since we cannot output the iterate with the smallest projected gradient as in the deterministic setting. In particular, given that $I(t)$ is $\cF_t$-measurable, we are unable to compute the weight $W(t)$ until fully completing the iteration $t$ of AC-SPG, and the correlation between $I(t)$ and $\bar \cL_t$ will create extra challenges in the analysis of AC-SPG. Therefore, we introduce Estimator II \eqref{evaluate_L_t_2_2} to address these challenges and obtain the desired unified convergence results; see Theorem~\ref{thm_2} and its proof below.}

It is also worth noting that it is not necessary to store all the search points $\{x_1, ..., x_{k-1}\}$ to generate the output $x_{R(k)}$. Instead, $x_{R(k)}$ can be determined inductively using the following rule
\begin{align*}
\mathcal{W}_k &= \mathcal{W}_{k-1} + \tfrac{\revision{k-1}{W(k)}}{\gamma_k},\\
\mathbb{P}(x_{R(k)} = x_{t}) &= \begin{cases}
\tfrac{\mathcal{W}_{k-1}}{\mathcal{W}_k}, & \text{for $t = R(k-1)$}\\
\tfrac{\revision{k-1}{W(k)}}{\gamma_k \mathcal{W}_k}, & \text{for $t = k-1$},
\end{cases}
\end{align*}
starting from $\revision{W_1}{\mathcal{W}_1} = 0$ and $x_{R(1)} = x_0$.

{\color{black}
{\begin{theorem}\label{thm_2}
Assume Assumptions \ref{assump_unbiasness}-\ref{assump_smooth_stoch} hold. Let $\{x_t\}$ be generated by Algorithm~\ref{alg_2}.

\begin{enumerate}
    \item[(a)] Suppose we apply the Estimator I \eqref{evaluate_L_t_2} to define $\gamma_t$ in \eqref{def_gamma_t_2}. For any $b_t' \geq 1$, we have
\begin{align}\label{res_1_thm_2_case_1}
\bbe\left[M_k\|g_{X, R}\|^2\right] \leq 
\tfrac{(10+k-m)\hat\cL D_X^2}{2} +  \tfrac{k\ell D_X^2}{2} +\bbe\left[\tsum_{t=1}^{k}  (\tfrac{11I(t-1)}{8} + \tfrac{7}{8})\tfrac{\sigma^2}{b_t\gamma_t}\right],
\end{align}
where $M_k:=\tsum_{i=1}^m \tsum_{t=2}^{s_i} (\tfrac{3t}{16}-\tfrac{1}{4}) \tfrac{1}{\gamma_t^{(i)}}\geq \tfrac{(k-m)^2}{128 \hat \cL_{k-1}}$.

Specifically, if we choose an adaptive 
batch size $b_t =  \max\big\{ 1, \lceil\revision{(\tfrac{11t}{8} - \tfrac{1}{2})}{(\tfrac{11I(t-1)}{8} + \tfrac{7}{8})}\tfrac{\alpha}{\gamma_t}\rceil\big\}$
with some $\alpha >0$, we have
\begin{align}
\bbe\left[\|g_{X, R}\|^2\right] \leq \tfrac{64(10 + k-m)\hat \cL^2D_X^2}{(k-m)^2} + \tfrac{64k\ell\hat \cL D_X^2}{(k-m)^2} + \tfrac{ 128 k \hat \cL\sigma^2}{\alpha(k-m)^2}.
\label{res_2_thm_2_case_1}
\end{align}

If we choose a constant batch size $b_t = b$, we have  
\begin{align}\label{res_0_thm_2_case_1}
\bbe\left[\|g_{X, R}\|^2\right] \leq \tfrac{64(10 + k-m)\hat \cL^2D_X^2}{(k-m)^2} + \tfrac{64k\ell\hat \cL D_X^2}{(k-m)^2} + \tfrac{2(11k+3)k\hat \cL\sigma^2}{b\bar \cL_0(k-m)^2}.
\end{align}

\item[(b)] Suppose we apply the Estimator II \eqref{evaluate_L_t_2_2} to define $\gamma_t$ in \eqref{def_gamma_t_2}. 
For any  $b_t'\geq 1$ and the same premise on $b_t$, we have Ineqs.~\eqref{res_1_thm_2_case_1}-\eqref{res_0_thm_2_case_1} hold. 
Moreover, if $b_t' \geq \hat b \geq 2 \log_{1/\beta} (k+1)$, we have
\begin{align}\label{res_1_thm_2}
\bbe\left[M_k\|g_{X, R}\|^2\right] \leq 
\tfrac{21\hat\cL D_X^2}{4} +  \tfrac{k\ell D_X^2}{2}+\bbe\left[\tsum_{t=1}^{k}  (\tfrac{11I(t-1)}{8} + \tfrac{7}{8})\tfrac{\sigma^2}{b_t\gamma_t}\right].
\end{align}

 Specifically, if we choose an adaptive 
batch size $b_t =  \max\big\{ 1, \lceil\revision{(\tfrac{11t}{8} - \tfrac{1}{2})}{(\tfrac{11I(t-1)}{8} + \tfrac{7}{8})}\tfrac{\alpha}{\gamma_t}\rceil\big\}$
with some $\alpha >0$, we have
\begin{align}
\bbe\left[\|g_{X, R}\|^2\right] \leq \tfrac{672\hat \cL^2D_X^2}{(k-m)^2} + \tfrac{64kl\hat \cL D_X^2}{(k-m)^2} + \tfrac{ 128 k \hat \cL\sigma^2}{\alpha(k-m)^2}.\label{res_2_thm_2}
\end{align}

If we choose a constant batch size $b_t = b$, we have
\begin{align}\label{res_0_thm_2}
\bbe\left[\|g_{X, R}\|^2\right] \leq \tfrac{672 \hat \cL^2D_X^2}{(k-m)^2} + \tfrac{64kl\hat \cL D_X^2}{(k-m)^2} + \tfrac{2(11k+3)k\hat \cL\sigma^2}{b\bar \cL_0(k-m)^2}.
\end{align}
\end{enumerate}
\end{theorem}
}

{\color{black}
\begin{proof}
By the optimality condition of \eqref{prox-mapping_2}, we have the three-point inequality, i.e., for any $x \in X$,
\begin{align}\label{three-point_lemma}
\langle \hat G_t(x_{t-1}), x_t - x \rangle + \tfrac{\gamma_t}{2}\|x_{t-1} - x_t\|^2 + \tfrac{\gamma_t}{2}\|x_t - x\|^2 \leq \tfrac{\gamma_t}{2}\|x_{t-1} - x\|^2.
\end{align}
Let $\bar \cL_{t, +} := \max\{\bar \cL_t, 0\}$. Similarly, define $\bar \cL^s_{t, +} := \max\{\bar \cL^s_t, 0\}$ when using the segmentation indexing rule \eqref{new_indexing_rule}. Define $\hat \delta_t(x_{t-1}) := \hat G_t(x_{t-1}) - \nabla f(x_{t-1})$.
Adding $f(x_{t-1}) - \bar \cL_{t,+}\|x_{t-1}-x_t\|^2$ on both sides of the above inequality and rearranging the terms, we have
\begin{align*}
&f(x_{t-1}) + \langle \nabla f(x_{t-1}) , x_t - x_{t-1}\rangle + \langle \hat \delta_t(x_{t-1}), x_t - x\rangle +( \tfrac{\gamma_t}{2}-\bar \cL_{t,+})\|x_{t-1} - x_t\|^2 + \tfrac{\gamma_t}{2}\|x_t - x\|^2\nn\\
&\leq f(x_{t-1}) +\langle \nabla f(x_{t-1}), x - x_{t-1}\rangle + \tfrac{\gamma_t}{2}\|x_{t-1} - x\|^2 - \bar \cL_{t,+}\|x_{t-1} - x_t\|^2.
\end{align*}
Then, using the definition of $L_t$ in \eqref{define_L_t_stochastic} and rearranging the terms, we obtain
\begin{align}\label{eq_1_2}
&f(x_t) + (\tfrac{\gamma_t}{2} - \bar \cL_{t,+})\|x_{t-1} - x_t\|^2 + \langle \hat \delta_t(x_{t-1}), x_t - x\rangle + \tfrac{\gamma_t}{2}\|x_t - x\|^2\nn\\
&\leq f(x_{t-1}) +\langle \nabla f(x_{t-1}) , x - x_{t-1}\rangle + \tfrac{\gamma_t}{2}\|x_{t-1} - x\|^2 + (\tfrac{L_t}{2}- \bar \cL_{t,+})\|x_{t-1} - x_t\|^2.
\end{align}
Taking $x = x_{t-1}$ in Ineq.~\eqref{eq_1_2} yields
\begin{align}\label{eq_2_2}
 (\gamma_t - \bar \cL_{t,+})\|x_{t-1} - x_t\|^2 + \langle \hat \delta_t(x_{t-1}), x_t - x_{t-1}\rangle  - (\tfrac{L_t}{2}- \bar \cL_{t,+})\|x_{t-1} - x_t\|^2\leq f(x_{t-1}) - f(x_t).
\end{align}
On the other hand, setting $x = x^*$ in Ineq.~\eqref{eq_1_2}  and using the weakly convex property \eqref{weakly_convex}, we obtain
\begin{align}\label{eq_3_2}
&f(x_t) + (\tfrac{\gamma_t}{2} -\bar \cL_{t,+})\|x_{t-1} - x_t\|^2 + \langle \hat \delta_t(x_{t-1}), x_t - x^*\rangle + \tfrac{\gamma_t}{2}\|x_t - x^*\|^2\nn\\
&\leq f(x^*) + \tfrac{l}{2}\|x_{t-1} - x^*\|^2  + \tfrac{\gamma_t}{2}\|x_{t-1} - x^*\|^2 + (\tfrac{L_t}{2}- \bar \cL_{t,+})\|x_{t-1} - x_t\|^2.
\end{align}
By taking a telescope sum of the above inequality from $t=1$ to $k$, we have
\begin{align}\label{eq_4_2}
&\tsum_{t=1}^k \left[f(x_t) -f(x^*) + (\tfrac{\gamma_t}{2} - \bar \cL_{t,+})\|x_{t-1} - x_t\|^2 + \langle \hat \delta_t(x_{t-1}), x_t - x^*\rangle \right]\nn\\
& \leq \tfrac{\gamma_1}{2}\|x_0 - x^*\|^2 + \tsum_{t=1}^{k-1} (\tfrac{\gamma_{t+1}}{2} - \tfrac{\gamma_t}{2})\|x_t - x^*\|^2 - \tfrac{\gamma_k}{2}\|x_k -x^*\|^2\nn\\
&\quad + \tsum_{t=1}^k \tfrac{l}{2}\|x_{t-1}-x^*\|^2 + \tsum_{t=1}^k (\tfrac{L_t}{2}- \bar \cL_{t,+})\|x_{t-1} - x_t\|^2\nn\\
& \overset{(i)}\leq \tfrac{\gamma_k}{2}D_X^2 + \tfrac{kl}{2}D_X^2 + \tsum_{t=1}^k (\tfrac{L_t}{2}- \bar \cL_{t,+})\|x_{t-1} - x_t\|^2\nn\\
&\overset{(ii)}= 2\hat\cL_{k-1}D_X^2 + \tfrac{kl}{2}D_X^2 + \tsum_{t=1}^k (\tfrac{L_t}{2}- \bar \cL_{t,+})\|x_{t-1} - x_t\|^2,
\end{align}
where step (i) follows from $\gamma_{t+1} \geq \gamma_t$ 
and the definition $D_X:= \max_{x, y\in X}\|x-y\|$, and step (ii) utilizes $\gamma_t = 4\hat \cL_{t-1}$ as defined in \eqref{def_gamma_t_2}.
Next, we lower bound the LHS of Ineq.~\eqref{eq_4_2}. Recalling the segmentation defined in \eqref{def_seg_2}, we define $$\cS_i:= \tsum_{t=1}^{s_i}\left[f(x^{(i)}_t) -f(x^*) + (\tfrac{\gamma_t^{(i)}}{2} - \tfrac{\bar \cL^{(i)}_{t, +}}{2 \eta})\|x^{(i)}_{t-1} - x^{(i)}_t\|^2 + \langle \hat \delta_t^{(i)}(x^{(i)}_{t-1}), x_t^{(i)} - x^*\rangle \right],$$
where we use the new indexing rule as in \eqref{new_indexing_rule} to define $\gamma_{t}^{(i)}$ as the stepsize at $t$-th iteration of the $i$-th segment.
Then we have for any $i \in [m]$,
\begin{align}\label{eq_7_0_2}
\cS_i &\overset{(i)}\geq \tsum_{t=1}^{s_i}\left[f(x^{(i)}_t) -f(x_{s_i}^{(i)}) + (\tfrac{\gamma^{(i)}_t}{2} - \bar \cL^{(i)}_{t,+})\|x^{(i)}_{t-1} - x^{(i)}_t\|^2  + \langle \hat \delta_t^{(i)}(x^{(i)}_{t-1}), x_t^{(i)} - x^*\rangle\right]\nn\\
& \overset{(ii)}\geq \tsum_{t=1}^{s_i} \left[\tsum_{j=t}^{s_i-1}\big(f(x^{(i)}_j) - f(x^{(i)}_{j+1}) \big) + (\tfrac{\gamma^{(i)}_t}{4} - \bar \cL^{(i)}_{t,+})\|x^{(i)}_{t-1} - x^{(i)}_{t,+}\|^2- \tfrac{1}{\gamma^{(i)}_t}\| \hat \delta_t^{(i)}(x^{(i)}_{t-1})\|^2+ \langle \hat \delta_t^{(i)}(x^{(i)}_{t-1}), x_{t-1}^{(i)} - x^*\rangle \right]\nn\\
& \overset{(iii)}\geq \tsum_{t=1}^{s_i} \left[\tsum_{j=t}^{s_i-1}\left((\gamma_{j+1}^{(i)} - \bar \cL_{j+1, +}^{(i)})\|x^{(i)}_j - x_{j+1}^{(i)}\|^2 + \langle \hat \delta^{(i)}_{j+1}(x_{j}), x^{(i)}_{j+1} - x^{(i)}_{j}\rangle  - (\tfrac{L_{j+1}^{(i)}}{2}- \bar \cL_{j+1,+}^{(i)})\|x^{(i)}_{j} - x^{(i)}_{j+1}\|^2\right)\right.\nn\\
&\qquad\qquad  \left.+ (\tfrac{\gamma^{(i)}_t}{4} - \bar \cL^{(i)}_{t,+})\|x^{(i)}_{t-1} - x^{(i)}_t\|^2 - \tfrac{1}{\gamma^{(i)}_t}\| \hat \delta_t^{(i)}(x^{(i)}_{t-1})\|^2+ \langle \hat \delta_t^{(i)}(x^{(i)}_{t-1}), x_{t-1}^{(i)} - x^*\rangle  \right]\nn\\
& \overset{(iv)}\geq \tsum_{t=1}^{s_i} \left[\tsum_{j=t}^{s_i-1}\left((\tfrac{3}{4}\gamma_{j+1}^{(i)} - \bar \cL_{j+1,+}^{(i)})\|x^{(i)}_j - x_{j+1}^{(i)}\|^2 - \tfrac{1}{\gamma^{(i)}_{j+1}}\| \hat \delta_{j+1}^{(i)}(x^{(i)}_{j})\|^2  - (\tfrac{L_{j+1}^{(i)}}{2}- \bar \cL_{j+1,+}^{(i)})\|x^{(i)}_{j} - x^{(i)}_{j+1}\|^2\right)\right.\nn\\
&\qquad\qquad  \left.+ (\tfrac{\gamma^{(i)}_t}{4} - \bar \cL^{(i)}_{t,+})\|x^{(i)}_{t-1} - x^{(i)}_t\|^2 - \tfrac{1}{\gamma^{(i)}_t}\| \hat \delta_t^{(i)}(x^{(i)}_{t-1})\|^2+ \langle \hat \delta_t^{(i)}(x^{(i)}_{t-1}), x_{t-1}^{(i)} - x^*\rangle  \right]\nn\\
& \overset{(v)} = \underbrace{\tsum_{t=2}^{s_i}\left((\tfrac{3t}{4} - \tfrac{1}{2})\gamma_t^{(i)} - t\bar \cL_{t,+}^{(i)}\right)\|x^{(i)}_{t-1} - x^{(i)}_t\|^2 + (\tfrac{\gamma_1^{(i)}}{4}-\bar \cL_{1,+}^{(i)})\|x_1^{(i)} - x_0^{(i)}\|^2}_{\cS_{i, 1}}\nn\\
&\quad\quad - \tsum_{t=1}^{s_i}  \left(\tfrac{t}{\gamma^{(i)}_t}\| \hat \delta_t^{(i)}(x^{(i)}_{t-1})\|^2 - \langle \hat \delta_t^{(i)}(x^{(i)}_{t-1}), x_{t-1}^{(i)} - x^*\rangle + (t-1)(\tfrac{L_{t}^{(i)}}{2}- \bar \cL_{t,+}^{(i)})\|x^{(i)}_{t-1} - x^{(i)}_{t}\|^2\right),
\end{align}
where step (i) follows from $f(x_{s_i}^{(i)}) \geq f(x^*)$, step (ii) follows from Young's inequality, step (iii) follows from Ineq.~\eqref{eq_2_2}, step (iv) stems from Young's inequality, and step (v) follows from rearranging the terms. Recalling that $\gamma_t^{(i)} = 4 \hat \cL^{(i)}_{t-1}$ and Ineq.~\eqref{eq_5_2}, we have $\bar \cL_{t,+}^{(i)}\leq \tfrac{3}{2} \hat \cL^{(i)}_{t-1}= \tfrac{3}{8}\gamma_t^{(i)}$ for $t\geq 2$ due to the segmentation. Then we can further bound the above inequality by 
\begin{align}\label{eq_7_2}
\cS_{i, 1} &\geq \tsum_{t=2}^{s_i}[(\tfrac{3t}{4}-\tfrac{1}{2})\gamma_t^{(i)} - \tfrac{3t}{8}\gamma_t^{(i)}] \|x^{(i)}_{t-1} - x^{(i)}_t\|^2+ (\tfrac{\gamma_1^{(i)}}{4}-\bar \cL_{1,+}^{(i)})\|x_1^{(i)} - x_0^{(i)}\|^2\nn\\
& = \tsum_{t=2}^{s_i}(\tfrac{3t}{8}-\tfrac{1}{2})\gamma_t^{(i)} \|x^{(i)}_{t-1} - x^{(i)}_t\|^2+ (\tfrac{\gamma_1^{(i)}}{4}-\bar \cL_{1,+}^{(i)})\|x_1^{(i)} - x_0^{(i)}\|^2\nn\\
& \overset{(i)}\geq  \tsum_{t=2}^{s_i}(\tfrac{3t}{8}-\tfrac{1}{2}) \tfrac{1}{\gamma_t^{(i)}}\|\hat g_{X, t-1}^{(i)}\|^2 -\bar \cL_{1,+}^{(i)}\|x_1^{(i)} - x_0^{(i)}\|^2\nn\\
& \overset{(ii)}\geq  \tsum_{t=2}^{s_i}(\tfrac{3t}{16}-\tfrac{1}{4}) \tfrac{1}{\gamma_t^{(i)}}\|g_{X, t-1}^{(i)}\|^2 - \tsum_{t=2}^{s_i}(\tfrac{3t}{8}-\tfrac{1}{2}) \tfrac{1}{\gamma_t^{(i)}}\|g_{X, t-1}^{(i)}-\hat g_{X, t-1}^{(i)}\|^2 -\bar \cL_{1,+}^{(i)}\|x_1^{(i)} - x_0^{(i)}\|^2\nn\\
& \overset{(iii)}\geq  \tsum_{t=2}^{s_i}(\tfrac{3t}{16}-\tfrac{1}{4}) \tfrac{1}{\gamma_t^{(i)}}\|g_{X, t-1}^{(i)}\|^2 - \tsum_{t=2}^{s_i}(\tfrac{3t}{8}-\tfrac{1}{2}) \tfrac{1}{\gamma_t^{(i)}}\|\hat\delta_{t}^{(i)}(x^{(i)}_{t-1})\|^2 -\bar \cL_{1,+}^{(i)}\|x_1^{(i)} - x_0^{(i)}\|^2,
\end{align}
where step (i) follows from the definition of $\hat g_{X,t}$ in \eqref{def_projected_stochastic_gradient} and $\gamma_1^{(i)} \geq 0$, step (ii) follows from Young's inequality, and step (iii) follows from the definitions of \eqref{def_projected_stochastic_gradient} and \eqref{def_projected_deterministic_gradient} and the nonexpansive property of projection. By combining Ineqs.~\eqref{eq_7_0_2} and \eqref{eq_7_2}, we have
\begin{align}\label{eq_8_2}
\cS_i &\geq \tsum_{t=2}^{s_i}(\tfrac{3t}{16}-\tfrac{1}{4}) \tfrac{1}{\gamma_t^{(i)}}\|g_{X, t-1}^{(i)}\|^2 -\bar\cL_{1,+}^{(i)}\|x_1^{(i)} - x_0^{(i)}\|^2\nn\\
&\quad - \tsum_{t=1}^{s_i}  \left((\tfrac{11t}{8} - \tfrac{1}{2})\tfrac{1}{\gamma^{(i)}_t}\| \hat \delta_t^{(i)}(x^{(i)}_{t-1})\|^2 - \langle \hat \delta_t^{(i)}(x^{(i)}_{t-1}), x_{t-1}^{(i)} - x^*\rangle + (t-1)(\tfrac{L_{t}^{(i)}}{2}- \bar \cL_{t,+}^{(i)})\|x^{(i)}_{t-1} - x^{(i)}_{t}\|^2\right).
\end{align}
Recall the definition $M_k:=\tsum_{i=1}^m \tsum_{t=2}^{s_i} (\tfrac{3t}{16}-\tfrac{1}{4}) \tfrac{1}{\gamma_t^{(i)}}$. Then we can sum up $S_i$ for $i=1,...,m$ and take expectation to get
\begin{align}
&\bbe\left[\text{LHS of Ineq.~\eqref{eq_4_2}}\right] = \bbe[\tsum_{i=1}^m \cS_i]\nn\\
& \overset{(i)}\geq \bbe\left[\tsum_{i=1}^m\tsum_{t=2}^{s_i}(\tfrac{3t}{16}-\tfrac{1}{4}) \tfrac{1}{\gamma_t^{(i)}}\|g_{X, t-1}^{(i)}\|^2 \right] - \bbe\left[\tsum_{i=1}^m \bar \cL_{1,+}^{(i)}\|x_1^{(i)} - x_0^{(i)}\|^2\right] \nn\\
& \quad - \bbe\left[\tsum_{i=1}^m \tsum_{t=1}^{s_i}  \left((\tfrac{11t}{8} - \tfrac{1}{2})\tfrac{1}{\gamma^{(i)}_t}\| \hat \delta_t^{(i)}(x^{(i)}_{t-1})\|^2 - \langle \hat \delta_t^{(i)}(x^{(i)}_{t-1}), x_{t-1}^{(i)} - x^*\rangle + (t-1)(\tfrac{L_{t}^{(i)}}{2}- \bar \cL_{t,+}^{(i)})\|x^{(i)}_{t-1} - x^{(i)}_{t}\|^2\right)\right] \nn\\
& \overset{(ii)}=\bbe\left[M_k\|g_{X, R(k)}\|^2 \right] - \bbe\left[\tsum_{i=1}^m \bar \cL_{1,+}^{(i)}\|x_1^{(i)} - x_0^{(i)}\|^2\right] - \bbe\left[\tsum_{t=1}^k   (\tfrac{11I(t)}{8} - \tfrac{1}{2})\tfrac{1}{\gamma_t}\|\hat \delta_t (x_{t-1})\|^2\right]\nn\\
&\quad + \bbe\left[\tsum_{t=1}^k \langle \hat \delta_t(x_{t-1}),x_{t-1} - x^* \rangle \right] + \bbe\left[ \tsum_{t=1}^k (\tfrac{L_t}{2} - \bar \cL_{t,+})\|x_{t-1} - x_t\|^2 \right] + H_k \nn\\
& \overset{(iii)}\geq\bbe\left[M_k\|g_{X, R(k)}\|^2 \right] - \bbe\left[3\hat \cL_{k-1}D_X^2\right] - \bbe\left[\tsum_{t=1}^k   (\tfrac{11I(t)}{8} - \tfrac{1}{2})\tfrac{1}{\gamma_t}\|\hat \delta_t (x_{t-1})\|^2\right]\nn\\
&\quad + \bbe\left[\tsum_{t=1}^k \langle \hat \delta_t(x_{t-1}),x_{t-1} - x^* \rangle \right] + \bbe\left[ \tsum_{t=1}^k (\tfrac{L_t}{2} - \bar \cL_{t,+})\|x_{t-1} - x_t\|^2 \right] + H_k \nn\\
& \overset{(iv)} \geq \bbe\left[M_k\|g_{X, R(k)}\|^2\right] - \bbe\left[3\hat \cL_{k-1}D_X^2\right] - \bbe\left[\tsum_{t=1}^k  (\tfrac{11I(t-1)}{8} + \tfrac{7}{8})\tfrac{\sigma^2}{b_t\gamma_t}\right]\nn\\
&\quad + \bbe\left[ \tsum_{t=1}^k (\tfrac{L_t}{2} - \bar \cL_{t,+})\|x_{t-1} - x_t\|^2 \right] + H_k.
\label{eq_9_2}
\end{align}
In the derivation above, $H_k := \bbe\left[\tsum_{i=1}^m \tsum_{t=1}^{s_i} t(\bar \cL_{t,+}^{(i)} -\tfrac{L_{t}^{(i)}}{2})\|x^{(i)}_{t-1} - x^{(i)}_{t}\|^2  \right] = \bbe\left[\tsum_{t=1}^k I(t) (\bar \cL_{t,+} - \tfrac{L_t}{2})\|x_{t-1} - x_{t}\|^2 \right]$;
 step (i) follows from Ineq.~\eqref{eq_8_2};
step (ii) follows from 
\begin{align*}
\bbe\left[\tsum_{i=1}^m\tsum_{t=2}^{s_i}(\tfrac{3t}{16}-\tfrac{1}{4}) \tfrac{1}{\gamma_t^{(i)}}\|g_{X, t-1}^{(i)}\|^2 \right] &= \bbe\left[\tsum_{t=1}^{k}\tfrac{W(t)}{\gamma_t}\|g_{X, t-1}\|^2 \right]\\ &= \bbe\left[\left(\tsum_{t=1}^{k}\tfrac{W(t)}{\gamma_t}\right)\|g_{X, R(k)}\|^2 \right] = \bbe\left[M_k\|g_{X, R(k)}\|^2 \right],
\end{align*}
where the first equality utilizes the definition of $W(t)$ in \eqref{def_W_t} and the second equality applies the random output rule \eqref{random_output};
step (iii) follows from  using $\bar\cL_{1,+}^{(i-1)} < \tfrac{2}{3}\bar \cL_1^{(i)} = \tfrac{2}{3}\bar \cL_{1,+}^{(i)}$ for $i\geq 2$ to obtain $\tsum_{i=1}^m \bar \cL_{1,+}^{(i)} \leq 3 \bar \cL_{1,+}^{(m)} \leq 3 \hat \cL_{k-1}$; step (iv) follows from the martingale property $\bbe[\langle \hat \delta_t(x_{t-1}), x_{t-1} - x^*\rangle] = 0$, and using $I(t) \leq I(t-1) + 1$,  $I(t-1)$ being $\cF_{t-1}$-measurable, and Assumption~\ref{assump_variance} to show
\begin{align*}
\bbe\left[\tsum_{t=1}^k   (\tfrac{11I(t)}{8} - \tfrac{1}{2})\tfrac{1}{\gamma_t}\|\hat \delta_t (x_{t-1})\|^2\right] \leq \bbe\left[\tsum_{t=1}^k   (\tfrac{11I(t-1)}{8} + \tfrac{7}{8})\tfrac{1}{\gamma_t}\|\hat \delta_t (x_{t-1})\|^2\right] \leq \bbe\left[\tsum_{t=1}^k  (\tfrac{11I(t-1)}{8} + \tfrac{7}{8})\tfrac{\sigma^2}{b_t\gamma_t}\right].
\end{align*}
By taking expectation on both sides of Ineq.~\eqref{eq_4_2} and combining it with Ineq.~\eqref{eq_9_2}, we have
\begin{align}\label{eq_10_2}
\bbe\left[M_k\|g_{X, R(k)}\|^2\right] + H_k &\leq \bbe\left[5\hat\cL_{k-1}D_X^2\right] + \tfrac{klD_X^2}{2} + \bbe\left[\tsum_{t=1}^{k}  (\tfrac{11I(t-1)}{8} + \tfrac{7}{8})\tfrac{\sigma^2}{b_t\gamma_t}\right]\nn\\
&\leq 5\hat\cL D_X^2 +  \tfrac{k\ell D_X^2}{2}+ \bbe\left[\tsum_{t=1}^{k}  (\tfrac{11I(t-1)}{8} + \tfrac{7}{8})\tfrac{\sigma^2}{b_t\gamma_t}\right].
\end{align}
Moreover, we have
\begin{align}\label{eq_12_2}
M_k \overset{(i)}\geq \tsum_{i=1}^m\tsum_{t=2}^{s_i}(\tfrac{3t}{16}-\tfrac{1}{4}) \tfrac{1}{\gamma_{s_i}^{(i)}} = \tsum_{i=1}^m \tfrac{3(s_i-1)(s_i-\frac{2}{3})}{32\gamma_{s_i}^{(i)}} \overset{(ii)}\geq \tfrac{3(\sum_{i=1}^m\sqrt{(s_i-1)(s_i-\frac{2}{3})})^2}{32(\sum_{i=1}^m \gamma_{s_i}^{(i)})}\overset{(iii)}\geq  \tfrac{(k-m)^2}{32 \gamma_{s_m}^{(m)}}=\tfrac{(k-m)^2}{128 \hat \cL_{k-1}},
\end{align}
where step (i) follows from that $\gamma_{t}^{(i)} \leq \gamma_{s_i}^{(i)}$ for $t\leq s_i$, step (ii) follows from Cauchy-Schwarz inequality, and step (iii) follows from $\gamma_{s_{i-1}}^{(i-1)} \leq \gamma_1^{(i)} = 4 \hat \cL_{s_{i-1}}^{(i-1)}<\tfrac{8}{3}\bar \cL_1^{(i)} \leq \tfrac{2}{3}\gamma_{s_i}^{(i)}$ when $s_i\geq 2$.

Now, it remains to lower-bound the term $H_k$ to establish the final convergence guarantees. Here, we need to distinguish the two choices of estimator $\bar \cL_t$:
\begin{enumerate}
    \item Estimator I \eqref{evaluate_L_t_2}: It should be noted that in this setting, although we have $\bbe[\bar \cL_{t,+}|\cF_{t-1}] \geq \bbe[\bar \cL_t|\cF_{t-1}] = L_t$, it is not necessarily true that $H_k \geq 0$, since $I(t)$ is $\mathcal{F}_t$-measurable and depends on $\bar \cL_t$. Here, we lower bound $H_k$ by
    \begin{align}\label{bound_H_k_case_1}
H_k & \geq  \bbe\left[\tsum_{t=1}^k  \tfrac{I(t)\bar \cL_{t, +}}{2}\|x_{t-1} - x_{t}\|^2 \right] + \bbe\left[\tsum_{t=1}^k I(t) (\tfrac{\bar \cL_{t}}{2} - \tfrac{L_t}{2})\|x_{t-1} - x_{t}\|^2 \right]\nn\\
& =  \bbe\left[\tsum_{t=1}^k  \tfrac{I(t)\bar \cL_{t,+}}{2}\|x_{t-1} - x_{t}\|^2 \right]\nn\\
&\quad + \bbe\left[\tsum_{t=1}^k (I(t-1) + 1) (\tfrac{\bar \cL_t}{2} - \tfrac{L_t}{2})\|x_{t-1} - x_{t}\|^2 \right]\nn\\
& \quad + \bbe\left[\tsum_{t=1}^k (I(t) - I(t-1) - 1) (\tfrac{\bar \cL_t}{2} - \tfrac{L_t}{2})\|x_{t-1} - x_{t}\|^2 \right]\nn\\
&\overset{(i)}= \bbe\left[\tsum_{t=1}^k  \tfrac{I(t)\bar \cL_{t,+}}{2}\|x_{t-1} - x_{t}\|^2 \right]\nn\\
&\quad + \bbe\left[\tsum_{t=1}^k (I(t) - I(t-1) - 1) (\tfrac{\bar \cL_t}{2} - \tfrac{L_t}{2})\|x_{t-1} - x_{t}\|^2 \right]\nn\\
&\overset{(ii)}= \bbe\left[\tsum_{t=1}^k  \tfrac{I(t)\bar \cL_{t,+}}{2}\|x_{t-1} - x_{t}\|^2 \right]   - \bbe\left[\tsum_{i=2}^{m} s_{i-1} (\tfrac{\bar \cL_{1}^{(i)}}{2} - \tfrac{L_{1}^{(i)}}{2}) \| x_{s_{i-1}}^{(i-1)} - x_1^{(i)}\|^2  \right]  \nn\\
&\overset{(iii)}\geq \bbe\left[\tsum_{i=2}^{m} \tfrac{\bar \cL_{1}^{(i)}}{2} \| x_{s_{i-1}}^{(i-1)} - x_1^{(i)}\|^2  \right]  - \bbe\left[\tsum_{i=2}^{m} s_{i-1}\tfrac{\bar \cL_{1}^{(i)}}{2} \| x_{s_{i-1}}^{(i-1)} - x_1^{(i)}\|^2  \right] \nn\\
&\overset{(iv)}\geq  - \tfrac{D_X^2}{2}\bbe\left[\tsum_{i=2}^{m} (s_{i-1}-1) \bar \cL_{1}^{(i)}  \right] \nn\\
& \overset{(v)}\geq - \tfrac{(k-m) \hat \cL D_X^2}{2},
\end{align}
where step (i) follows from that $I(t-1)$ is $\cF_{t-1}$-measurable and $\bbe[\bar \cL_t|\cF_{t-1}] = L_t$; step (ii) follows from the the definition of the segmentation, which indicates that $I(t-1) = I(t) - 1$ if $I(t) \geq 2$ and $I(t-1) = s_{S(t-1)}$ if $I(t) = 1$; 
step (iii) follows from $I(t)\bar \cL_{t,+} \geq \bar\cL_{1}^{S(t)}$ when $I(t) = 1$; step (iv) applies that $\| x_{s_{i-1}}^{(i-1)} - x_1^{(i)}\|^2 \leq D_X^2$; step (v) follows from $\bar \cL_{1}^{(i)} \leq \hat \cL$ and $\tsum_{i=2}^m (s_{i-1} - 1) = k - s_m - (m-1) \leq k - m$. By substituting Ineq.~\eqref{bound_H_k_case_1} into Ineq.~\eqref{eq_10_2}, we complete the proof of Ineq.~\eqref{res_1_thm_2_case_1}.
\item Estimator II \eqref{evaluate_L_t_2_2}: It should be noted that Estimator II is always nonnegative, so $\bar \cL_t = \bar \cL_{t,+}$. First, due to the fact that Estimator II is more conservative than Estimator I, Ineq.~\eqref{bound_H_k_case_1} also holds for Estimator II as long as $b_t' \geq 1$. On the other hand, when $b_t' \geq \hat b \geq 2 \log_{1/\beta} (k+1)$, we can leverage Lemma~\ref{tail_assumption} to obtain a sharper lower bound:
\begin{align}\label{bound_H_k}
H_k &\geq  \bbe\left[\tsum_{t=1}^k I(t) (\bar \cL_t - \tfrac{L_t}{2})\|x_{t-1} - x_{t}\|^2  \cdot \mathbb{I}\{\bar \cL_t < \tfrac{L_t}{2}\}\right]\nn\\
&\geq  \bbe\left[\tsum_{t=1}^{k} t(\bar \cL_{t} -\tfrac{L_{t}}{2})\|x_{t-1} - x_{t}\|^2 \cdot \mathbb{I}\{\bar \cL_t < \tfrac{L_t}{2}\}  \right]\nn\\
&\geq  - \tfrac{LD_X^2}{2}\bbe\left[\tsum_{t=1}^{k} t\cdot \mathbb{I}\{\bar \cL_t < \tfrac{L_t}{2}\}  \right] \nn\\
& \overset{(i)}\geq - \tfrac{LD_X^2}{2}\tsum_{t=1}^{k} t\cdot \beta^{b_t'} \overset{(ii)}\geq -\tfrac{LD_X^2}{4},
\end{align}
where step (i) follows from Lemma~\ref{tail_assumption} and step (ii) follows from $b_t' \geq \hat b \geq 2 \log_{1/\beta} (k+1)$. By substituting Ineq.~\eqref{bound_H_k} into Ineq.~\eqref{eq_10_2} and invoking that $L \leq \hat \cL$, we complete the proof of Ineq.~\eqref{res_1_thm_2}.
\end{enumerate}

Finally, by substituting the two different batch size choices, i.e., $b_t =  \max\big\{ 1, \lceil(\tfrac{11I(t-1)}{8} + \tfrac{7}{8})\tfrac{\alpha}{\gamma_t}\rceil\big\}$ and $b_t =b$, into Ineqs.~\eqref{res_1_thm_2_case_1} and \eqref{res_1_thm_2}, respectively, we obtain the convergence results in Ineqs. \eqref{res_2_thm_2_case_1}, \eqref{res_0_thm_2_case_1}, \eqref{res_2_thm_2}, and \eqref{res_0_thm_2}.
\end{proof}}
}

\vgap

Theorem~\ref{thm_2} establishes the convergence guarantees for AC-SPG under the two different choices of local Lipschitz estimators. Now let us derive the corresponding iteration and sample complexities of AC-SPG in order to find an $\epsilon$-approximate stationary point in expectation, i.e., an $x_R \in X$, such that $\bbe[\|g_{X, R}\|^2] \leq \epsilon^2$.
{\color{black}
\begin{itemize}
    \item \textbf{Estimator I \eqref{evaluate_L_t_2}}: In this setup, AC-SPG achieves the convergence guarantees in Ineq.~\eqref{res_1_thm_2_case_1}. It should be noted that the term $\tfrac{kl D_X^2}{2}$ on the RHS is dominated by the term $\tfrac{(10+k-m)\hat\cL D_X^2}{2}$. Therefore, AC-SPG cannot explicitly exploit the weakly convex structure of the problem like the deterministic AC-PG method. The major cause of this issue is the stochasticity in the estimation of the local Lipschitz constant, which necessitates bounding $H_k$ as in \eqref{bound_H_k_case_1}. \revision{Intuitively, although we can apply the segmentation analysis as in the analysis of AC-PG, we cannot eliminate the troublesome iterations for free due to the additional stochastic terms, such as $H_k$, that require careful treatment.}{Intuitively, although we can apply the segmentation analysis as in the analysis of AC-PG, there are additional stochastic terms like $H_k$ introduced by the random output rule \eqref{random_output}-\eqref{def_W_t}, which require careful treatment.} 
    
    Firstly, we establish the complexity bounds by considering the adaptive batch size $b_t =  \max\big\{ 1, \lceil(\tfrac{11t}{8} - \tfrac{1}{2})\tfrac{\alpha}{\gamma_t}\rceil\big\}$ for some $\alpha > 0$. According to \eqref{res_2_thm_2_case_1}, AC-SPG can find an $\epsilon$-stationary point in at most\footnote{For simplicity, here we ignore the additive term of $\log\tfrac{\hat \cL}{\bar \cL_0}$ in the complexity bounds, as this term can be eliminated with an analysis without segmentation.
    }
\begin{align*}
\mathcal{O}\left(\tfrac{\hat \cL^2 D_X^2}{\epsilon^2} + \tfrac{\hat \cL \sigma^2}{\alpha \epsilon^2}\right)
\end{align*}
iterations. Consequently, the total sample complexity $\tsum_{t=1}^k b_t$ is bounded by
\begin{align*}
\mathcal{O}\left(\tfrac{\hat \cL^2 D_X^2}{\epsilon^2} + \tfrac{\hat \cL \sigma^2}{\alpha \epsilon^2}  + \tfrac{\alpha \hat \cL^4  D_X^4}{\bar \cL_0 \epsilon^4} + \tfrac{\hat \cL^2 \sigma^4}{\alpha \bar \cL_0 \epsilon^4}\right).
\end{align*}
By selecting $\alpha$ in the order of $\mathcal{O}(\tfrac{\sigma^2}{\hat \cL D_X^2})$ (assuming a reliable estimate of $\tfrac{\sigma^2}{\hat \cL D_X^2}$ is available), AC-SPG achieves an iteration complexity of 
\begin{align}\label{iteration_complexity_1}
\mathcal{O}\left( \tfrac{\hat \cL^2  D_X^2}{\epsilon^2}\right),
\end{align}
and a sample complexity of 
\begin{align}\label{sample_complexity_1}
\mathcal{O}\left(\tfrac{\hat \cL^2  D_X^2}{\epsilon^2}  + \tfrac{ \hat \cL^3  D_X^2 \sigma^2}{\bar \cL_0\epsilon^4}\right).
\end{align}
Here, the sample complexity's dependence  on $\bar \cL_0^{-1}$ arises from the conservative upper bounds $\gamma_t^{-1}= (4  \hat\cL_{t-1})^{-1} \leq (4\bar\cL_0)^{-1}$ for all $t \in [k]$. However, this bound can be further improved by leveraging the large deviation results in Section~\ref{sec_stoch_deviation}.

Secondly, consider the case where a fixed batch size $b_t = b$ is chosen, with $b \in \mathbb{Z}_+$; see \eqref{res_0_thm_2_case_1}. To guarantee the convergence of the algorithm, we set $b = \max\{1, \tau k \}$. Under this setting, AC-SPG achieves an iteration complexity of 
\begin{align*}
\mathcal{O}\left( \tfrac{\hat \cL^2 D_X^2}{\epsilon^2} + \tfrac{\hat \cL \sigma^2}{\bar \cL_0 \tau\epsilon^2} \right),
\end{align*}
and a sample complexity of
\begin{align*}
\mathcal{O}\left(\tfrac{\hat \cL^2 D_X^2}{\epsilon^2} + \tfrac{\hat \cL \sigma^2}{\bar \cL_0 \tau\epsilon^2} + \tfrac{\tau \hat cL^4  D_X^4}{\epsilon^4} + \tfrac{\hat \cL^2 \sigma^4}{\bar \cL_0^2\tau\epsilon^4}\right).
\end{align*}
By setting $\tau$ in the order of $\mathcal{O}\left(\tfrac{\sigma^2}{\hat \cL\bar \cL_0 D_X^2} \right)$, AC-SPG can achieve the same iteration complexity as in \eqref{iteration_complexity_1} and the sample complexity as in \eqref{sample_complexity_1}.

\item \textbf{Estimator II \eqref{evaluate_L_t_2_2}}: AC-SPG with Estimator II achieves the convergence guarantees in Ineq.~\eqref{res_1_thm_2}, which depends explicitly on the lower curvature $l$, similar to the deterministic AC-PG method. We achieve this improved convergence bound by applying the more conservative local Lipschitz estimator (Estimator II) with a large enough $b_t'$, i.e., $b_t' \geq \hat b \geq 2 \log_{1/\beta} (k+1)$. This local Lipschitz estimator allows us to properly bound the stochastic term $H_k$ as in \eqref{bound_H_k}, thus preserving the desired convergence rate. It should be noted that, though prior knowledge of $\beta$ is required here to achieve the desired unified convergence guarantees that depend on $l$, AC-SPG with Estimator II can converge under any choice of $b_t'\geq 1$, with the same worst-case guarantees as using Estimator I; see (b) in Theorem~\ref{thm_2}.

To establish the complexity bounds, we first consider the adaptive batch size $b_t = \max\left\{ 1, \lceil(\tfrac{11I(t-1)}{8} +\tfrac{7}{8})\tfrac{\alpha}{\gamma_t}\rceil\right\}$ with some $\alpha>0$. By \eqref{res_2_thm_2}, AC-SPG can find an $\epsilon$-stationary point in at most 
\begin{align*}
\mathcal{O}\left(\tfrac{\hat \cL D_X}{\epsilon} + \tfrac{\hat \cL l D_X^2}{\epsilon^2} + \tfrac{\hat \cL \sigma^2}{\alpha  \epsilon^2} +  \log\tfrac{\hat \cL}{\bar \cL_0}\right)
\end{align*}
iterations. Then the total sample complexity $\tsum_{t=1}^k (b_t + b_t')$ can be bounded by\footnote{For simplicity, here we ignore the dependence of $\log\tfrac{\hat \cL}{\bar \cL_0}$ in the complexity bounds.}
\begin{align*}
\widetilde{\mathcal{O}}\left(\tfrac{\hat \cL D_X}{(1-\beta)\epsilon} + \tfrac{\hat \cL l D_X^2}{(1-\beta)\epsilon^2} + \tfrac{\hat \cL \sigma^2}{(1-\beta)\alpha \epsilon^2} + \tfrac{\alpha \hat \cL^2 D_X^2}{\bar \cL_0\epsilon^2} + \tfrac{\alpha \hat \cL^2 l^2 D_X^4}{\bar \cL_0 \epsilon^4} + \tfrac{\hat \cL^2 \sigma^4}{\alpha \bar \cL_0 \epsilon^4}\right).
\end{align*}
Consider the case when $l>0$ and we choose $\alpha$ in the order of $\mathcal{O}(\min\{\tfrac{\sigma^2}{\epsilon D_X}, \tfrac{\sigma^2}{lD_X^2}\})$ (assuming we have a good enough estimation of $\tfrac{\sigma^2}{ D_X}$ and $\tfrac{\sigma^2}{lD_X^2}$), AC-SPG achieves the iteration complexity of 
\begin{align}\label{iteration_complexity_1_2}
\mathcal{O}\left(\tfrac{\hat \cL D_X}{\epsilon} + \tfrac{\hat \cL l D_X^2}{\epsilon^2}  +  \log\tfrac{\hat \cL}{\bar \cL_0}\right),
\end{align}
and the sample complexity of 
\begin{align}\label{sample_complexity_1_2}
\widetilde{\mathcal{O}}\left(\tfrac{\hat \cL D_X}{(1-\beta)\epsilon} + \tfrac{\hat \cL l D_X^2}{(1-\beta)\epsilon^2}  +  \tfrac{\hat \cL^2 D_X\sigma^2}{\bar \cL_0 \epsilon^3} + \tfrac{ \hat \cL^2 l D_X^2 \sigma^2}{\bar \cL_0\epsilon^4}\right).
\end{align}
The sample complexity nearly matches the complexity \eqref{complexity_bound_spg} for the SPG method (Algorithm~\ref{alg_2_0}), up to a multiplicative factor of $\tfrac{\hat \cL}{\bar \cL_0}$ and other logarithmic factors. 

Secondly, we consider the choice of a fixed batch size $b_t = b$ for $b \in \mathbb{Z}_+$. To guarantee the convegence of the algorithm, we set $b = \max\{1, \tau k \}$, then AC-SPG achieves the iteration complexity of 
\begin{align*}
\mathcal{O}\left(\tfrac{\hat \cL D_X}{\epsilon} + \tfrac{\hat \cL l D_X^2}{\epsilon^2} + \tfrac{\hat \cL \sigma^2}{\bar \cL_0 \tau\epsilon^2}  +  \log\tfrac{\hat \cL}{\bar \cL_0}\right),
\end{align*}
and the sample complexity of
\begin{align*}
\mathcal{O}\left(\tfrac{\hat \cL D_X}{(1-\beta)\epsilon} + \tfrac{\hat \cL l D_X^2}{(1-\beta)\epsilon^2} + \tfrac{\hat \cL \sigma^2}{(1-\beta)\bar \cL_0 \tau\epsilon^2} + \tfrac{\tau\hat \cL^2 D_X^2}{\epsilon^2} + \tfrac{\tau\hat \cL^2 l^2 D_X^4}{\epsilon^4} + \tfrac{\hat \cL^2 \sigma^4}{\bar \cL_0^2\tau\epsilon^4}\right).
\end{align*}
Moreover, if we choose $\tau$ in the order of $\mathcal{O}\left( \min\{\tfrac{\sigma^2}{\epsilon\bar \cL_0 D_X} ,\tfrac{\sigma^2}{l\bar \cL_0 D_X^2}\} \right)$, AC-SPG can also achive the iteration complexity of \eqref{iteration_complexity_1_2} and the sample complexity of \eqref{sample_complexity_1_2}.
\end{itemize}}

\revision{}{
Finally, we would like to point out that the convergence guarantees of AC-SPG rely on the smoothness assumption of the stochastic function (Assumption~\ref{assump_smooth_stoch}). However, under this assumption, the lower bound for sample complexity is $\mathcal{O}\left(\epsilon^{-3}\right)$, rather than $\mathcal{O}\left(\epsilon^{-4}\right)$; see the lower bounds in \cite{arjevani2023lower}.
This observation motivates the exploration of further acceleration techniques, particularly through variance reduction, as discussed in Section~\ref{sec_variance_reduction}.}

\subsection{A two-phase auto-conditioned stochastic gradient method with high probability guarantees}\label{sec_stoch_deviation}
In this subsection, we analyze the complexities of the AC-SPG algorithm in achieving an $(\epsilon, \delta)$-stationary solution, defined as $\mathbb{P}\left(\|g_{X, R}\| \leq \epsilon \right) \geq 1-\delta$.
In the previous subsection, we established the convergence guarantees for AC-SPG in expectation. These guarantees can be generalized to probability bounds using Markov's inequality,  resulting in polynomial dependences on $\delta$, e.g., $\mathcal{O}\left(\tfrac{1}{\epsilon^2 \delta} + \tfrac{1}{\epsilon^4 \delta^2} \right)$. Notably, in the convex setting, this dependence on $\delta$ can be improved to logarithmic factors under light-tail assumptions. However, in the nonconvex setting, it is generally unclear how to determine the output iterate in a single loop algorithm to achieve high-probability guarantees.

To address the aforementioned challenges, we propose a two-phase AC-SPG algorithm (2-AC-SPG), inspired by \cite{ghadimi2013stochastic,GhaLanZhang13-1}. The algorithm consists of two phases: an optimization phase and a post-optimization phase.  
In the optimization phase, we execute $\cR$ independent AC-SPG trials, generating a candidate solution for each trial. In the post-optimization phase, we select and output the solution with the smallest projected stochastic gradient (evaluated using new samples) among the $\cR$ candidate solutions.  
Notably, the 2-AC-SPG algorithm offers flexibility: we can pause the optimization phase to perform the post-optimization phase and then resume the optimization phase, provided we retain the necessary information for each AC-SPG trial in memory. Furthermore, the optimization phase can be implemented efficiently in parallel computing environments to enhance scalability.

\begin{algorithm}[H]\caption{A two-phase auto-conditioned stochastic projected gradient (2-AC-SPG) method}\label{alg_3}
	\begin{algorithmic}
		\State{\textbf{Input}: initial point $x_0 \in X$, number of runs $\cR$, number of iterations in each run $k$, the sample size $\cK$ in post-optimization phase, \revision{}{and a choice of the local Lipschitz estimator in either \eqref{evaluate_L_t_2} or \eqref{evaluate_L_t_2_2}.} 
  }
  \State{\textbf{Optimization phase}:\\
  \quad \textbf{for} $r=1,...,\cR$ \textbf{do}\\
  \qquad Run the AC-SPG algorithm (Algorithm~\ref{alg_2}) with initial point $x_0$ for $k$ iterations, \revision{batch size rule of either (a) or (b)}{with 
  a batch size rule in Theorem~\ref{thm_2},\\
  \qquad and random output rule in \eqref{random_output}.}\\
  \qquad Let $\bar x_r = x_{R_r}$ denote the output of this phase.\\
  \quad\textbf{end for}
}
\State{\textbf{Post-optimization phase}:\\
  \quad Choose a solution $\bar x$ from the candidate list $\{\bar x_1, ..., \bar x_{\cR} \}$ such that
  \begin{align}
    \|\bar g_X(\bar x)\| = \min_{r \in [\cR]}\|\bar g_X(\bar x_r)\| , \quad \text{where  }~\bar g_X(\bar x_r):= P_X(\bar x_r, \bar G_\cK(\bar x_r), \gamma_{R_r+1}), \label{def_bar_x}
  \end{align}
  \quad and $\bar G_\cK(x) := \tfrac{1}{\cK}\tsum_{j=1}^\cK \cG(x, \xi_j)$.
  }
  \State{\textbf{Output}: $\bar x$.}
	\end{algorithmic}
\end{algorithm}

For convenience, let $g_X(\bar x_r) := P_X(\bar x_r, \nabla f(\bar x_r), \gamma_{R_r + 1})$ for $r \in [R]$, and define $g_X(\bar x)$ analogously, where $\bar x$ is specified in \eqref{def_bar_x}. In the next theorem, we establish large deviation bounds for the 2-AC-SPG method without requiring any additional tail-bound assumptions. \revision{}{For simplicity, we consider the adaptive batch size $b_t =  \max\big\{ 1, \lceil(\tfrac{11I(t-1)}{8} + \tfrac{7}{8})\tfrac{\alpha}{\gamma_t}\rceil\big\}$, and the convergence results with the constant batch size $b_t = b$ can be derived in a similar manner.}

\begin{theorem}\label{thm_3}
Assume Assumptions \ref{assump_unbiasness}-\ref{assump_smooth_stoch} hold. Let $\bar x$ be the output solution generated by Algorithm~\ref{alg_3} \revision{}{with $b_t =  \max\big\{ 1, \lceil(\tfrac{11I(t-1)}{8} + \tfrac{7}{8})\tfrac{\alpha}{\gamma_t}\rceil\big\}$ for some $\alpha>0$, and $b_t'$ be chosen according to Theorem~\ref{thm_2}}. Then with probability at least $1 - (\tfrac{1}{2})^{\cR} - \tfrac{\cR}{\lambda}$, we have
\begin{align}
\|g_X(\bar x)\|^2\leq \tfrac{8Q_k}{M_k} + \tfrac{6\lambda \sigma^2}{\cK},
\end{align}
where \revision{$M_k:=\tsum_{t=2}^k\tfrac{t-1}{4\gamma_t}\geq \tfrac{(k+1)(k-1)}{16 \hat \cL_{k-1}}$}{$M_k:=\tsum_{i=1}^m \tsum_{t=2}^{s_i} (\tfrac{3t}{16}-\tfrac{1}{4}) \tfrac{1}{\gamma_t^{(i)}}
\geq \tfrac{(k-m)^2}{128 \hat \cL_{k-1}}$} and
\begin{align*}
Q_k := \begin{cases}
    \revision{(k+1)\hat \cL D_X^2+\tfrac{k \sigma^2}{\alpha}}{\tfrac{(10+k-m)\hat\cL D_X^2}{2} +  \tfrac{k\ell D_X^2}{2}+\tfrac{k \sigma^2}{\alpha}}, &\text{when }\revision{b_t = \max\{1, \lceil\tfrac{(3t-1)\alpha}{2\gamma_t}\rceil\}}{\text{using Estimator I \eqref{evaluate_L_t_2},}}\\
     \revision{(k+1)\hat \cL D_X^2+\tfrac{(3k+1)k\sigma^2}{8 b\bar \cL_0}}{\tfrac{21\hat\cL D_X^2}{4} +  \tfrac{k\ell D_X^2}{2}+\tfrac{k \sigma^2}{\alpha}}, &\text{when }\revision{b_t =b}{\text{using Estimator II \eqref{evaluate_L_t_2_2} with $b_t' \geq 2 \log_{1/\beta}(k+1)$}}.
\end{cases}
\end{align*}
Consequently, by choosing $\cR = \lceil \log_2 (\tfrac{2}{\delta})\rceil$ and $\lambda = \tfrac{2 \lceil \log_2 (\frac{2}{\delta})\rceil}{\delta}$, we obtain that, with probability at least $1-\delta$,
\begin{align}\label{thm_3_res_2}
\|g_X(\bar x)\|^2\leq \tfrac{8Q_k}{M_k} + \tfrac{12\lceil \log_2 (2/{\delta})\rceil \sigma^2}{\delta\cK}.
\end{align}
\end{theorem}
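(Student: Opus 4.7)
\textbf{Proof plan for Theorem~\ref{thm_3}.} The strategy is the standard two-phase scheme: use Markov's inequality on each of the $\cR$ independent optimization runs to produce at least one ``good'' candidate $\bar x_{r^*}$ with high probability, then control the extra error introduced by selecting $\bar x$ via the stochastic surrogate $\bar g_X$ in the post-optimization phase. First, I will invoke Theorem~\ref{thm_2}: for each $r\in[\cR]$, the $r$-th trial (with random output rule~\eqref{random_output}) satisfies $\bbe[ M_k^{(r)} \|g_X(\bar x_r)\|^2] \leq Q_k$, where $M_k^{(r)}$ denotes $\sum_{t=2}^k (t-1)/(4\gamma_t)$ along the $r$-th trajectory. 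Markov's inequality then gives $\mathbb{P}(\|g_X(\bar x_r)\|^2 > 2Q_k/M_k^{(r)}) \leq 1/2$, and independence of the trials yields
\begin{equation*}
\mathbb{P}\!\left(\exists\, r^* \in [\cR]:\; \|g_X(\bar x_{r^*})\|^2 \leq 2Q_k/M_k^{(r^*)}\right) \geq 1 - (1/2)^\cR.
\end{equation*}

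Next, for the post-optimization phase, define $e_r := \bar G_\cK(\bar x_r) - \nabla f(\bar x_r)$. Because the samples $\{\xi_j\}_{j=1}^\cK$ used to form $\bar G_\cK$ are independent of the optimization phase, Assumption~\ref{assump_variance} gives $\bbe[\|e_r\|^2]\le \sigma^2/\cK$ conditionally on $\bar x_r$. Summing over $r$ and applying Markov's inequality to $\max_r \|e_r\|^2$ gives $\mathbb{P}(\max_{r} \|e_r\|^2 > \lambda \sigma^2/\cK) \leq \cR/\lambda$. Intersecting this event with the one from Step~1 yields, with probability at least $1-(1/2)^\cR-\cR/\lambda$, the simultaneous existence of a ``good'' index $r^*$ and a uniformly small deviation $\max_r\|e_r\|^2\le \lambda\sigma^2/\cK$.

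To convert this into a bound on $\|g_X(\bar x)\|$, I will use the non-expansiveness of the prox-mapping in its linear argument, which implies $\|g_X(x)-\bar g_X(x)\|\le \|\bar G_\cK(x)-\nabla f(x)\|$ for any $x\in X$; this is essentially the same tool used after \eqref{def_projected_stochastic_gradient}. Letting $\bar r$ denote the post-optimization choice and applying $(a+b)^2\le 2a^2+2b^2$ twice together with the minimizing property $\|\bar g_X(\bar x)\|\le \|\bar g_X(\bar x_{r^*})\|$:
\begin{equation*}
\|g_X(\bar x)\|^2 \le 2\|\bar g_X(\bar x)\|^2 + 2\|e_{\bar r}\|^2 \le 2\|\bar g_X(\bar x_{r^*})\|^2 + 2\|e_{\bar r}\|^2 \le 4\|g_X(\bar x_{r^*})\|^2 + 4\|e_{r^*}\|^2 + 2\|e_{\bar r}\|^2.
\end{equation*}
Plugging in the two high-probability bounds gives $\|g_X(\bar x)\|^2 \le 8Q_k/M_k^{(r^*)} + 6\lambda\sigma^2/\cK$, and since $M_k^{(r^*)}\ge (k+1)(k-1)/(16\hat \cL_{k-1})$ the claimed bound follows. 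The second statement \eqref{thm_3_res_2} then falls out by choosing $\cR=\lceil\log_2(2/\delta)\rceil$ so that $(1/2)^\cR\le \delta/2$, and $\lambda=2\cR/\delta$ so that $\cR/\lambda=\delta/2$; the two failure probabilities sum to $\delta$.

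\textbf{Main obstacle.} The delicate point is that $M_k$ is itself random, so Markov's inequality cannot be applied directly to $\|g_X(\bar x_r)\|^2$; the remedy is to apply it to the product $M_k^{(r)}\|g_X(\bar x_r)\|^2$, which is precisely the quantity bounded in expectation by Theorem~\ref{thm_2}. A secondary subtlety is to ensure the post-optimization samples are independent of the optimization phase so the conditional variance bound $\bbe[\|e_r\|^2\mid \bar x_r]\le \sigma^2/\cK$ is justified, and to be careful that $\bar r$ is a (possibly) different random index than $r^*$ when applying the deviation bound to $e_{\bar r}$ (using the uniform max over $r$ absorbs this).
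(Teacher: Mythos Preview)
Your proposal is correct and follows essentially the same approach as the paper's proof: Markov's inequality applied to the product $M_k\|g_X(\bar x_r)\|^2$ on each independent trial, a union bound on the post-optimization deviations $\|e_r\|^2$, and the same two applications of $(a+b)^2\le 2a^2+2b^2$ together with nonexpansiveness of the prox-mapping to obtain the constants $8$ and $6$. Your explicit tracking of the random index $r^*$ versus the selected index $\bar r$, and your observation that $M_k$ is itself random so Markov must be applied to the product, are exactly the subtleties the paper handles (the paper writes $\min_r$ and $\max_r$ in place of your $r^*$ and $\bar r$, which is equivalent).
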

\begin{proof}
By the definition of $\bar x$ in \eqref{def_bar_x}, we have
\begin{align}
\|\bar g_X(\bar x)\|^2 &= \min_{r \in [\cR]}\|\bar g_X(\bar x_r)\|^2\nn\\
&\leq  \min_{r \in [\cR]} \left\{2\|\bar g_X(\bar x_r) - g_X(\bar x_r)\|^2 +  2\| g_X(\bar x_r)\|^2\right\}\nn\\
&\leq 2 \min_{r \in [\cR]}\| g_X(\bar x_r)\|^2 + 2 \max_{r \in [\cR]}\|\bar g_X(\bar x_r) - g_X(\bar x_r)\|^2.
\end{align}
Consequently, we have
\begin{align}
\|g_X(\bar x)\|^2 & \leq 2\|\bar g_X(\bar x)\|^2 + 2\|g_X(\bar x) - \bar g_X(\bar x)\|^2\nn\\
&\leq 4 \min_{r \in [\cR]}\| g_X(\bar x_r)\|^2 + 4 \max_{r \in [\cR]}\|\bar g_X(\bar x_r) - g_X(\bar x_r)\|^2 + 2\|g_X(\bar x) - \bar g_X(\bar x)\|^2\nn\\
& \leq 4 \min_{r \in [\cR]}\| g_X(\bar x_r)\|^2 + 6\max_{r \in [\cR]}\|\bar g_X(\bar x_r) - g_X(\bar x_r)\|^2.\label{thm_3_eq_1}
\end{align}
Next, we bound $\min_{r \in [\cR]}\| g_X(\bar x_r)\|^2$ and $\max_{r \in [\cR]}\|\bar g_X(\bar x_r) - g_X(\bar x_r)\|^2$ separately. To bound $\min_{r \in [\cR]}\| g_X(\bar x_r)\|^2$, first notice that for all $r \in [\cR]$,
\begin{align*}
    \bbe\left[M_k\|g_X(\bar x_r)\|^2\right] \leq Q_k, 
\end{align*}
\revision{}{which follows from Ineqs.~\eqref{res_1_thm_2_case_1} and \eqref{res_1_thm_2} in Theorem~\ref{thm_2} and the batch size $b_t =  \max\big\{ 1, \lceil(\tfrac{11I(t-1)}{8} + \tfrac{7}{8})\tfrac{\alpha}{\gamma_t}\rceil\big\}$.}
Then by Markov's inequality, we have
\begin{align*}
\mathbb{P}(\|g_X(\bar x_r)\|^2 \geq \tfrac{2Q_k}{M_k}) = \mathbb{P}(M_k\|g_X(\bar x_r)|^2 \geq 2Q_k) \leq \tfrac{\bbe\left[M_k\|g_X(\bar x_r)\|^2\right]}{2Q_k} \leq \tfrac{1}{2}.
\end{align*}
Then by the independence of epochs $r\in [\cR]$, we have
\begin{align}
\mathbb{P}\left(\min_{r\in [\cR]}\|g_X(\bar x_r)\|^2 \geq \tfrac{2Q_k}{M_k}\right) 
=\prod_{r=1}^{\cR}\mathbb{P}(\|g_X(\bar x_r)\|^2 \geq \tfrac{2Q_k}{M_k}) \leq \left(\tfrac{1}{2}\right)^\cR.\label{thm_3_eq_2}
\end{align}
Now let us bound $\max_{r \in [\cR]}\|\bar g_X(\bar x_r) - g_X(\bar x_r)\|^2$. For convenience, we denote $\delta_\cK(x):=\bar G_\cK(x) - \nabla f(x)$ where $\bar G_k(x)$ is defined in Algorithm~\ref{alg_3}. Then by the property of the projection, we have $\|\bar g_X(\bar x_r) - g_X(\bar x_r)\| \leq \|\delta_\cK(\bar x_r)\|$, thus
\begin{align*}
\bbe\|\bar g_X(\bar x_r) - g_X(\bar x_r)\|^2 \leq \bbe\|\delta_\cK(\bar x_r)\|^2 \leq \tfrac{\sigma^2}{\cK}. 
\end{align*}
Then by Markov's inequality, we have for any $\lambda > 0$,
\begin{align*}
\mathbb{P}\left(\|\bar g_X(\bar x_r) - g_X(\bar x_r)\|^2 \geq \tfrac{\lambda \sigma^2}{\cK} \right) \leq \tfrac{\bbe[\|\bar g_X(\bar x_r) - g_X(\bar x_r)\|^2] \cK }{\lambda \sigma^2}\leq \tfrac{1}{\lambda}.
\end{align*}
Using the uniform probability bound provides us with
\begin{align}
\mathbb{P}\left(\max_{r \in [\cR]}\|\bar g_X(\bar x_r) - g_X(\bar x_r)\|^2 \geq \tfrac{\lambda \sigma^2}{\cK}\right) \leq \tfrac{\cR}{\lambda}.\label{thm_3_eq_3}
\end{align}
By combining \eqref{thm_3_eq_1}, \eqref{thm_3_eq_2}, and \eqref{thm_3_eq_3}, we have
\begin{align}
\mathbb{P}\left(\| g_X(\bar x)\|^2 \leq \tfrac{8Q_k}{M_k} + \tfrac{6\lambda \sigma^2}{\cK}\right) &\geq \mathbb{P}\left(\min_{r\in [\cR]}\|g_X(\bar x_r)\|^2 \leq \tfrac{2Q_k}{M_k}, \max_{r \in [\cR]}\|\bar g_X(\bar x_r) - g_X(\bar x_r)\|^2 \leq\tfrac{\lambda \sigma^2}{\cK}\right)\nn\\
& \geq 1 - (\tfrac{1}{2})^{\cR} - \tfrac{\cR}{\lambda},
\end{align}
which completes the proof.
\end{proof}
\vgap

Utilizing  the convergence guarantees of 2-AC-SPG established in Theorem~\ref{thm_3}, we now derive the iteration and sample complexity bounds for obtaining an $(\epsilon, \delta)$-stationary solution, i.e., a solution satisfying $\mathbb{P}\left(\|g_{X, R}\| \leq \epsilon \right) \geq 1 - \delta$.

For simplicity of presentation, we focus on the case when \revision{the batch size is chosen as $b_t = \max\{1, \lceil\tfrac{(2t-1)\alpha}{\gamma_t}\rceil\}$}{using Estimator II \eqref{evaluate_L_t_2_2} with $b_t' \geq 2\log_{1/\beta}(k+1)$ and choosing $\alpha$ in the order of $\mathcal{O}(\min\{\tfrac{\sigma^2}{\epsilon D_X}, \tfrac{\sigma^2}{l D_X^2} \})$,} \revision{}{and the complexity bounds for Estimator I \eqref{evaluate_L_t_2} can be obtained in a similar manner.} By Theorem~\ref{thm_3},
at most \revision{$\mathcal{O}\left(\tfrac{\hat \cL^2  D_X^2}{\epsilon^2} + \tfrac{\hat \cL \sigma^2}{\alpha \epsilon^2} \right)$}{$\mathcal{O}\left(\tfrac{\hat \cL D_X}{\epsilon} + \tfrac{\hat \cL l D_X^2}{\epsilon^2}  +  \log\tfrac{\hat \cL}{\bar \cL_0}\right)$} iterations are required in each AC-SPG trial to ensure the term $\tfrac{8Q_k}{M_k}$ to converge to the $\epsilon^2$-level. Consequently, the total iteration complexity is given by
$$\revision{\mathcal{O}\left( \tfrac{\hat \cL^2 D_X^2\log \delta^{-1}}{\epsilon^2} + \tfrac{\hat \cL \sigma^2\log \delta^{-1}}{\alpha \epsilon^2}\right)}{\mathcal{O}\left(\tfrac{\hat \cL D_X \log \delta^{-1}}{\epsilon} + \tfrac{\hat \cL l D_X^2 \log \delta^{-1}}{\epsilon^2}  +  \log\tfrac{\hat \cL}{\bar \cL_0}\cdot\log \delta^{-1}\right)}.$$
Next, we analyze the total sample complexity, which includes the sample observations used in both the optimization and post-optimization phases. In the post optimization phase, at most $\mathcal{O}(\tfrac{\sigma^2 \log \delta^{-1}}{\delta \epsilon^2})$ samples are required to reduce the term $\tfrac{12\lceil \log_2 (2/{\delta})\rceil \sigma^2}{\delta\cK}$ to the $\epsilon^2$-level. For the optimization phase, the total number of samples used in each trial of AC-SPG is given by $\tsum_{t=1}^k (b_t + b_t')$, which can be upper bounded as follows:
\begin{align}
\tsum_{t=1}^k  \revision{\tfrac{(2t-1)\alpha}{\gamma_t}}{\left( b_t' + \max\big\{1, \lceil(\tfrac{11I(t-1)}{8} + \tfrac{7}{8})\tfrac{\alpha}{\gamma_t}\rceil\big\}\right)} =\widetilde{\mathcal{O}}( \tfrac{k}{1-\beta}+ \tsum_{i=2}^m \tfrac{s_{i-1}\alpha}{\gamma_t}+  \alpha M_k )  
= \widetilde{\mathcal{O}}(\tfrac{k}{1-\beta}+ \tfrac{k\alpha}{ \bar \cL_0} + \tfrac{\alpha Q_k}{\epsilon^2}  ).
\end{align}
As a result, \revision{}{by setting $\alpha$ in the order of $\mathcal{O}(\min\{\tfrac{\sigma^2}{\epsilon D_X}, \tfrac{\sigma^2}{lD_X^2}\})$, } the total sample complexity of 2-AC-SPG,
after hiding certain additive factors related to $\log \tfrac{\hat\cL}{\bar \cL_0}$ in $\widetilde{\mathcal{O}}(\cdot)$, can be bounded by
\begin{align*}
\revision{\mathcal{O}\left( \tfrac{\hat \cL^2 D_X^2\log \delta^{-1}}{\epsilon^2} + \tfrac{\hat \cL \sigma^2\log \delta^{-1}}{\alpha \epsilon^2}  + \tfrac{\alpha \hat \cL^3 D_X^4 \log \delta^{-1}}{\epsilon^4} + \tfrac{\hat \cL \sigma^4\log \delta^{-1}}{\alpha\epsilon^4} + \tfrac{\sigma^2 \log \delta^{-1}}{\delta \epsilon^2}\right).}{\widetilde{\mathcal{O}}\left(\tfrac{\hat \cL D_X \log \delta^{-1}}{(1-\beta)\epsilon} + \tfrac{\hat \cL l D_X^2\log \delta^{-1}}{(1-\beta)\epsilon^2} + \tfrac{\hat \cL \sigma^2 \log \delta^{-1}}{ \bar \cL_0 \epsilon^2} +  \tfrac{\hat \cL D_X\sigma^2\log \delta^{-1}}{ \epsilon^3} + \tfrac{ \hat \cL l D_X^2 \sigma^2\log \delta^{-1}}{\epsilon^4} + \tfrac{\sigma^2 \log \delta^{-1}}{\delta \epsilon^2}\right).}
\end{align*}
\revision{Clearly, by setting $\alpha$ to the order of $\mathcal{O}(\sigma^2/\hat \cL D_X^2)$, the iteration and sample complexities of 2-AC-SPG can be reduced to
$$\mathcal{O}\left( \tfrac{\hat \cL^2  D_X^2\log \delta^{-1}}{\epsilon^2}\right),$$
and 
\begin{align}\label{sample_complexity_3}
\mathcal{O}\left( \tfrac{\hat \cL^2  D_X^2\log \delta^{-1}}{\epsilon^2}   + \tfrac{\hat \cL^2 D_X^2 \sigma^2 \log \delta^{-1}}{\epsilon^4} + \tfrac{\sigma^2 \log \delta^{-1}}{\delta \epsilon^2}\right),
\end{align}
respectively.}{Clearly, this sample complexity improves
the dominating term by a factor of $\hat \cL/\bar \cL_0$ compared with the expectation bound in \eqref{sample_complexity_1_2}. }

\revision{
Notably, the iteration complexity can be improved by leveraging the concentration of $\bar\cL_t$ around $L_t$. Specifically, by Ineq.~\eqref{concentration_L_t}, choosing the batch size
\begin{align*}
b_t' \geq \mathcal{O}\left(\max\left\{ \tfrac{\hat \cL \log(2k/\delta)}{L}, \tfrac{\bar \cL^2 \log(2k/\delta)}{L^2}\right\}\right)
\end{align*}
ensures $\bar \cL_t \leq \mathcal{O}(L)$ for all $t \in [k]$ with probability at least $1-\tfrac{\delta}{2}$. 
Therefore, while this approach may potentially increase a few higher-order terms in sample complexity, it allows us to achieve the iteration complexity of
$$\mathcal{O}\left(\tfrac{L^2 D_X^2\log \delta^{-1}}{\epsilon^2}\right)$$
after ignoring some log-log factors. }{}

\vgap

Nevertheless, the sample complexity above still exhibits polynomial dependence on $\delta$. To reduce this dependence to a logarithmic factor, we introduce the light-tail assumption:  

\begin{assumption}[Light-tail assumption]\label{assump_light_tail}
    For any $x \in X$, we have
    \begin{align}
    \bbe_\xi\left[\exp\left(\tfrac{\|\cG(x, \xi) - 
    \nabla f(x)\|^2}{\sigma^2}\right)\right] \leq \exp(1). 
    \end{align}
\end{assumption}
Under this assumption, we introduce the following result (Lemma 4 in \cite{GhaLanZhang13-1}).
\begin{lemma}\label{lemma_light_tail}
Under Assumption \ref{assump_light_tail}, for any $\lambda \geq 0$ and any $\cK\in \mathbb{Z}_+$, we have 
\begin{align}
\mathbb{P}\left(\|\delta_\cK(x)\|^2 \geq \tfrac{2(1+\lambda)^2\sigma^2}{\cK} \right) \leq \exp(-\tfrac{\lambda^2}{3}),
\end{align}
where $\delta_\cK(x):=\bar G_\cK(x) - \nabla f(x)$, with $\bar G_k(x)$ defined in Algorithm~\ref{alg_3}.
\end{lemma}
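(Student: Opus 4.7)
The plan is to invoke the standard sub-Gaussian-type concentration bound for sums of independent random vectors, which is essentially identical to Lemma~4 of \cite{GhaLanZhang13-1} as already noted in the statement. I would first set up the reduction, then sketch the moment generating function (MGF) argument and the Chernoff step, rather than simply deferring to the reference.

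To begin, I would write $Z_i := \cG(x,\xi_i) - \nabla f(x)$ for $i=1,\ldots,\cK$, so that by Assumption~\ref{assump_unbiasness} the $Z_i$ are i.i.d. with $\bbe[Z_i]=0$, and by Assumption~\ref{assump_light_tail} they satisfy $\bbe[\exp(\|Z_i\|^2/\sigma^2)]\leq \exp(1)$. With $S_\cK := \sum_{i=1}^\cK Z_i$, we have $\delta_\cK(x) = S_\cK/\cK$, so the target probability becomes $\mathbb{P}(\|S_\cK\|^2 \geq 2(1+\lambda)^2\cK\sigma^2)$.

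The core of the proof is a sub-Gaussian MGF estimate of the form
\begin{equation*}
\bbe\bigl[\exp(\|S_\cK\|^2/(3\cK\sigma^2))\bigr] \leq \exp(1/3),
\end{equation*}
established by induction on $\cK$. For the inductive step, one conditions on $\mathcal{F}_{\cK-1} = \sigma(Z_1,\ldots,Z_{\cK-1})$, expands $\|S_\cK\|^2 = \|S_{\cK-1}\|^2 + 2\langle S_{\cK-1}, Z_\cK\rangle + \|Z_\cK\|^2$, and controls the cross term via the elementary inequality $\exp(y) \leq y + \exp(y^2)$ together with the directional sub-Gaussian bound $\bbe[\exp(\langle u,Z_\cK\rangle)] \leq \exp(c\|u\|^2\sigma^2)$ implied by the light-tail assumption. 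The zero-mean property makes the linear term vanish in expectation, leaving only quadratic contributions that can be absorbed into the constant $1/3$ with the right choice of parameters.

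With the MGF bound in hand, Markov's inequality applied at $t = 1/(3\cK\sigma^2)$ yields
\begin{equation*}
\mathbb{P}(\|S_\cK\|^2 \geq \tau) \leq \exp(1/3)\exp(-\tau/(3\cK\sigma^2)),
\end{equation*}
and substituting $\tau = 2(1+\lambda)^2\cK\sigma^2$ gives a bound of $\exp(1/3 - 2(1+\lambda)^2/3) \leq \exp(-\lambda^2/3)$, where the last inequality follows from $2(1+\lambda)^2 - 1 = 1 + 4\lambda + 2\lambda^2 \geq \lambda^2$ for $\lambda \geq 0$. The main obstacle is the inductive MGF estimate; since this calculation is carried out verbatim in \cite{GhaLanZhang13-1}, I would simply record the reduction above and cite that lemma rather than reproducing the full computation.
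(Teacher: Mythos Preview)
Your proposal is correct and aligns with the paper's own treatment: the paper does not prove this lemma at all but simply introduces it as ``Lemma~4 in \cite{GhaLanZhang13-1}'' and states the result. Your decision to record the reduction and cite that reference is therefore exactly what the paper does, with the bonus that you sketch the underlying MGF-plus-Chernoff argument and verify the final numerical inequality $2(1+\lambda)^2 - 1 \geq \lambda^2$; this is more detail than the paper itself supplies.
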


The following proposition describes the improved convergence results. 
\begin{proposition}\label{prop_1}
Suppose Assumptions~\ref{assump_unbiasness}-\ref{assump_light_tail} hold and let $\{\bar x\}$ be generated by Algorithm~\ref{alg_3}. With probability at least $1 - (\tfrac{1}{2})^{\cR} - \cR \exp(-\tfrac{\lambda^2}{3})$, we have
\begin{align}
\|g_X(\bar x)\|^2 \leq \tfrac{8Q_k}{M_k} + \tfrac{12 (1+\lambda)^2 \sigma^2}{\cK},
\end{align}
where $M_k$ and $Q_k$ are defined in Theorem~\ref{thm_3}. 
Consequently, by setting $\cR = \lceil \log_2 (\tfrac{2}{\delta})\rceil$ and $\lambda = \sqrt{3 \log \tfrac{(2\lceil \log_2 (\frac{2}{\delta})\rceil}{\delta}}$, with probability at least $1-\delta$, we have 
\begin{align}\label{prop_1_eq_2}
\|g_X(\bar x)\|^2 \leq \tfrac{8Q_k}{M_k} + 12\left(1+\sqrt{3 \log \tfrac{2\lceil \log_2 (\frac{2}{\delta})\rceil}{\delta}}\right)^2 \cdot \tfrac{ \sigma^2}{\cK}.
\end{align}
\end{proposition}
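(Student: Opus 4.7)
The plan is to mirror the two-phase argument used in the proof of Theorem~\ref{thm_3}, replacing the Markov-based tail control on the post-optimization estimator by the exponential tail bound supplied by Lemma~\ref{lemma_light_tail}. The key decomposition I would reuse is
\[
\|g_X(\bar x)\|^2 \;\le\; 4 \min_{r\in[\cR]} \|g_X(\bar x_r)\|^2 \;+\; 6 \max_{r\in[\cR]} \|\bar g_X(\bar x_r) - g_X(\bar x_r)\|^2,
\]
which follows from two applications of $\|u+v\|^2 \le 2\|u\|^2 + 2\|v\|^2$ together with the optimality of $\bar x$ in \eqref{def_bar_x}, exactly as in \eqref{thm_3_eq_1}. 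So I only need high-probability control on the two terms on the right.

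For the first term, nothing changes: Theorem~\ref{thm_2} gives $\bbe[M_k\|g_X(\bar x_r)\|^2] \le Q_k$ for each independent trial, so Markov's inequality gives $\mathbb{P}(\|g_X(\bar x_r)\|^2 \ge 2Q_k/M_k) \le 1/2$, and the independence of the $\cR$ trials yields
\[
\mathbb{P}\Bigl(\min_{r\in[\cR]}\|g_X(\bar x_r)\|^2 \ge \tfrac{2Q_k}{M_k}\Bigr) \le (\tfrac{1}{2})^{\cR},
\]
as in \eqref{thm_3_eq_2}. For the second term, I would apply Lemma~\ref{lemma_light_tail} (which uses Assumption~\ref{assump_light_tail}): since the post-optimization samples are independent of $\bar x_r$, for each $r\in[\cR]$,
\[
\mathbb{P}\Bigl(\|\bar g_X(\bar x_r) - g_X(\bar x_r)\|^2 \ge \tfrac{2(1+\lambda)^2 \sigma^2}{\cK}\Bigr) \le \mathbb{P}\Bigl(\|\delta_\cK(\bar x_r)\|^2 \ge \tfrac{2(1+\lambda)^2 \sigma^2}{\cK}\Bigr) \le \exp(-\tfrac{\lambda^2}{3}),
\]
where the first inequality uses the nonexpansiveness of the projection (as in the proof of Theorem~\ref{thm_3}). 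A union bound over $r\in[\cR]$ then gives $\mathbb{P}(\max_r \|\bar g_X(\bar x_r) - g_X(\bar x_r)\|^2 \ge 2(1+\lambda)^2\sigma^2/\cK) \le \cR \exp(-\lambda^2/3)$.

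Combining these two tail bounds via a further union bound produces the first conclusion: with probability at least $1 - (\tfrac{1}{2})^{\cR} - \cR \exp(-\lambda^2/3)$,
\[
\|g_X(\bar x)\|^2 \le 4\cdot \tfrac{2Q_k}{M_k} + 6 \cdot \tfrac{2(1+\lambda)^2 \sigma^2}{\cK} = \tfrac{8Q_k}{M_k} + \tfrac{12(1+\lambda)^2 \sigma^2}{\cK}.
\]
For \eqref{prop_1_eq_2}, the calibration is routine: $\cR = \lceil \log_2(2/\delta)\rceil$ forces $(\tfrac{1}{2})^{\cR} \le \delta/2$, and the choice $\lambda = \sqrt{3\log(2\lceil\log_2(2/\delta)\rceil/\delta)}$ makes $\cR\exp(-\lambda^2/3) = \delta/2$, so the total failure probability is at most $\delta$, and substitution of this $\lambda$ into the first bound yields the stated inequality. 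There is no real obstacle here beyond carefully reusing the structure of Theorem~\ref{thm_3}; the only substantive change is swapping Markov's bound on $\|\delta_\cK\|^2$ for the sub-Gaussian-style bound from Lemma~\ref{lemma_light_tail}, which is precisely what converts the polynomial dependence on $\delta$ into the logarithmic dependence visible in \eqref{prop_1_eq_2}.
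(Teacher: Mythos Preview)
Your proposal is correct and follows essentially the same approach as the paper's own proof: reuse the decomposition \eqref{thm_3_eq_1} and the bound \eqref{thm_3_eq_2} from Theorem~\ref{thm_3}, then replace the Markov-based tail bound on $\|\bar g_X(\bar x_r)-g_X(\bar x_r)\|^2$ by Lemma~\ref{lemma_light_tail} and a union bound over $r\in[\cR]$. The calibration of $\cR$ and $\lambda$ for the second part is also identical to the paper's intended argument.
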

\begin{proof}
First, note that Ineqs.~\eqref{thm_3_eq_1} and \eqref{thm_3_eq_2} still hold. Meanwhile, by the property of projection and Lemma~\ref{lemma_light_tail}, we have 
\begin{align*}
\mathbb{P}\left(\|\bar g_X(\bar x_r) - g_X(\bar x_r)\|^2 \geq \tfrac{2(1+\lambda)^2 \sigma^2}{\cK} \right) \leq \mathbb{P}\left(\|\delta_\cK(\bar x_r)\|^2 \geq \tfrac{2(1+\lambda)^2 \sigma^2}{\cK} \right) \leq \exp(-\tfrac{\lambda^2}{3}).
\end{align*}
Using the uniform probability bound, we then derive
\begin{align}
\mathbb{P}\left(\max_{r \in [\cR]}\|\bar g_X(\bar x_r) - g_X(\bar x_r)\|^2 \geq \tfrac{2(1+\lambda)^2 \sigma^2}{\cK}\right) \leq \cR \exp(-\tfrac{\lambda^2}{3}).\label{prop_1_eq_1}
\end{align}
Combining the inequality above with \eqref{thm_3_eq_1} and \eqref{thm_3_eq_2}, we obtain
\begin{align}
\mathbb{P}\left(\| g_X(\bar x)\|^2 \leq \tfrac{8Q_k}{M_k} + \tfrac{12(1+\lambda)^2 \sigma^2}{\cK}\right) &\geq \mathbb{P}\left(\min_{r\in [\cR]}\|g_X(\bar x_r)\|^2 \leq \tfrac{2Q_k}{M_k}, \max_{r \in [\cR]}\|\bar g_X(\bar x_r) - g_X(\bar x_r)\|^2 \leq\tfrac{2(1+\lambda)^2 \sigma^2}{\cK}\right)\nn\\
& \geq 1 - (\tfrac{1}{2})^{\cR} - \cR \exp(-\tfrac{\lambda^2}{3}).
\end{align}
This completes the proof.
\end{proof}
\vgap

In light of Proposition \ref{prop_1}, the polynomial dependence on $\delta$ in Theorem~\ref{thm_3} is improved to a logarithmic dependence. Specifically, in the post-optimization phase, ignoring log-log factors, at most $\mathcal{O}(\tfrac{\sigma^2 \log \delta^{-1}}{\epsilon^2})$ samples are needed to reduce the second term on the right-hand side of Ineq.~\eqref{prop_1_eq_2} to the $\epsilon^2$-level. Consequently, 2-AC-SPG achieves an improved sample complexity of  
\revision{
\begin{align*}  
\mathcal{O}\left(\tfrac{\hat \cL^2 D_X^2 \log \delta^{-1}}{\epsilon^2} + \tfrac{\hat \cL \sigma^2 \log \delta^{-1}}{\alpha \epsilon^2} + \tfrac{\alpha \hat \cL^3 D_X^4 \log \delta^{-1}}{\epsilon^4} + \tfrac{\hat \cL \sigma^4 \log \delta^{-1}}{\alpha \epsilon^4} + \tfrac{\sigma^2 \log \delta^{-1}}{\epsilon^2}\right).  
\end{align*}  
When $\alpha$ is chosen to be of the order $\mathcal{O}(\tfrac{\sigma^2}{\hat \cL D_X^2})$, the sample complexity simplifies to  
\begin{align*}  
\mathcal{O}\left(\tfrac{\hat \cL^2 D_X^2 \log \delta^{-1}}{\epsilon^2} + \tfrac{\hat \cL^2 D_X^2 \sigma^2 \log \delta^{-1}}{\epsilon^4} + \tfrac{\sigma^2 \log \delta^{-1}}{\epsilon^2}\right).  
\end{align*} }{\begin{align*}
&\widetilde{\mathcal{O}}\left(\tfrac{\hat \cL D_X \log \delta^{-1}}{(1-\beta)\epsilon} + \tfrac{\hat \cL l D_X^2\log \delta^{-1}}{(1-\beta)\epsilon^2} + \tfrac{\hat \cL \sigma^2 \log \delta^{-1}}{ \bar \cL_0 \epsilon^2} +  \tfrac{\hat \cL D_X\sigma^2\log \delta^{-1}}{  \epsilon^3} + \tfrac{ \hat \cL l D_X^2 \sigma^2\log \delta^{-1}}{\epsilon^4} + \tfrac{\sigma^2 \log \delta^{-1}}{\epsilon^2}\right).
\end{align*}}

\section{Variance reduction methods for nonconvex stochastic optimization}\label{sec_variance_reduction}

This section leverages variance reduction techniques to further speed up the convergence rate of the projected gradient method under Assumption~\ref{assump_smooth_stoch}. In Subsection~\ref{subsec_vr_nonadaptive}, we introduce a variance-reduced stochastic projected gradient (VR-SPG) method and present a new complexity result that unifies both convex and nonconvex cases for the first time in the literature. In Subsection~\ref{subsec_vr_adaptive}, we propose an ``auto-conditioned'' step-size policy for VR-SPG and provide improved convergence guarantees compared to the AC-SPG method discussed in Subsection~\ref{sec_stoch_expectation}.

\subsection{Variance-reduced stochastic projected gradient method}\label{subsec_vr_nonadaptive}
In Algorithm~\ref{alg_vr}, we  present a variance-reduced stochastic projected gradient (VR-SPG) method. VR-SPG computes a batched gradient estimator $\tilde G_t$, using a sample size $N$ at every $T$ iterations. During the intermediate iterations, the gradient estimator $\tilde G_t$ will be updated in a recursive manner based on $\tilde G_{t-1}$, using a mini-batch of size $b_t$. 
\begin{algorithm}[H]\caption{Variance-reduced stochastic projected gradient (VR-SPG) method}\label{alg_vr}
	\begin{algorithmic}
		\State{\textbf{Input}: initial point $x_0 \in X$, 
        step size parameter $\{\gamma_t\}$, batch size parameters $\{b_t\}$ and $N$, epoch length parameter $T$,
        and epoch index $s=0$. 
  }
		\For{$t=1,\cdots, k$}
        \If{$\mathrm{mod}(t, T) = 1$}
        \State{Set $s \leftarrow s+1$.}
        \State{Generate i.i.d. observations $\xi^t_1, ..., \xi^t_N$ 
        and compute
        \begin{align}
        \tilde G_t = \tfrac{1}{N}\tsum_{i=1}^N \cG(x_{t-1}, \xi^t_i).
        \end{align}
        }
        \Else
        \State{Generate i.i.d. observations $\xi_1^t, ..., \xi_{b_t}^{t}$ and compute
        \begin{align}
        \tilde G_t = \tfrac{1}{b_t}\tsum_{i=1}^{b_t}[\cG(x_{t-1}, \xi_i^t) - \cG(x_{t-2}, \xi_i^t)] + \tilde G_{t-1}.
        \end{align}
        }
        \EndIf
        \State{Compute
       \begin{align}
  x_t = \arg \min_{x\in X} \left\{\langle \tilde G_t , x \rangle + \tfrac{\gamma_t}{2}\|x_{t-1} - x\|^2 \right\}. \label{prox-mapping_3}
  \end{align} 
        } 
\EndFor
	\end{algorithmic}
\end{algorithm}

We define the projected stochastic gradient at the iterate $x_t$ as
\begin{align}\label{def_projected_stochastic_gradient_vr}
\tilde g_{X, t}:= P_X(x_t, \tilde G_{t+1}, \gamma_{t+1}) = \gamma_{t+1}(x_{t} - x_{t+1}),
\end{align}
and recall the corresponding projected gradient defined in \eqref{def_projected_deterministic_gradient}.
In the following discussion, we use two equivalent indexing conventions interchangeably:
\begin{align}\label{def_index_rule}
x_t = x_{s, u}, \quad \text{where $t = (s-1) T + u$.}
\end{align}
Here $s$ denotes the epoch index as defined in Algorithm~\ref{alg_vr}, and $u$ represents the index within the epoch. Additionally, We  have $x_{s, 0} = x_{s-1, T}$. For convenience, we define $\tilde \delta_t := \tilde G_t - \nabla f(x_{t-1})$. The following two lemmas outline the properties of the operator $\tilde G_t$. 

\begin{lemma}\label{bound_error_variance}
If Assumptions~\ref{assump_unbiasness}-\ref{assump_smooth_stoch} hold, we have
\begin{align*}
\bbe\|\tilde \delta_{s, u}\|^2 \leq \tsum_{j=2}^u \bbe\left[\tfrac{\bar \cL^2}{b_{s,j}}\|x_{s, j-1} - x_{s, j-2}\|^2\right] + \tfrac{\sigma^2}{N},
\end{align*}
where $\bar \cL$ is defined in Assumption~\ref{assump_smooth_stoch}.
\end{lemma}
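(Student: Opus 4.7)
The plan is to exploit the telescoping structure of the recursive estimator and apply a martingale-style argument that kills the cross term in the expansion of $\|\tilde\delta_{s,u}\|^2$. Writing $t = (s-1)T + u$, I first use the recursive update in the inner iterations ($u\ge 2$) to express
\begin{align*}
\tilde\delta_{s,u} &= \tilde G_{s,u} - \nabla f(x_{s,u-1}) \\
&= \tilde\delta_{s,u-1} + \tfrac{1}{b_{s,u}}\tsum_{i=1}^{b_{s,u}} D_i,
\end{align*}
where the ``innovation'' is
\begin{align*}
D_i := \bigl[\cG(x_{s,u-1},\xi_i^t) - \cG(x_{s,u-2},\xi_i^t)\bigr] - \bigl[\nabla f(x_{s,u-1}) - \nabla f(x_{s,u-2})\bigr].
\end{align*}

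Next, I would condition on the filtration $\cF_{t-1}$ up to the end of the previous iteration. Under this conditioning, $\tilde\delta_{s,u-1}$ and the iterates $x_{s,u-1}, x_{s,u-2}$ are measurable, while the fresh samples $\{\xi_i^t\}_{i=1}^{b_{s,u}}$ are i.i.d.\ and independent of the past. By Assumption~\ref{assump_unbiasness}, each $D_i$ has conditional mean zero, so the cross term $\langle \tilde\delta_{s,u-1}, \tfrac{1}{b_{s,u}}\sum_i D_i\rangle$ vanishes in conditional expectation. This leaves
\begin{align*}
\bbe\bigl[\|\tilde\delta_{s,u}\|^2\mid\cF_{t-1}\bigr] = \|\tilde\delta_{s,u-1}\|^2 + \tfrac{1}{b_{s,u}^2}\tsum_{i=1}^{b_{s,u}}\bbe\bigl[\|D_i\|^2\mid\cF_{t-1}\bigr].
\end{align*}
Using $\bbe\|X - \bbe X\|^2 \le \bbe\|X\|^2$ together with the uniform stochastic smoothness bound \eqref{smoothness_of_stohcastic_gradient_0} from Assumption~\ref{assump_smooth_stoch} and $\bbe[\cL(\xi)^2]\le \bar\cL^2$, the single-sample variance is bounded by $\bar\cL^2\|x_{s,u-1}-x_{s,u-2}\|^2$, yielding
\begin{align*}
\bbe\|\tilde\delta_{s,u}\|^2 \le \bbe\|\tilde\delta_{s,u-1}\|^2 + \bbe\!\left[\tfrac{\bar\cL^2}{b_{s,u}}\|x_{s,u-1}-x_{s,u-2}\|^2\right].
\end{align*}

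Finally, I would unroll this one-step recursion from $u$ down to $1$ within the current epoch. The base case $u=1$ corresponds to the large-batch restart $\tilde G_{s,1} = \frac{1}{N}\sum_{i=1}^N \cG(x_{s,0},\xi_i^t)$; since these $N$ samples are i.i.d.\ with the variance bound of Assumption~\ref{assump_variance}, we get $\bbe\|\tilde\delta_{s,1}\|^2 \le \sigma^2/N$. Summing the per-step increments from $j=2$ to $u$ and adding this base term produces exactly the claimed inequality. The only delicate point is ensuring that the conditional-independence argument is set up correctly so that the inner product term truly vanishes under expectation, but this is immediate from the fact that the fresh mini-batch at step $t$ is independent of $\cF_{t-1}$; the rest of the proof is a mechanical telescoping.
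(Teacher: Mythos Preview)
Your proposal is correct and follows essentially the same route as the paper: decompose $\tilde\delta_{s,u}=\tilde\delta_{s,u-1}+\tfrac{1}{b_{s,u}}\sum_i D_i$, use conditional independence of the fresh mini-batch to kill the cross term and the cross terms among different $D_i$, bound each $\bbe\|D_i\|^2$ via $\bbe[\cL(\xi)^2]\le\bar\cL^2$, and recurse down to the base case $\bbe\|\tilde\delta_{s,1}\|^2\le\sigma^2/N$. The only cosmetic difference is that you phrase the argument through an explicit filtration $\cF_{t-1}$, whereas the paper writes out the chain of (in)equalities directly.
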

\begin{proof}
When $u=1$ the result follows directly. For $u \geq 2$, we can bound
\begin{align*}
\bbe\|\tilde \delta_{s, u}\|^2 & = \bbe\|\tfrac{1}{b_{s, u}}\tsum_{i=1}^{b_{s, u}}[\cG(x_{s, u-1}, \xi_i^{s, u}) - \cG(x_{s, u-2}, \xi_i^{s, u})] - \nabla f(x_{s, u-1}) + \nabla f(x_{s, u-2}) +\tilde \delta_{s, u-1}\|^2\\
& = \bbe\|\tfrac{1}{b_{s, u}}\tsum_{i=1}^{b_{s, u}}[\cG(x_{s, u-1}, \xi_i^{s, u}) - \cG(x_{s, u-2}, \xi_i^{s, u})] - \nabla f(x_{s, u-1}) + \nabla f(x_{s, u-2})\|^2 + \bbe \|\tilde \delta_{s, u-1}\|^2\\
& = \bbe\left[\tfrac{1}{b^2_{s, u}} \tsum_{i=1}^{b_{s, u}}\|\cG(x_{s, u-1}, \xi_i^{s, u}) - \cG(x_{s, u-2}, \xi_i^{s, u}) - \nabla f(x_{s, u-1}) + \nabla f(x_{s, u-2})\|^2\right] + \bbe \|\tilde \delta_{s, u-1}\|^2\\
& \leq \bbe\left[\tfrac{1}{b^2_{s, u}} \tsum_{i=1}^{b_{s, u}}\|\cG(x_{s, u-1}, \xi_i^{s, u}) - \cG(x_{s, u-2}, \xi_i^{s, u})\|^2\right] + \bbe \|\tilde \delta_{s, u-1}\|^2\\
& \leq \bbe\left[\tfrac{1}{b^2_{s, u}} \tsum_{i=1}^{b_{s, u}}\cL^2(\xi_i^{s, u})\|x_{s, u-1} - x_{s, u-2}\|^2\right] + \bbe \|\tilde \delta_{s, u-1}\|^2\\
& \leq \bbe\left[\tfrac{\bar \cL^2}{b_{s, u}} \|x_{s, u-1} - x_{s, u-2}\|^2\right] + \bbe \|\tilde \delta_{s, u-1}\|^2,
\end{align*}
where the last two inequalities follow from Assumption~\ref{assump_smooth_stoch}. By utilizing the above inequality recursively, we obtain the desired result. 
\end{proof}

\begin{lemma}\label{bound_error_inner_product}
If Assumptions~\ref{assump_unbiasness}-\ref{assump_variance} hold, for any $u \geq 1$ and any sequence of positive real numbers $\{\beta_{s, j}\}_{j=1}^{u-1}$, we have 
\begin{align*}
\bbe[\langle \tilde\delta_{s, u},  x^* - x_{s, u-1}\rangle] \leq \tsum_{j=2}^{u} \bbe[\tfrac{1}{2\beta_{s, j-1}}\|x_{s, j-1}- x_{s, j-2}\|^2] + \tsum_{j=2}^u\bbe[\tfrac{\beta_{s, j-1}}{2}\|\tilde \delta_{s, j-1} \|^2].
\end{align*}
\end{lemma}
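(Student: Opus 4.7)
The plan is to decompose $\tilde\delta_{s,u}$ recursively through its one-step difference $\tilde\delta_{s,u}-\tilde\delta_{s,u-1}$ and exploit a martingale argument. For $u\ge 2$, the VR-SPG update gives
$$\tilde\delta_{s,u}-\tilde\delta_{s,u-1}=\tfrac{1}{b_{s,u}}\tsum_{i=1}^{b_{s,u}}\bigl[\cG(x_{s,u-1},\xi_i^{s,u})-\cG(x_{s,u-2},\xi_i^{s,u})-\nabla f(x_{s,u-1})+\nabla f(x_{s,u-2})\bigr],$$
which I will call $E_{s,u}$. Under Assumption~\ref{assump_unbiasness}, $\bbe[E_{s,u}\mid\cF_{s,u-1}]=0$, where $\cF_{s,u-1}$ is the $\sigma$-algebra generated by all samples collected up through the computation of $x_{s,u-1}$. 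Since $x^*-x_{s,u-1}$ is $\cF_{s,u-1}$-measurable, the tower property yields $\bbe[\langle E_{s,u},x^*-x_{s,u-1}\rangle]=0$.

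Consequently, $\bbe[\langle\tilde\delta_{s,u},x^*-x_{s,u-1}\rangle]=\bbe[\langle\tilde\delta_{s,u-1},x^*-x_{s,u-1}\rangle]$. I would then split $x^*-x_{s,u-1}=(x^*-x_{s,u-2})+(x_{s,u-2}-x_{s,u-1})$ and apply Young's inequality with parameter $\beta_{s,u-1}$ to the second piece, bounding $\bbe[\langle\tilde\delta_{s,u-1},x_{s,u-2}-x_{s,u-1}\rangle]$ by $\bbe\bigl[\tfrac{1}{2\beta_{s,u-1}}\|x_{s,u-1}-x_{s,u-2}\|^2+\tfrac{\beta_{s,u-1}}{2}\|\tilde\delta_{s,u-1}\|^2\bigr]$.

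The residual term $\bbe[\langle\tilde\delta_{s,u-1},x^*-x_{s,u-2}\rangle]$ is treated by the same identity with $u$ replaced by $u-1$: the cross term $\bbe[\langle E_{s,u-1},x^*-x_{s,u-2}\rangle]$ again vanishes by martingale measurability, collapsing the residual to $\bbe[\langle\tilde\delta_{s,u-2},x^*-x_{s,u-2}\rangle]$. Iterating this recursion down to $\bbe[\langle\tilde\delta_{s,1},x^*-x_{s,0}\rangle]$, which equals zero since $\tilde G_{s,1}$ is built from a fresh independent mini-batch of size $N$ so that $\bbe[\tilde\delta_{s,1}\mid\cF_{s,0}]=0$ while $x^*-x_{s,0}$ is $\cF_{s,0}$-measurable, produces exactly the telescoping bound claimed.

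The main delicacy is the bookkeeping of measurability: at each stage of the recursion I must verify that the ``new noise'' $E_{s,j}$ is conditionally mean-zero given the filtration against which both $x_{s,j-1}$ and $\tilde\delta_{s,j-1}$ are measurable, so that the martingale cancellations occur at every step. Once this is set up, the inequality is obtained by summing the $u-1$ Young bounds, one for each index $j=2,\ldots,u$.
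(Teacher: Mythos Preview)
Your proposal is correct and is essentially identical to the paper's proof: the paper also uses the one-step identity $\bbe[\langle\tilde\delta_{s,u},x^*-x_{s,u-1}\rangle]=\bbe[\langle\tilde\delta_{s,u-1},x^*-x_{s,u-1}\rangle]$ (obtained by conditioning out the fresh noise $E_{s,u}$), then splits $x^*-x_{s,u-1}=(x_{s,u-2}-x_{s,u-1})+(x^*-x_{s,u-2})$, applies Young's inequality to the first piece, and recurses on the second down to the base case $\bbe[\langle\tilde\delta_{s,1},x^*-x_{s,0}\rangle]=0$. Your measurability remarks are exactly the ingredients needed to justify each conditioning step.
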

\begin{proof}
It is not difficult to see that when $u=1$,
\begin{align}\label{recursion_zero}
\bbe[\langle \tilde\delta_{s, 1}, x^* - x_{s, 0}\rangle] = 0.
\end{align}
For $u \geq 2$, we have
\begin{align*}
&\bbe[\langle \tilde\delta_{s, u}, x^* - x_{s, u-1} \rangle]\nn\\
&= \bbe[\langle \nabla f(x_{s, u-1}) - \nabla f(x_{s, u-2}) + \tilde G_{s, u-1} - \nabla f(x_{s, u-1}),  x^* - x_{s, u-1}\rangle ]\nn\\
& = \bbe[\langle \tilde \delta_{s, u-1},  x_{s, u-2} - x_{s, u-1}\rangle] + \bbe[\langle \tilde \delta_{s, u-1},  x^* -  x_{s, u-2}\rangle]\nn\\
& \overset{(i)}\leq \bbe[\tfrac{1}{2\beta_{s, u-1}}\|x_{s, u-1} - x_{s, u-2}\|^2] + \bbe[\tfrac{\beta_{s, u-1}}{2}\|\tilde \delta_{s, u-1}\|^2]+ \bbe[\langle \tilde \delta_{s, u-1},  x^* - x_{s, u-2} \rangle],
\end{align*}
where step (i) follows from Young's inequality.
By applying the above relationship recursively and using the fact \eqref{recursion_zero}, we obtain the desired result. 
\end{proof}
\vgap


We are now ready to establish the main convergence results of VR-SPG. 
\begin{theorem}\label{thm_6}
Suppose that Assumptions \ref{assump_unbiasness}-\ref{assump_smooth_stoch} hold. Let $\{x_t\}$ be generated by Algorithm~\ref{alg_vr}, and assume $k\geq T$. Set $\gamma_t = \gamma \geq 4 \bar\cL$ and define the batch size as:
\begin{align}\label{batch_size_vr}
b_{s, u} := \begin{cases}
    \tfrac{T^2}{u-1}, &\text{for $s=1$}\\
    \tfrac{13 T}{2}, &\text{for $s\geq 2$}.
\end{cases}
\end{align}
Let the output $x_{R(k)}$ be chosen randomly from $\{x_1, ..., x_{k-1}\}$ according to the following probability mass function:
\begin{align}\label{random_output_vr}
\mathbb{P}(x_{R(k)} = x_{t-1}) = t\left(\tsum_{j=1}^k j\right)^{-1}.
\end{align}
Then we have
\begin{align}\label{res_thm6_1}
& \bbe\left[ \|g_{X, R(k)}\|^2  \right]\leq \left(\tsum_{t=1}^k\tfrac{t}{8\gamma}\right)^{-1}\left[\tfrac{l k}{2}D_X^2 + \tfrac{3\gamma}{2}D_X^2
+ \tfrac{21(k+1)k\sigma^2}{8 \gamma N}\right].
\end{align}
Specifically, if $\gamma = 4\bar\cL$, we have
\begin{align}\label{res_thm5_1}
& \bbe\left[ \|g_{X, R(k)}\|^2  \right]\leq \left[\tfrac{\bar\cL l D_X^2}{32(k+1)} + \tfrac{384\bar\cL^2 D_X^2}{k (k+1)} 
+ \tfrac{42 \sigma^2}{  N}\right].
\end{align}
\end{theorem}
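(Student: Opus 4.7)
The plan is to follow the template of the proof of Theorem~\ref{thm_2}, but with the stochastic error $\hat\delta_t$ replaced by $\tilde\delta_t := \tilde G_t - \nabla f(x_{t-1})$, whose second moment is no longer uniformly bounded by $\sigma^2/b_t$ but instead grows within an epoch according to Lemma~\ref{bound_error_variance}. Starting from the optimality condition for \eqref{prox-mapping_3}, I would add and subtract $\langle \nabla f(x_{t-1}), \cdot\rangle$, absorb the inner product $\langle \tilde\delta_t, x_t - x_{t-1}\rangle$ via Young's inequality into $\tfrac{\gamma_t}{4}\|x_t - x_{t-1}\|^2$, then invoke the smoothness inequality \eqref{smoothness_2} to arrive at an analogue of \eqref{proof_stoch_1}. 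Setting $x = x_{t-1}$ yields a ``noisy descent'' inequality, and setting $x = x^*$ together with the weak-convexity \eqref{weakly_convex} gives the one-step recursion whose telescoping will drive the rest of the argument.

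Telescoping the $x = x^*$ inequality over $t = 1, \ldots, k$ and using $\gamma_t = \gamma$ yields the analogue of \eqref{proof_stoch_4}, with a right-hand side $\tfrac{lk}{2}D_X^2 + \tfrac{\gamma}{2}D_X^2 + \sum_{t=1}^k \langle \tilde\delta_t, x^* - x_{t-1}\rangle$. Then, just as in \eqref{proof_stoch_5}, I would lower bound the LHS by iterating the descent inequality (from $x = x_{t-1}$) backwards so that each squared step $\|x_t - x_{t-1}\|^2$ picks up a weight proportional to $t$. Converting these steps to projected gradients via $\|\tilde g_{X,t-1}\| = \gamma \|x_t - x_{t-1}\|$ and Young's inequality to swap $\tilde g_{X,t-1}$ for $g_{X,t-1}$ produces a $\sum_t \tfrac{t}{8\gamma} \|g_{X,t-1}\|^2$ term on the LHS, at the cost of additional $\|\tilde\delta_t\|^2$ terms whose coefficients are $O(t/\gamma)$.

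Taking expectations, the key step is to use Lemmas~\ref{bound_error_variance} and~\ref{bound_error_inner_product} to handle respectively the $\sum_t \tfrac{t}{\gamma}\bbe\|\tilde\delta_t\|^2$ and $\sum_t \bbe\langle \tilde\delta_t, x^* - x_{t-1}\rangle$ contributions. Lemma~\ref{bound_error_variance} controls each $\bbe\|\tilde\delta_{s,u}\|^2$ by $\sigma^2/N$ plus $\sum_{j=2}^u \bar\cL^2/b_{s,j} \cdot \bbe\|x_{s,j-1}-x_{s,j-2}\|^2$, and Lemma~\ref{bound_error_inner_product} (with suitably chosen $\beta_{s,j}$) controls the inner-product sum by further $\|x_{s,j-1} - x_{s,j-2}\|^2$ and $\|\tilde\delta_{s,j-1}\|^2$ terms. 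The main technical obstacle will be the bookkeeping: the $\|x_{s,j-1} - x_{s,j-2}\|^2$ terms generated by these lemmas must be reabsorbed into the descent surplus $\sum_t (\tfrac{3\gamma}{4} - \tfrac{L}{2})\|x_t - x_{t-1}\|^2$ on the LHS. This is precisely where $\gamma \ge 4\bar\cL$ and the choice of $b_{s,u}$ in \eqref{batch_size_vr} come in: in epoch $s=1$ the schedule $b_{1,u} = T^2/(u-1)$ makes $\sum_{u=2}^{T} u \bar\cL^2/b_{1,u}$ summable against the descent coefficient, while in epochs $s \ge 2$ the constant $b_{s,u} = 13T/2$ suffices because each epoch has length at most $T$; the numerical constant $13/2$ is tuned so that after triangle/Young with $\beta_{s,j-1} = \Theta(1/\gamma)$ the residual variance terms fit inside the descent slack with room for the $\tfrac{21(k+1)k\sigma^2}{8\gamma N}$ term from $\sigma^2/N$ contributions.

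Finally, the random output rule \eqref{random_output_vr} with weight $t$ converts $\sum_{t=1}^k \tfrac{t}{8\gamma}\bbe\|g_{X,t-1}\|^2$ into $\big(\sum_{t=1}^k \tfrac{t}{8\gamma}\big)\bbe\|g_{X,R(k)}\|^2$, producing the bound \eqref{res_thm6_1} after dividing by $\sum_{t=1}^k t/(8\gamma) = k(k+1)/(16\gamma)$. For the specific bound \eqref{res_thm5_1}, I substitute $\gamma = 4\bar\cL$ and simplify each term; the $\tfrac{\bar\cL l D_X^2}{32(k+1)}$ and $\tfrac{384\bar\cL^2 D_X^2}{k(k+1)}$ terms come from the first two right-hand-side pieces in \eqref{res_thm6_1}, and the $42\sigma^2/N$ term comes from $\tfrac{21(k+1)k\sigma^2}{8\gamma N}$ after dividing by $k(k+1)/(16\gamma)$.
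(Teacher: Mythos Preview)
Your plan matches the paper's proof closely: the paper indeed reuses the steps leading to \eqref{proof_stoch_2} and \eqref{proof_stoch_4} with $\tilde\delta_t$ in place of $\hat\delta_t(x_{t-1})$, lower-bounds the left side exactly as in \eqref{proof_stoch_5} to produce the $\sum_t \tfrac{t}{8\gamma}\|g_{X,t-1}\|^2$ term plus $O(t/\gamma)\|\tilde\delta_t\|^2$ costs, and then handles the variance and inner-product sums epoch-by-epoch via Lemmas~\ref{bound_error_variance} and~\ref{bound_error_inner_product} before reabsorbing the resulting $\|x_{s,j-1}-x_{s,j-2}\|^2$ terms using $\gamma\ge 4\bar\cL$ and the batch-size schedule.

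One point worth flagging: the paper treats the inner-product sum $\sum_u\bbe\langle\tilde\delta_{s,u}, x^*-x_{s,u-1}\rangle$ differently in the first epoch than in later ones. For $s\ge 2$ it uses Lemma~\ref{bound_error_inner_product} with $\beta_{s,j}=4/\gamma$, which yields step terms with coefficient $(U-u)\gamma/8$ that can be dominated by $((s-1)T+u)\gamma/8$ since $(s-1)T\ge T\ge U$. For $s=1$ that domination fails (the weight $U-u$ can far exceed $u$), so the paper instead applies Young's inequality directly to $\langle\tilde\delta_{1,u}, x^*-x_{1,u-1}\rangle$, picking up an extra $\tfrac{\gamma}{8}D_X^2$ and $\tfrac{2U}{\gamma}\|\tilde\delta_{1,u}\|^2$ terms, and then invokes Lemma~\ref{bound_error_variance} together with $b_{1,u}=T^2/(u-1)$ to bound those. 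Your plan to use Lemma~\ref{bound_error_inner_product} uniformly would need a non-constant $\beta_{1,j}$ in the first epoch to make the bookkeeping close; be prepared for that wrinkle when you write out the details.
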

\begin{proof}
First, recall that in the proof of Theorem~\ref{thm_2_0}, the first few steps leading to Ineq.~\eqref{proof_stoch_4} were derived using the optimality condition of \eqref{prox-mapping_2_0}, the smoothness of $f$, and the weak convexity of $f$. Consequently, Ineqs.~\eqref{proof_stoch_2} and  \eqref{proof_stoch_4} remain valid in this proof, with $\hat \delta_t(x_{t-1})$ in \eqref{proof_stoch_4} replaced by $\tilde \delta_t$. Thus, we obtain
\begin{align}\label{proof_vr_0}
f(x_{t-1}) - f(x_{t}) \geq (\tfrac{3\gamma_t}{4} - \tfrac{L}{2})\|x_t - x_{t-1}\|^2 - \tfrac{\|\tilde \delta_t\|^2}{\gamma_t},
\end{align}
and
\begin{align}\label{proof_vr_1}
&\tsum_{t=1}^k \left[f(x_t) - f(x^*) + (\tfrac{\gamma}{4} - \tfrac{L}{2})\|x_t - x_{t-1}\|^2 - \tfrac{\|\tilde \delta_t\|^2}{\gamma} \right]\nn\\
& \leq \tfrac{l k}{2}D_X^2 + \tfrac{\gamma}{2}D_X^2 + \tsum_{t=1}^k\langle \tilde \delta_t, x^* - x_{t-1}\rangle.
\end{align}
Now we further derive a lower bound for the LHS of the above inequality.
\begin{align}\label{proof_vr_2}
&\tsum_{t=1}^k \left[f(x_t) - f(x^*) + (\tfrac{\gamma}{4} - \tfrac{L}{2})\|x_t - x_{t-1}\|^2 - \tfrac{\|\tilde \delta_t\|^2}{\gamma} \right]\nn\\
& \geq \tsum_{t=1}^k  \left[f(x_t) - f(x_k) + (\tfrac{\gamma}{4} - \tfrac{L}{2})\|x_t - x_{t-1}\|^2 - \tfrac{\|\tilde \delta_t\|^2}{\gamma} \right]\nn\\
& = \tsum_{t=1}^k  \left[\tsum_{j=t}^{k-1} [f(x_j) - f(x_{j+1})] + (\tfrac{\gamma}{4} - \tfrac{L}{2})\|x_t - x_{t-1}\|^2 - \tfrac{\|\tilde \delta_t\|^2}{\gamma} \right]\nn\\
& \overset{(i)}\geq \tsum_{t=1}^k\left[\tsum_{j=t}^{k-1} [(\tfrac{3\gamma}{4} - \tfrac{L}{2})\|x_{j} - x_{j+1}\|^2 - \tfrac{\|\tilde \delta_{j+1}\|^2}{\gamma}] + (\tfrac{\gamma}{4} - \tfrac{L}{2})\|x_t - x_{t-1}\|^2 - \tfrac{\|\tilde \delta_t\|^2}{\gamma} \right]\nn\\
& \overset{(ii)}=  \tsum_{t=1}^k \left[(\tfrac{(3t-2)\gamma}{4}-\tfrac{tL}{2})\|x_t - x_{t-1}\|^2 - \tfrac{t \|\tilde \delta_t\|^2}{\gamma} \right]\nn\\
& \overset{(iii)}=  \tsum_{t=1}^k \left[\tfrac{(t-1)\gamma - tL}{2}\|x_t - x_{t-1}\|^2 + \tfrac{t}{4\gamma}\|\tilde g_{X, t-1}\|^2 - \tfrac{t \|\tilde \delta_t\|^2}{\gamma} \right]\nn\\
& \overset{(iv)}\geq  \tsum_{t=1}^k \left[\tfrac{(t-1)\gamma - tL}{2}\|x_t - x_{t-1}\|^2 + \tfrac{t}{8\gamma}\|g_{X, t-1}\|^2 - \tfrac{5t \|\tilde \delta_t\|^2}{4\gamma} \right],
\end{align}
where step (i) follows from Ineq.~\eqref{proof_vr_0}, step (ii) stems from rearranging the terms, step (iii) is due to the definition of $\tilde g_{X, t}$ in \eqref{def_projected_stochastic_gradient_vr}, and step (iv) is obtained from Young's inequality and the nonexpansive property of projection. 
By combining Ineqs.~\eqref{proof_vr_1} and \eqref{proof_vr_2} and taking expectation, we have
\begin{align}\label{proof_vr_3}
&\tsum_{t=1}^k \bbe\left[\tfrac{(t-1)\gamma - tL}{2}\|x_t - x_{t-1}\|^2 + \tfrac{t}{8\gamma}\|g_{X, t-1}\|^2  \right]\nn\\
&\leq \tfrac{l k}{2}D_X^2 + \tfrac{\gamma}{2}D_X^2 + \bbe\left[\tsum^{k}_{t=1}  \tfrac{5t \|\tilde \delta_t\|^2}{4\gamma}\right]  + \bbe\left[\tsum_{t=1}^k\langle \tilde \delta_t, x^* - x_{t-1}\rangle\right].
\end{align}
Next, we bound $\bbe\left[\tsum_{t=1}^k\langle \tilde \delta_t, x^* - x_{t-1}\rangle\right]$ by epochs. First, for $s \geq 2$ and $2 \leq U \leq T$, it follows from  Lemma~\ref{bound_error_inner_product} with $\beta_{i, j} = \tfrac{4}{\gamma}$ that 
\begin{align}\label{proof_vr_4}
&\bbe\left[ \tsum_{u=1}^U \langle \tilde \delta_{s, u}, x^* - x_{s, u-1} \rangle\right]\nn\\
& \leq \tsum_{u=1}^U \tsum_{j=2}^{u} \bbe\left[\tfrac{\gamma}{8}\|x_{s, j-1}- x_{s, j-2}\|^2+ \tfrac{2}{\gamma}\|\tilde \delta_{s, j-1} \|^2\right]\nn\\
& \leq \tsum_{u=1}^{U-1}\bbe\left[\tfrac{(U-u)\gamma}{8}\|x_{s, u}- x_{s, u-1}\|^2+ \tfrac{2(U-u)}{\gamma}\|\tilde \delta_{s, u} \|^2\right]\nn\\
& \leq \bbe\left[\tsum_{u=1}^U \tfrac{((s-1)T + u) \gamma}{8}\|x_{s, u}- x_{s, u-1}\|^2\right] + \bbe\left[\tsum_{u=1}^U \tfrac{2((s-1)T + u)}{\gamma}\|\tilde \delta_{s, u} \|^2\right].
\end{align}
For $s = 1$ and $2 \leq U \leq T$, we have
\begin{align}\label{proof_vr_5}
\bbe\left[ \tsum_{u=1}^U \langle \tilde \delta_{1, u}, x^* - x_{1, u-1} \rangle\right] & \overset{(i)} \leq \tsum^{U}_{u=1} \bbe\left[ \tfrac{\gamma}{8U} \|x^* - x_{1, u-1}\|^2  + \tfrac{2U}{\gamma } \|\tilde \delta_{1, u}\|^2\right]\nn\\
&\leq \tfrac{\gamma}{8}D_X^2 + \tsum^{U}_{u=1} \bbe\left[ \tfrac{2U}{\gamma } \|\tilde \delta_{1, u}\|^2\right]\nn\\
&\overset{(ii)}\leq \tfrac{\gamma}{8}D_X^2 + \bbe\left[\tsum_{u=1}^U \tfrac{2 U}{\gamma}\cdot \left(\tsum_{j=2}^u \tfrac{\bar\cL^2}{b_{s,j}}\|x_{1, j-1} - x_{1, j-2}\|^2 + \tfrac{\sigma^2}{N}\right)\right]\nn\\
&\overset{(iii)}\leq \tfrac{\gamma}{8}D_X^2 + \tsum_{u=1}^{U} \tfrac{2 U \sigma^2}{\gamma N} +  \bbe\left[\tsum_{u=1}^{U-1} \tfrac{2\bar\cL^2 (U-u)U}{ \gamma b_{1, u+1}}\|x_{1, u} - x_{1, u-1}\|^2\right]\nn\\
& \overset{(iv)}\leq \tfrac{\gamma}{8}D_X^2+ \tfrac{2 U^2 \sigma^2}{\gamma N} + \bbe\left[\tsum_{u=1}^{U-1} \tfrac{2u \bar\cL^2}{\gamma}\|x_{1,u} - x_{1, u-1}\|^2 \right]\nn\\
& \overset{(v)}\leq \tfrac{\gamma}{8}D_X^2+ \tfrac{2 U^2 \sigma^2}{\gamma N} + \bbe\left[\tsum_{u=1}^{U-1} \tfrac{u \gamma}{8}\|x_{1, u} - x_{1, u-1}\|^2 \right],
\end{align}
where step (i) follows from Young's inequality, step (ii) applies Lemma~\ref{bound_error_variance}, step (iii) is obtained from rearranging the terms, step (iv) is due to $b_{1, u} \geq \tfrac{T^2}{u-1}$, and step (v) stems from $\gamma \geq 4\bar\cL$. We assume $k \geq T$, and combine Ineqs.~\eqref{proof_vr_3}-\eqref{proof_vr_5} to obtain
\begin{align}\label{proof_vr_6}
&\tsum_{t=1}^k \bbe\left[\tfrac{(t-1)\gamma - tL}{2}\|x_t - x_{t-1}\|^2 + \tfrac{t}{8\gamma}\|g_{X, t-1}\|^2  \right]\nn\\
&\leq \tfrac{l k}{2}D_X^2 + \tfrac{5\gamma}{8}D_X^2 + \tfrac{2 T^2 \sigma^2}{\gamma N} + \bbe\left[\tsum^{T}_{t=1}  \tfrac{5t \|\tilde \delta_t\|^2}{4\gamma}\right] + \bbe\left[\tsum_{t=T+1}^k\tfrac{13t\|\tilde \delta_{t}\|^2}{4\gamma}\right]+ \bbe\left[\tsum_{t=1}^{k} \tfrac{t \gamma}{8}\|x_t - x_{t-1}\|^2 \right]   .
\end{align}
Next, we derive an upper bound for $\bbe\left[\tsum^{T}_{t=1}  \tfrac{5t \|\tilde \delta_t\|^2}{4\gamma}\right]+\bbe\left[\tsum_{t=T+1}^k \tfrac{13t \|\tilde \delta_t\|^2}{4\gamma}\right]$. We achieve this by bounding them epoch-wise, i.e., for $s \geq 1$, $1 \leq U \leq T$, as follows.
\begin{align}
&\bbe\left[\tsum_{u = 1}^{U} \tfrac{((s-1)T + u)\|\tilde \delta_{s, u}\|^2}{ \gamma}\right]\nn\\
&\overset{(i)}\leq\bbe\left[\tsum_{u = 1}^{U} \tfrac{(s-1)T + u}{ \gamma} \cdot \left(\tsum_{j=2}^u \tfrac{\bar\cL^2}{b_{s,j}}\|x_{s, j-1} - x_{s, j-2}\|^2 + \tfrac{\sigma^2}{N}\right)\right]\nn\\
&\leq \tsum_{u = 1}^{U} \tfrac{(s-1)T + u}{ \gamma}\cdot \tfrac{\sigma^2}{N} + \bbe\left[\tsum_{u=1}^{U-1}\left( \tfrac{\bar\cL^2}{\gamma b_{s, u+1}}\|x_{s, u} - x_{s, u-1}\|^2 \cdot \left(\tsum_{j=u+1}^U ((s-1)T + j) \right) \right) \right]\nn\\
&\leq \tsum_{u = 1}^{U} \tfrac{(s-1)T + u}{ \gamma}\cdot \tfrac{\sigma^2}{N} + \bbe\left[\tsum_{u=1}^{U-1}\left( \tfrac{\bar\cL^2}{\gamma b_{s, u+1}}\|x_{s, u} - x_{s, u-1}\|^2 \cdot (U-u)\left((s-1)T + \tfrac{u+1+U}{2} \right)\right)  \right],
\end{align}
where step (i) follows from Lemma~\ref{bound_error_variance}.
When $s = 1$, since $b_{1, u} \geq \tfrac{T^2}{u-1}$ for $u \geq 2$, it follows that 
\begin{align*}
\bbe\left[\tsum_{u = 1}^{T} \tfrac{5u\|\tilde \delta_{1, u}\|^2}{4 \gamma}\right] &\leq \tsum_{u = 1}^{T} \tfrac{5u\sigma^2}{4 \gamma N} + \bbe\left[\tsum_{u=1}^{T-1} \tfrac{5\bar\cL^2}{4\gamma b_{1, u+1}}\|x_{1, u} - x_{1, u-1}\|^2 \cdot \tfrac{T^2}{2}  \right]\nn\\
& \leq \tsum_{u = 1}^{T} \tfrac{5u\sigma^2}{4 \gamma N} + \bbe\left[\tsum_{u=1}^{T-1} \tfrac{ u \bar\cL^2}{\gamma}\|x_{1, u} - x_{1, u-1}\|^2   \right].
\end{align*}
When $s > 1$, we set $b_{s, u} \geq \tfrac{13T}{2}$, so that
\begin{align*}
& \bbe\left[\tsum_{u = 1}^{U} \tfrac{13((s-1)T + u)\|\tilde \delta_{s, u}\|^2}{4 \gamma}\right] \\ 
&\leq \tsum_{u = 1}^{U} \tfrac{13((s-1)T + u)\sigma^2}{4 \gamma N} + \bbe\left[\tsum_{u=1}^{U-1}\left( \tfrac{13\bar\cL^2}{4\gamma b_{s, u+1}}\|x_{s, u} - x_{s, u-1}\|^2 \cdot 2T((s-1)T + u)\right) \right]\nn\\
& \leq \tsum_{u = 1}^{U} \tfrac{13((s-1)T + u)\sigma^2}{4 \gamma N} + \bbe\left[\tsum_{u=1}^{U-1} \tfrac{((s-1)T + u) \bar\cL^2}{\gamma}\|x_{s, u} - x_{s, u-1}\|^2   \right].
\end{align*}
Therefore, by summing over the iterates across all epochs, we obtain
\begin{align}\label{proof_vr_7}
\bbe\left[\tsum^{T}_{t=1}  \tfrac{5t \|\tilde \delta_t\|^2}{4\gamma}\right]+\bbe\left[\tsum_{t=T+1}^k \tfrac{13t \|\tilde \delta_t\|^2}{4\gamma}\right] &\leq \tsum^{k}_{t=1} \tfrac{13 t \sigma^2}{4\gamma N} - \tsum^{T}_{t=1} \tfrac{ 2t \sigma^2}{\gamma N} + \left[\tsum_{t=1}^k \tfrac{t \bar\cL^2}{\gamma} \|x_t - x_{t-1}\|^2 \right]\nn\\
& \leq \tfrac{13 (k+1)k\sigma^2}{8 \gamma N} - \tfrac{T(T+1)\sigma^2}{\gamma N} + \left[\tsum_{t=1}^k \tfrac{t \bar\cL^2}{\gamma} \|x_t - x_{t-1}\|^2 \right].
\end{align}
Combining Ineqs.~\eqref{proof_vr_6} and \eqref{proof_vr_7} yields
\begin{align*}
&\tsum_{t=1}^k \bbe\left[\left(\tfrac{(3t-4)\gamma - tL}{8} - \tfrac{t \bar\cL^2}{\gamma}\right)\|x_t - x_{t-1}\|^2 + \tfrac{t}{8\gamma}\|g_{X, t-1}\|^2  \right]\nn\\
&\leq \tfrac{l k}{2}D_X^2 + \tfrac{5\gamma}{8}D_X^2 + \tfrac{ T^2 \sigma^2}{\gamma N} + \tfrac{13 (k+1)k\sigma^2}{8 \gamma N}, 
\end{align*}
which, 
in view of the fact that $\tfrac{(3t-4)\gamma - tL}{8} - \tfrac{t \bar\cL^2}{\gamma} > 0$ for $t\geq 2$ due to $\gamma \geq 4\bar\cL$, then implies that
\begin{align}\label{proof_vr_9}
& \bbe\left[ \tsum_{t=1}^k\tfrac{t}{8\gamma}\|g_{X, t-1}\|^2  \right]\leq \tfrac{l k}{2}D_X^2 + \tfrac{3\gamma}{2}D_X^2 + \tfrac{T^2 \sigma^2}{\gamma N} + \tfrac{13 (k+1)k\sigma^2}{8 \gamma N}.
\end{align}
Finally, using the random output rule \eqref{random_output_vr}, we have $\bbe\left[ \tsum_{t=1}^k\tfrac{t}{8\gamma}\|g_{X, t-1}\|^2  \right] = (\tsum_{t=1}^k\tfrac{t}{8\gamma})\bbe[\|g_{X, R(k)}\|^2]$, thereby completing the proof.
\end{proof}
\vgap

Based on Theorem~\ref{thm_6}, we can deirve the iteration and sample complexities of the VR-SPG method for finding an $\epsilon$-stationary point in expectation. Specifically, we choose the epoch length $T = \lceil\sqrt{N}\rceil$ and the batch size $N$ as $\max\{ 1, \tfrac{84\sigma^2}{\epsilon^2}\}$. Under these settings, the term $\tfrac{42\sigma^2}{N}$ in \eqref{res_thm5_1} is upper bounded by $\epsilon/2$, and we require at most 
\begin{align*}
\O\left( \tfrac{\bar\cL D_X}{\epsilon} + \tfrac{\bar\cL lD_X^2}{\epsilon^2} \right),
\end{align*}
iterations of VR-SPG to achieve the desired $\epsilon$-accuracy. Furthermore, considering the batch size parameters $N$ and $b_t$ defined in \eqref{batch_size_vr}, the sample complexity (total number of oracle calls) is bounded by
\begin{align}\label{unified_bound_stochastic_nonconvex}
\O\left( N\tfrac{k}{T} + T^2 \log T + T^2 \tfrac{k}{T} \right) = \O\left( \tfrac{\bar\cL D_X}{\epsilon} + \tfrac{\bar\cL lD_X^2}{\epsilon^2} + \tfrac{\sigma^2}{\epsilon^2} \log \tfrac{\sigma}{\epsilon}  + \tfrac{\bar\cL D_X\sigma}{\epsilon^2} + \tfrac{\bar\cL l D_X^2 \sigma}{\epsilon^3}\right).
\end{align}
 This complexity bound improves the $\mathcal{O}(\epsilon^{-4})$ result in Section~\ref{sec_stochastic} $\mathcal{O}(\epsilon^{-3})$, \revision{}{which is order-optimal when $\sigma$ is large, supported by the lower bound in \cite{arjevani2023lower}}. Additionally, it leverages the weakly convex structure, and in the convex case where $l=0$, it reduces to a $\tilde{\mathcal{O}}(\epsilon^{-2})$ bound, which is nearly optimal for convex optimization. To the best of our knowledge, this unified complexity bound for convex and nonconvex optimization is new for variance reduced methods in the literature.

\subsection{Auto-conditioned variance-reduced stochastic projected gradient (AC-VR-SPG) method}\label{subsec_vr_adaptive}
In this subsection, we propose an auto-conditioned variant of the VR-SPG method. 
\begin{algorithm}[H]\caption{Auto-conditioned variance-reduced stochastic projected gradient (AC-VR-SPG) method}\label{alg_ac_vr}
	\begin{algorithmic}
		\State{\textbf{Input}: initial point $x_0 \in X$, 
        $\bar \cL_0$ and $\hat \cL_{-1}$ such that $0< \hat \cL_{-1} = \bar\cL_0 \leq \hat\cL$, 
        and epoch index $s=0$. 
  }
		\For{$t=1,\cdots, k$}
        \If{$\mathrm{mod}(t, T) = 1$}
        \State{Set $s \leftarrow s+1$.}
        \State{Generate i.i.d. observations $\xi^s_1, ..., \xi^s_N$ and compute
        \begin{align}
        \tilde G_t &= \tfrac{1}{N}\tsum_{i=1}^N \cG(x_{t-1}, \xi^s_i). \label{def_hat_G_1}\\
        \hat \cL_{t-1} &= \max\{\hat \cL_{t-2}, \bar \cL_{t-1}\}.
        \end{align}
        }
        \Else
        \State{Generate i.i.d. observations $\xi_1^t, ..., \xi_{b_t}^{t}$ and compute
        \begin{align}
        \tilde G_t &= \tfrac{1}{b_t}\tsum_{i=1}^{b_t}[\cG(x_{t-1}, \xi_i^t) - \cG(x_{t-2}, \xi_i^t)] + \tilde G_{t-1}, \label{def_hat_G_2}\\
        \tilde \cL_{t-1} & = \sqrt{\tfrac{\tsum_{i=1}^{b_t}\|\cG(x_{t-1}, \xi_i^t) - \cG(x_{t-2}, \xi_i^t)\|^2}{b_t \|x_{t-1}-x_{t-2}\|^2}}, \label{def_tilde_L}\\
        \hat \cL_{t-1} &= \max\{\hat \cL_{t-2}, \bar \cL_{t-1}, \tilde \cL_{t-1}\}.
        \end{align}
        }
        \EndIf
        \State{Set $\gamma_t = 4\hat \cL_{t-1}$ and compute
       \begin{align}
  x_t = \arg \min_{x\in X} \left\{\langle \tilde G_t , x \rangle + \tfrac{\gamma_t}{2}\|x_{t-1} - x\|^2 \right\}. \label{prox-mapping_4}
  \end{align} 
        } 
        \State{Generate i.i.d. observations $\bar \xi^t_{1}, ..., \bar \xi^t_{b_t'}$ and compute
        \begin{align}
        \bar G_t(x) &= \tfrac{1}{b_t'}\tsum_{i=1}^{b_t'} \cG(x, \bar \xi_i^t), ~~\bar F_t(x) = \tfrac{1}{b_t'}\tsum_{i=1}^{b_t'} \cF(x, \bar\xi_i^t),\label{def_tilde_g_5}\\
  \bar\cL_t &= \tfrac{2(\bar F_t(x_{t}) - \bar F_t(x_{t-1}) - \langle \bar G_t(x_{t-1}), x_t - x_{t-1}\rangle)}{\|x_t - x_{t-1}\|^2}.\label{evaluate_L_t_5}
        \end{align}
        }
\EndFor
	\end{algorithmic}
\end{algorithm}
The primary distinction between AC-VR-SPG and VR-SPG lies in the requirement for Lipschitz constant knowledge. AC-VR-SPG does not rely on any prior knowledge of the Lipchitz constants. Instead, it dynamically estimates the local Lipschitz constants $\tilde \cL_{t}$ and $\bar \cL_t$ to determine the stepsizes. Specifically, $\bar \cL_t$, computed using \eqref{evaluate_L_t_5}, is the same local smoothness estimator \revision{}{as Estimator I in \eqref{evaluate_L_t_2}} employed in the AC-SPG method (Algorithm~\ref{alg_2}). On the other hand, $\tilde \cL_{t}$ is a separate local smoothness estimator calculated while collecting the mini-batches of observations to update $\tilde G_t$ based on $\tilde G_{t-1}$ (see \eqref{def_tilde_L}).  

\revision{}{It should be noted that, in order to achieve the line-search-free property, the stepsize $\gamma_t$ depends only on local Lipschitz constants computed at previous iterates, i.e., $\bar \cL_{t-1}$ and $\tilde \cL_{t-1}$, and both estimators may suffer from the underestimation issues as we have discussed in the analyses of AC-PG (Alogrithm~\ref{alg_1}) and AC-SPG (Algorithm~\ref{alg_2}). Therefore, this two-estimator scheme, together with the double-loop structure of AC-VR-SPG, makes it much more challenging to establish the unified convergence rate that depends on $l$, and simply applying the segmentation analyses in Subsections~\ref{subsec_deterministic_2} and \ref{sec_stoch_expectation} is not sufficient to overcome this challenge. In the analysis below, we focus on establishing a complexity bound of the order $\tilde{\mathcal{O}}(\epsilon^{-3})$ that does not depend on $l$ in the nonconvex stochastic optimization setting, and we leave the question of achieving the unified complexity bound as in \eqref{unified_bound_stochastic_nonconvex} for future research. }

As in the previous subsection, we will use the two equivalent indexing rules defined in \eqref{def_index_rule} interchangeably. We begin by introducing a technical result that characterizes the expected squared error of the operator $\tilde G_{s, u}$, given the current stepsize $\gamma_{s,u}$.

\begin{lemma}\label{bound_error_variance_2}
If Assumptions~\ref{assump_unbiasness}-\ref{assump_variance} hold, we have
\begin{align*}
\bbe\left[\tfrac{\|\tilde\delta_{s, u}\|^2}{\gamma_{s, u}}\right] \leq \bbe\left[\tsum_{j=2}^u \tfrac{\tilde \cL^2_{s, j-1}}{\gamma_{s, j-1}b_{s, j}}\|x_{s, j-1} - x_{s, j-2}\|^2 \right] + \bbe\left[\tfrac{\sigma^2}{\gamma_{s, 1} N}\right].
\end{align*}
\end{lemma}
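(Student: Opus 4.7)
The plan is to prove this by induction on $u$, mimicking the recursion-based argument used for Lemma \ref{bound_error_variance} but carefully tracking the division by the step-size sequence $\{\gamma_{s,u}\}$. The base case $u=1$ is immediate: at the start of each epoch $\tilde G_{s,1}$ is the $N$-sample batch estimator of $\nabla f(x_{s,0})$, while $\gamma_{s,1}=4\hat\cL_{s,0}$ is determined purely by quantities from the preceding epoch and is therefore independent of the samples $\xi^s_i$. Taking conditional expectation and using Assumption \ref{assump_variance} yields $\bbe[\|\tilde\delta_{s,1}\|^2/\gamma_{s,1}]\le \bbe[\sigma^2/(\gamma_{s,1}N)]$.

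For the inductive step, I first write $\tilde\delta_{s,u}=\tilde\delta_{s,u-1}+Z_u$, where
\[
Z_u:=\tfrac{1}{b_{s,u}}\tsum_{i=1}^{b_{s,u}}\bigl[\cG(x_{s,u-1},\xi_i^{s,u})-\cG(x_{s,u-2},\xi_i^{s,u})\bigr]-\bigl[\nabla f(x_{s,u-1})-\nabla f(x_{s,u-2})\bigr].
\]
Let $\mathcal G_{s,u-1}$ denote the $\sigma$-algebra generated by all randomness up through the sampling step of iteration $(s,u-1)$. Then $\tilde\delta_{s,u-1}$, $x_{s,u-1}$, $x_{s,u-2}$ and $\gamma_{s,u-1}$ are all $\mathcal G_{s,u-1}$-measurable, while the $\xi_i^{s,u}$ are i.i.d.\ and independent of $\mathcal G_{s,u-1}$. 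Hence $\bbe[Z_u\mid\mathcal G_{s,u-1}]=0$ and
\[
\bbe\bigl[\|\tilde\delta_{s,u}\|^2\,\big|\,\mathcal G_{s,u-1}\bigr]
=\|\tilde\delta_{s,u-1}\|^2+\bbe\bigl[\|Z_u\|^2\,\big|\,\mathcal G_{s,u-1}\bigr].
\]
Writing $Y_i=\cG(x_{s,u-1},\xi_i^{s,u})-\cG(x_{s,u-2},\xi_i^{s,u})$ and using the i.i.d.\ structure, $\bbe[\|Z_u\|^2\mid\mathcal G_{s,u-1}]\le \tfrac{1}{b_{s,u}}\bbe[\|Y_1\|^2\mid \mathcal G_{s,u-1}]$. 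The point of introducing $\tilde\cL_{s,u-1}$ via definition \eqref{def_tilde_L} is exactly that $\tilde\cL_{s,u-1}^2\|x_{s,u-1}-x_{s,u-2}\|^2=\tfrac{1}{b_{s,u}}\tsum_i\|Y_i\|^2$, so averaging over $i$ gives $\bbe[\|Y_1\|^2\mid\mathcal G_{s,u-1}]=\bbe[\tilde\cL_{s,u-1}^2\|x_{s,u-1}-x_{s,u-2}\|^2\mid\mathcal G_{s,u-1}]$. This yields the single-step recursion
\[
\bbe\bigl[\|\tilde\delta_{s,u}\|^2\,\big|\,\mathcal G_{s,u-1}\bigr]\le \|\tilde\delta_{s,u-1}\|^2+\tfrac{1}{b_{s,u}}\bbe\bigl[\tilde\cL_{s,u-1}^2\|x_{s,u-1}-x_{s,u-2}\|^2\,\big|\,\mathcal G_{s,u-1}\bigr].
\]

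The main subtlety is that the target weight $1/\gamma_{s,u}$ is \emph{not} $\mathcal G_{s,u-1}$-measurable, because for $u\ge 2$ the update $\gamma_{s,u}=4\hat\cL_{s,u-1}$ depends on $\tilde\cL_{s,u-1}$ and hence on the fresh samples $\xi_i^{s,u}$. The key observation that unlocks the proof is the monotonicity built into the algorithm: $\hat\cL_t$ is nondecreasing in $t$, so $\gamma_{s,u}\ge\gamma_{s,u-1}$, giving $1/\gamma_{s,u}\le 1/\gamma_{s,u-1}$. Since $1/\gamma_{s,u-1}$ \emph{is} $\mathcal G_{s,u-1}$-measurable, I can multiply the conditional recursion by $1/\gamma_{s,u-1}$, take total expectation, and then bound the left-hand side from below by $\bbe[\|\tilde\delta_{s,u}\|^2/\gamma_{s,u}]$. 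This produces exactly the summand $\tilde\cL_{s,u-1}^2\|x_{s,u-1}-x_{s,u-2}\|^2/(\gamma_{s,u-1}b_{s,u})$ with the correct weight, after which the inductive hypothesis applied to $\bbe[\|\tilde\delta_{s,u-1}\|^2/\gamma_{s,u-1}]$ closes the recursion. The hardest part is precisely pinning down the measurability of $\gamma_{s,u}$ versus $\tilde\cL_{s,u-1}$ and exploiting monotonicity to push the weight into a form that is measurable at the correct conditioning level; everything else is a routine conditional second-moment computation.
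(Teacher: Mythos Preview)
Your proposal is correct and follows essentially the same approach as the paper: first replace $1/\gamma_{s,u}$ by $1/\gamma_{s,u-1}$ via monotonicity of the stepsize sequence, then carry out the standard one-step variance recursion using the independence of the fresh samples $\{\xi_i^{s,u}\}$ from $(\gamma_{s,u-1},\tilde\delta_{s,u-1},x_{s,u-1},x_{s,u-2})$, and finally unroll the recursion. Your treatment of the measurability of $\gamma_{s,u}$ versus $\gamma_{s,u-1}$ is in fact more explicit than the paper's, but the argument is identical in substance.
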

\begin{proof}
First, for $u = 1$, due to the definition of $\tilde G_{s,1}$ in \eqref{def_hat_G_1} and Assumption~\ref{assump_variance}, we have that
\begin{align}
\bbe\left[\tfrac{\|\tilde\delta_{s, 1}\|^2}{\gamma_{s, 1}}\right] \leq \bbe\left[\tfrac{\sigma^2}{\gamma_{s, 1} N}\right].
\end{align}
For $u \geq 2$, we can bound
\begin{align*}
\bbe\left[ \tfrac{\|\tilde\delta_{s, u}\|^2}{\gamma_{s, u}}\right] &\overset{(i)}\leq \bbe\left[ \tfrac{\|\tilde\delta_{s, u}\|^2}{\gamma_{s, u-1}}\right]\\
& = \bbe\left[ \tfrac{1}{\gamma_{s, u-1}} \left\|\tfrac{\sum_{i=1}^{b_{s, u}}[\cG(x_{s, u-1}, \xi_i^{s, u}) - \cG(x_{s, u-2}, \xi_i^{s, u})]}{b_{s, u}} - \nabla f(x_{s, u-1}) + \nabla f(x_{s, u-2}) +\tilde \delta_{s, u-1}\right\|^2\right]\\
& \overset{(ii)}= \bbe\left[ \tfrac{1}{\gamma_{s, u-1}}  \left\|\tfrac{\sum_{i=1}^{b_{s, u}}[\cG(x_{s, u-1}, \xi_i^{s, u}) - \cG(x_{s, u-2}, \xi_i^{s, u})]}{b_{s, u}} - \nabla f(x_{s, u-1}) + \nabla f(x_{s, u-2})\right\|^2 +  \tfrac{1}{\gamma_{s, u-1}}\|\tilde \delta_{s, u-1}\|^2 \right]\\
& \overset{(iii)}= \bbe\left[ \tfrac{1}{\gamma_{s, u-1}} \tfrac{\sum_{i=1}^{b_{s, u}} \left\|\cG(x_{s, u-1}, \xi_i^{s, u}) - \cG(x_{s, u-2}, \xi_i^{s, u}) - \nabla f(x_{s, u-1}) + \nabla f(x_{s, u-2})\right\|^2}{b^2_{s, u}} +  \tfrac{1}{\gamma_{s, u-1}}\|\tilde \delta_{s, u-1}\|^2 \right]\\
& \leq \bbe\left[ \tfrac{1}{\gamma_{s, u-1}} \tfrac{\sum_{i=1}^{b_{s, u}} \|\cG(x_{s, u-1}, \xi_i^{s, u}) - \cG(x_{s, u-2}, \xi_i^{s, u})\|^2}{b^2_{s, u}} +  \tfrac{1}{\gamma_{s, u-1}}\|\tilde \delta_{s, u-1}\|^2 \right]\\
& \overset{(iv)}= \bbe\left[ \tfrac{\tilde \cL^2_{s, u-1}}{\gamma_{s, u-1} b_{s, u}} \|x_{s, u-1} - x_{s, u-2}\|^2 \right]+\bbe\left[ \tfrac{\|\tilde \delta_{s, u-1}\|^2}{\gamma_{s, u-1}}\right],
\end{align*}
where step (i) follows from that $\gamma_{s,u}$ is non-decreasing, step (ii) is due to that $\{\xi_i^{s,u}\}_{i=1}^{b_{s,u}}$ are independent with $\gamma_{s, u-1}$ and $\tilde \delta_{s, u-1}$, step (iii) stems from that $\{\xi_i^{s,u}\}_{i=1}^{b_{s,u}}$ are mutually independent, and step (iv) is obtained from the definition of $\tilde \cL_{s, u-1}$ in \eqref{def_tilde_L}. By utilizing the above inequality recursively, we obtain the desired result. 
\end{proof}
\vgap

We describe the main convergence properties of AC-VR-SPG in the following theorem. 
\begin{theorem}\label{thm_7}
Assume Assumptions \ref{assump_unbiasness}-\ref{assump_smooth_stoch} hold. Let $\{x_t\}$ be generated by Algorithm~\ref{alg_ac_vr}, with $b_t =  T$ and $b_t' \geq 1$. Suppose the output $x_{R(k)}$ at step $k \geq 1$ is randomly chosen from $\{x_0, ..., x_{k-1}\}$ according to the following probability mass function:
\begin{align}\label{random_output_ac_vr}
\mathbb{P}(x_{R(k)} = x_{t-1}) = \left(\tsum_{j=1}^k \tfrac{1}{\gamma_j}\right)^{-1} \cdot \tfrac{1}{\gamma_t}.
\end{align}
Then we have
\begin{align}\label{res_thm7_1}
 \bbe\left[\tsum_{t=1}^k \tfrac{1}{4\gamma_t}\|g_{X, R(k)}\|^2\right]  \leq f(x_0) - f(x^*)  + \tfrac{7 \hat \cL^2 D_X^2}{2\bar \cL_0}+ \tfrac{3k \sigma^2}{8 \bar \cL_0 N},
\end{align}
and consequently,
\begin{align}\label{res_thm7_2}
 \bbe\left[\|g_{X, R(k)}\|^2\right]  \leq \tfrac{16\hat \cL(f(x_0) - f(x^*))}{k}  + \tfrac{56 \hat \cL^3 D_X^2}{\bar \cL_0 k}+ \tfrac{6\hat \cL \sigma^2}{ \bar \cL_0 N}.
\end{align}
\end{theorem}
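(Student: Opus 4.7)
The plan is to follow the descent-based template used in the proof of Theorem~\ref{thm_6}, adapted to the adaptive stepsize $\gamma_t = 4\hat\cL_{t-1}$ and exploiting the variance-reduction structure through Lemma~\ref{bound_error_variance_2}. Note that, in contrast to the proof of Theorem~\ref{thm_2}, I would not invoke weak convexity by choosing $x = x^*$ in the three-point inequality, since the stated bound does not contain the parameter $l$; the appearance of $f(x_0)-f(x^*)$ rather than $D_X^2$ on the right-hand side is the signature of a descent argument.

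First, I would apply the optimality condition of \eqref{prox-mapping_4} with the test point $x = x_{t-1}$, yielding $\langle \tilde G_t, x_t - x_{t-1}\rangle + \gamma_t \|x_t - x_{t-1}\|^2 \leq 0$. Combined with the identity $f(x_t) - f(x_{t-1}) - \langle\nabla f(x_{t-1}), x_t-x_{t-1}\rangle = \tfrac{L_t}{2}\|x_t-x_{t-1}\|^2$ (the definition of $L_t$) and Young's inequality on the cross term involving $\tilde\delta_t := \tilde G_t - \nabla f(x_{t-1})$, this produces the per-step bound
\begin{align*}
f(x_t) + \bigl(\tfrac{\gamma_t}{2}-\tfrac{L_t}{2}\bigr)\|x_t-x_{t-1}\|^2 \leq f(x_{t-1}) + \tfrac{\|\tilde\delta_t\|^2}{2\gamma_t}.
\end{align*}
Telescoping from $t=1$ to $k$ and using $f(x_k) \geq f(x^*)$ gives $\sum_t (\tfrac{\gamma_t-L_t}{2})\|x_t-x_{t-1}\|^2 \leq f(x_0) - f(x^*) + \sum_t \tfrac{\|\tilde\delta_t\|^2}{2\gamma_t}$. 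The key difficulty is that $\gamma_t - L_t$ may be negative when $\hat\cL_{t-1}$ underestimates $L_t$. I would handle this by rewriting $L_t = \bar\cL_t + (L_t - \bar\cL_t)$: the conditional unbiasedness from Lemma~\ref{lemma_L_t_stochastic} ensures $\bbe[(L_t-\bar\cL_t)\|x_t-x_{t-1}\|^2] = 0$ (since $\bar\cL_t$ uses fresh samples $\bar\xi^t$ independent of $x_t, x_{t-1}$), while $\bar\cL_t \leq \hat\cL_t$ and the telescoping identity $\sum_t (\hat\cL_t-\hat\cL_{t-1})\|x_t-x_{t-1}\|^2 \leq D_X^2 (\hat\cL_k - \bar\cL_0) \leq \hat\cL D_X^2$ absorb the residual error into the $\tfrac{7\hat\cL^2 D_X^2}{2\bar\cL_0}$ term on the right-hand side.

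Second, I would convert the resulting displacement sum into the projected gradient $g_{X, t-1}$ via $\|x_t-x_{t-1}\|^2 = \|\tilde g_{X,t-1}\|^2/\gamma_t^2$ together with the nonexpansive bound $\|\tilde g_{X,t-1} - g_{X,t-1}\| \leq \|\tilde\delta_t\|$. Then I would apply Lemma~\ref{bound_error_variance_2} to control $\bbe[\sum_t \|\tilde\delta_t\|^2/\gamma_t]$ by a variance contribution $O(\sigma^2 k /(\bar\cL_0 N))$ plus coupling terms $\tilde\cL_{s,j-1}^2/(\gamma_{s,j-1} b_{s,j})\|x_{s,j-1}-x_{s,j-2}\|^2$. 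The update rule $\hat\cL_{s,j-1} \geq \tilde\cL_{s,j-1}$ gives $\tilde\cL_{s,j-1}^2/\gamma_{s,j-1} \leq \hat\cL_{s,j-1}/4$, and because each epoch contributes at most $T$ such terms, the choice $b_t = T$ ensures these couplings are absorbed into the descent contribution on the left-hand side. Finally, the random output rule \eqref{random_output_ac_vr} converts $\sum_t \tfrac{1}{4\gamma_t}\|g_{X,t-1}\|^2$ into $\bigl(\sum_t \tfrac{1}{4\gamma_t}\bigr)\bbe[\|g_{X,R(k)}\|^2]$, yielding \eqref{res_thm7_1}; the bound \eqref{res_thm7_2} then follows from $\gamma_t \leq 4\hat\cL$, so that $\sum_{t=1}^k 1/(4\gamma_t) \geq k/(16\hat\cL)$.

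The main obstacle, as in the AC-SPG analysis, is controlling the underestimation error: because $\gamma_t$ depends only on past iterates, $(\gamma_t-L_t)/2$ can be negative for some iterations, and handling this cleanly requires simultaneously exploiting the conditional unbiasedness of $\bar\cL_t$ and the telescoping monotonicity of $\hat\cL_t$ so that the correction is absorbed into the single additive constant $\tfrac{7\hat\cL^2 D_X^2}{2\bar\cL_0}$. A secondary technical challenge is balancing the variance-reduction coupling terms from Lemma~\ref{bound_error_variance_2} against the adaptive stepsize, since $\gamma_{s,j-1}$ in the denominator is only lower-bounded by $4\bar\cL_0$ and not by a uniform quantity of order $\hat\cL$, which is why the bound $1/\bar\cL_0$ ultimately appears in the residual terms.
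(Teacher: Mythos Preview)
Your overall architecture matches the paper closely: the descent argument with $x=x_{t-1}$, the use of the unbiasedness of $\bar\cL_t$ to eliminate $(L_t-\bar\cL_t)\|x_t-x_{t-1}\|^2$, the telescoping of $\hat\cL_t-\hat\cL_{t-1}$, the conversion to $g_{X,t-1}$ via the nonexpansive bound, and the invocation of Lemma~\ref{bound_error_variance_2}. The final step through \eqref{random_output_ac_vr} and $\gamma_t\le 4\hat\cL$ is also correct.

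There is, however, a genuine gap in the step where you absorb the coupling terms from Lemma~\ref{bound_error_variance_2}. You write that ``$\hat\cL_{s,j-1}\ge\tilde\cL_{s,j-1}$ gives $\tilde\cL_{s,j-1}^2/\gamma_{s,j-1}\le\hat\cL_{s,j-1}/4$'', but this conflates two different stepsizes. In global index $t=(s-1)T+j$, the quantity $\tilde\cL_{s,j-1}=\tilde\cL_{t-1}$ is computed at iteration $t$ (from the samples $\xi_i^{t}$) and is incorporated into $\hat\cL_{t-1}$, which then defines $\gamma_t=4\hat\cL_{t-1}$. But the denominator $\gamma_{s,j-1}$ in Lemma~\ref{bound_error_variance_2} is $\gamma_{t-1}=4\hat\cL_{t-2}$, the stepsize at the \emph{previous} iteration; this is forced by the measurability step $\|\tilde\delta_{s,u}\|^2/\gamma_{s,u}\le\|\tilde\delta_{s,u}\|^2/\gamma_{s,u-1}$ in the lemma's proof, which is needed precisely because $\gamma_{s,u}$ is correlated with the samples defining $\tilde\delta_{s,u}$. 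Since $\hat\cL_{t-2}$ has no knowledge of $\tilde\cL_{t-1}$, the ratio $\tilde\cL_{t-1}^2/(4\hat\cL_{t-2})$ is not controlled in general, and your direct absorption fails.

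The paper closes this gap with a segmentation argument (in the spirit of the AC-PG analysis): it introduces the indicator $\cI(t)=\mathbf{1}\{\hat\cL_{t-1}<\tfrac{\sqrt{3}}{2}\tilde\cL_t\}$, so that when $\cI(t)=0$ one has $3\tilde\cL_t^2/(2\gamma_t)\le\gamma_t/8$ and the term is absorbed into the left-hand side, while the iterations with $\cI(t)=1$ are finitely many (each forces $\hat\cL$ to grow by a factor at least $2/\sqrt{3}$, so there are at most $O(\log(\hat\cL/\bar\cL_0))$ of them) and contribute a geometric sum bounded by $3\hat\cL^2 D_X^2/\bar\cL_0$. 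This extra term is exactly what produces the constant $\tfrac{7\hat\cL^2 D_X^2}{2\bar\cL_0}$ in \eqref{res_thm7_1}, and it is not recoverable by your direct-absorption route.
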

\begin{proof}
Recall the definition $\tilde \delta_t := \tilde G_t - \nabla f(x_{t-1})$. By applying the same reasoning used to derive Ineq.~\eqref{eq_2_2} in the proof of Theorem~\ref{thm_2}, we obtain
\begin{align}\label{proof_ac_vr_1}
 (\gamma_t - \tfrac{\bar \cL_t}{2})\|x_{t-1} - x_t\|^2 + \langle \hat \delta_t, x_t - x_{t-1}\rangle  - (\tfrac{L_t}{2}- \tfrac{\bar \cL_t}{2})\|x_{t-1} - x_t\|^2\leq f(x_{t-1}) - f(x_t).
\end{align}
By Cauchy-Schwarz inequality and Young's inequality, we have $\tfrac{\gamma_t}{4}\|x_{t-1} - x_t\|^2 + \langle \tilde \delta_t, x_t - x_{t-1}\rangle \geq - \tfrac{\|\tilde \delta_t\|^2}{\gamma_t}$, thus
\begin{align*}
 (\tfrac{3\gamma_t}{4} - \tfrac{\bar \cL_t}{2})\|x_{t-1} - x_t\|^2   \leq f(x_{t-1}) - f(x_t) + \tfrac{\|\tilde \delta_t\|^2}{\gamma_t} + (\tfrac{L_t}{2}- \tfrac{\bar \cL_t}{2})\|x_{t-1} - x_t\|^2.
\end{align*}
Invoking $\bar \cL_t \leq \hat \cL_t$ and $\gamma_t = 4\hat \cL_{t-1}$, we obtain
\begin{align}\label{proof_ac_vr_2}
 \tfrac{5\gamma_t}{8}\|x_{t-1} - x_t\|^2   &\leq f(x_{t-1}) - f(x_t) + (\tfrac{\bar \cL_t}{2} - \tfrac{\hat \cL_{t-1}}{2})\|x_{t-1}-x_t\|^2 + \tfrac{\|\tilde \delta_t\|^2}{\gamma_t} + (\tfrac{L_t}{2}- \tfrac{\bar \cL_t}{2})\|x_{t-1} - x_t\|^2\nn\\
 & \leq f(x_{t-1}) - f(x_t) + (\tfrac{\hat \cL_t}{2} - \tfrac{\hat \cL_{t-1}}{2})D_X^2 + \tfrac{\|\tilde \delta_t\|^2}{\gamma_t} + (\tfrac{L_t}{2}- \tfrac{\bar \cL_t}{2})\|x_{t-1} - x_t\|^2.
\end{align}
Meanwhile, we have
\begin{align}\label{proof_ac_vr_3}
    \tfrac{5\gamma_t}{8}\|x_{t-1} - x_t\|^2 &= \tfrac{1}{2\gamma_t}\|\tilde g_{X, t-1}\|^2 + \tfrac{\gamma_t}{8}\|x_{t-1} - x_t\|^2\nn\\
    & \geq \tfrac{1}{4\gamma_t}\|g_{X, t-1}\|^2 - \tfrac{1}{2\gamma_t}\|\tilde g_{X, t-1} - g_{X, t-1}\|^2+ \tfrac{\gamma_t}{8}\|x_{t-1} - x_t\|^2\nn\\
    & \geq \tfrac{1}{4\gamma_t}\|g_{X, t-1}\|^2 - \tfrac{1}{2\gamma_t}\|\tilde \delta_t\|^2+ \tfrac{\gamma_t}{8}\|x_{t-1} - x_t\|^2.
\end{align}
By combining Ineqs.~\eqref{proof_ac_vr_2} and \eqref{proof_ac_vr_3}, we arrive at 
\begin{align*}
 \tfrac{1}{4\gamma_t}\|g_{X, t-1}\|^2 + \tfrac{\gamma_t}{8}\|x_{t-1} - x_t\|^2   \leq f(x_{t-1}) - f(x_t) + (\tfrac{\hat \cL_t}{2} - \tfrac{\hat \cL_{t-1}}{2})D_X^2 + \tfrac{3\|\tilde \delta_t\|^2}{2\gamma_t} + (\tfrac{L_t}{2}- \tfrac{\bar \cL_t}{2})\|x_{t-1} - x_t\|^2.
\end{align*}
Taking the telescope sum of the inequality above from $t=1,...,k$, we obtain
\begin{align}
&\tsum_{t=1}^k \tfrac{1}{4\gamma_t}\|g_{X, t-1}\|^2+\tsum_{t=1}^k\tfrac{\gamma_t}{8}\|x_{t-1} - x_t\|^2 \nn\\
&\leq f(x_0) - f(x_k) + \tfrac{\hat \cL_kD_X^2}{2} + \tsum_{t=1}^k \tfrac{3\|\tilde \delta_t\|^2}{2\gamma_t} + \tsum_{t=1}^k (\tfrac{L_t}{2}- \tfrac{\bar \cL_t}{2})\|x_{t-1} - x_t\|^2\nn\\
&\leq f(x_0) - f(x^*) + \tfrac{\hat \cL_kD_X^2}{2} + \tsum_{t=1}^k \tfrac{3\|\tilde \delta_t\|^2}{2\gamma_t} + \tsum_{t=1}^k (\tfrac{L_t}{2}- \tfrac{\bar \cL_t}{2})\|x_{t-1} - x_t\|^2.
\end{align}
By taking the expectation on both sides of the above inequality, and invoking the unbiasedness of $\bar \cL_t$ due to Assumption~\ref{assump_unbiasness}, we have
\begin{align}\label{proof_ac_vr_4}
&\bbe\left[\tsum_{t=1}^k \tfrac{1}{4\gamma_t}\|g_{X, t-1}\|^2\right]+ \bbe\left[\tsum_{t=1}^k\tfrac{\gamma_t}{8}\|x_{t-1} - x_t\|^2\right] \leq f(x_0) - f(x^*) + \tfrac{\hat \cL D_X^2}{2} + \bbe\left[\tsum_{t=1}^k \tfrac{3\|\tilde \delta_t\|^2}{2\gamma_t}\right].
\end{align}
It remains to bound the term $\bbe\left[\tsum_{t=1}^k \tfrac{3\|\tilde \delta_t\|^2}{2\gamma_t}\right]$. We approach this by dividing it into epochs, i.e., for $s \geq 1$ and $1 \leq U \leq T$,
\begin{align*}
&\bbe\left[\tsum_{u = 1}^{U} \tfrac{3\|\tilde \delta_{s, u}\|^2}{2\gamma_{s, u}}\right]\nn\\
&\overset{(i)}\leq\tfrac{3}{2}\bbe\left[\tsum_{u = 1}^{U} \left(\tsum_{j=2}^u \tfrac{\tilde \cL^2_{s, j-1}}{\gamma_{s, j-1}b_{s, j}}\|x_{s, j-1} - x_{s, j-2}\|^2 + \tfrac{\sigma^2}{\gamma_{s, 1} N}\right)\right]\nn\\
&= \tfrac{3}{2}\bbe\left[\tsum_{u=1}^{U-1} \tfrac{(U-u)\tilde \cL^2_{s, u}}{\gamma_{s, u}b_{s, u+1}}\|x_{s, u} - x_{s, u-1}\|^2 \right]+ \bbe\left[\tfrac{3U\sigma^2}{2\gamma_{s,1} N}\right]\nn\\
&\overset{(ii)}\leq \tfrac{3}{2}\bbe\left[\tsum_{u=1}^{U-1} \tfrac{\tilde \cL^2_{s, u}}{\gamma_{s, u}}\|x_{s, u} - x_{s, u-1}\|^2 \right]+ \bbe\left[\tfrac{3U\sigma^2}{2\gamma_{s,1} N}\right],
\end{align*}
where step (i) follows from Lemma~\ref{bound_error_variance_2}, and step (ii) uses from $b_{s,u}=T$. Summing up over all epochs yields
\begin{align*}
    \bbe\left[\tsum_{t=1}^k \tfrac{3\|\tilde \delta_t\|^2}{2\gamma_t}\right] \leq \bbe\left[ \tsum_{t=1}^k \tfrac{3\tilde \cL_t^2}{2\gamma_t}\|x_t - x_{t-1}\|^2\right] + \tfrac{3k \sigma^2}{2\gamma_t N} = \bbe\left[ \tsum_{t=1}^k \tfrac{3\tilde \cL_t^2}{2\gamma_t}\|x_t - x_{t-1}\|^2\right] + \tfrac{3k \sigma^2}{8 \bar \cL_0 N},
\end{align*}
where the last step follows from $\gamma_t \geq \gamma_1 = 4 \bar \cL_0$. 
Let us define an indicator function $\cI$ such that 
\begin{align*}
\cI(t) := \begin{cases}
    0, &\text{if $\hat \cL_{t-1} \geq \tfrac{\sqrt{3}}{2}\tilde \cL_t$}\\
    1, &\text{if $\hat \cL_{t-1} < \tfrac{\sqrt{3}}{2}\tilde \cL_t$}.
\end{cases}
\end{align*}
Note that $\tsum_{t=1}^k \cI(t) \leq \lfloor\log_{\frac{2\sqrt{3}}{3}}\tfrac{\hat \cL}{\bar \cL_0}\rfloor + 1$. Using $\gamma_t = 4\hat \cL_{t-1}$, we have
\begin{align}\label{proof_ac_vr_5}
    \bbe\bigg[\tsum_{t=1}^k \tfrac{3\|\tilde \delta_t\|^2}{2\gamma_t}\bigg] &\leq \bbe\bigg[ \sum_{1\leq t\leq k, \cI(t)=0} \tfrac{2 \hat \cL_{t-1}^2}{\gamma_t}\|x_t - x_{t-1}\|^2\bigg]+  \bbe\bigg[ \sum_{1\leq t\leq k, \cI(t)=1} \tfrac{3\tilde \cL_t^2}{2\gamma_t}\|x_t - x_{t-1}\|^2\bigg] + \tfrac{3k \sigma^2}{8 \bar \cL_0 N}\nn\\
    &\leq \bbe\bigg[ \sum_{1\leq t\leq k, \cI(t)=0} \tfrac{\gamma_t}{8}\|x_t - x_{t-1}\|^2\bigg]+  \bbe\bigg[ \sum_{1\leq t\leq k, \cI(t)=1} \tfrac{3\tilde \cL_t \hat \cL}{8 \bar \cL_0}D_X^2\bigg] + \tfrac{3k \sigma^2}{8 \bar \cL_0 N}\nn\\
    &\overset{(i)} \leq \bbe\bigg[ \sum_{1\leq t\leq k, \cI(t)=0} \tfrac{\gamma_t}{8}\|x_t - x_{t-1}\|^2\bigg] + \tfrac{3 \hat \cL^2 D_X^2}{(8-4\sqrt{3})\bar \cL_0}+ \tfrac{3k \sigma^2}{8 \bar \cL_0 N}\nn\\
    & \leq \bbe\bigg[ \sum_{1\leq t\leq k, \cI(t)=0} \tfrac{\gamma_t}{4}\|x_t - x_{t-1}\|^2\bigg] + \tfrac{3 \hat \cL^2 D_X^2}{\bar \cL_0}+ \tfrac{3k \sigma^2}{8 \bar \cL_0 N},
\end{align}
where the step (i) follows from that for two consecutive $\cI(t) = 1$ points, denoted as $t_1 < t_2$, we have $\tilde \cL_{t_2} > \tfrac{2}{\sqrt{3}}\cL_{t_1}$. Finally, by combining Ineqs.~\eqref{proof_ac_vr_4} and \eqref{proof_ac_vr_5}, and invoking the output rule \eqref{random_output_ac_vr}, we obtain
\begin{align}\label{proof_ac_vr_6}
 \bbe\left[\tsum_{t=1}^k \tfrac{1}{4\gamma_t}\|g_{X, R(k)}\|^2\right] = \bbe\left[\tsum_{t=1}^k \tfrac{1}{4\gamma_t}\|g_{X, t-1}\|^2\right]&\leq f(x_0) - f(x^*) + \tfrac{\hat \cL D_X^2}{2} + \tfrac{3 \hat \cL^2 D_X^2}{\bar \cL_0}+ \tfrac{3k \sigma^2}{8 \bar \cL_0 N}\nn\\
& \leq f(x_0) - f(x^*)  + \tfrac{7 \hat \cL^2 D_X^2}{2\bar \cL_0}+ \tfrac{3k \sigma^2}{8 \bar \cL_0 N},
\end{align}
which completes the proof.
\end{proof}

\vgap
We now add a few remarks about the complexity bounds of AC-VR-SPG. For simplicity, let us assume $\nabla f(x^*) = 0$, so that $f(x_t) - f(x^*) = f(x_t) - f(x^*) - \langle \nabla f(x^*), x_t - x^* \rangle \leq \tfrac{LD_X^2}{2}$. In order to find an $\epsilon$-stationary point, we need to set $N$ on the order of $\tfrac{\hat \cL \sigma^2}{\bar \cL_0 \epsilon^2}$. This requires estimating the quantity $\tfrac{\hat \cL \sigma^2}{\bar \cL_0}$, meaning the method is not fully parameter-free. However, in practical scenarios, we could obtain rough empirical estimations of $\sigma^2$ and $\hat \cL$ by running AC-VR-SPG for a few iterations. Additionally, we set $T = \sqrt{N}$. The total number of iterations required by AC-VR-SPG to find an $\epsilon$-approximate solution is bounded by
\begin{align*}
\O\left(\tfrac{\hat \cL^3 D_X^2}{\bar \cL_0 \epsilon^2} \right).
\end{align*}
Using the batch size parameters $N$ and $b_t$ defined in Theorem \ref{thm_7}, the corresponding sample complexity is given by
\begin{align*}
\O\left(\tfrac{\hat \cL^3 D_X^2}{\bar \cL_0 \epsilon^2} + \tfrac{\hat \cL^{3.5} \sigma D_X^2}{ \bar \cL_0^{1.5}\epsilon^3} \right).
\end{align*}
It should be noted that, due to the adaptive selection of  stepsizes, the resulting complexity bounds do not depend explicitly on $l$. 
Nevertheless, in terms of the dependence on $\epsilon$, this complexity still improves the sample complexity of AC-SPG by a factor of $\mathcal{O}(\epsilon^{-1})$. Moreover, the same two-stage procedure outlined in Section~\ref{sec_stoch_deviation} can be applied to AC-VR-SPG to obtain high probability guarantees. For the sake of brevity, we omit these details to avoid redundancy.

\section{Numerical experiments}\label{sec_numerics}
In this section, we provide two sets of numerical experiments to demonstrate the empirical performance of the proposed algorithms. The major purpose of conducting the experiments is to show the effectiveness and advantage of the auto-conditioning and variance reduction techniques.
\subsection{Box-constrained nonconvex quadratic programming}
The first set of experiments is conducted by applying the PG method in Algorithm~\ref{alg_1_0} and the AC-PG method in Algorithm~\ref{alg_1} to solve the box-constrained quadratic program
\begin{equation}
\min_{x\in\mathbb{R}^n} f(x):=\frac{1}{2}x^\top Q x + c^\top x, ~~\text{s.t.}~~ x_i \in [a_i, b_i], \forall\, i = 1,\ldots, n,    
\end{equation}
where $Q\in\mathbb{R}^{n\times n}$ is a symmetric indefinite matrix. In the experiments, we generate $Q= \frac{1}{2}(\tilde Q + \tilde Q^\top)$, where components of $\tilde Q$ follow independent and identically  standard Gaussian distribution; $c$ is generated by standard Gaussian distribution as well. For the PG method, we set $\gamma_t = \|Q\|, \forall\, t$, and for the AC-PG, we set the initial estimate of Lipschitz constant to $L_0 = \theta \|Q\|$ with $\theta\in \{0.1, 0.2, 0.5, 0.001\}$. \revision{}{To avoid potential computational issues, we add a tiny perturbation when estimating the local Lipschitz constant $L_t$, i.e., $L_t = \tfrac{2(f(x_{t}) - f(x_{t-1}) - \langle \nabla f(x_{t-1}), x_t - x_{t-1}\rangle)}{\|x_t - x_{t-1}\|^2 + 10^{-10}}.$ We apply similar treatments to experiments in the next subsection.} Also, we set $n=100$ and $a_i = -5$ and $b_i = 5$ for each $i=1,\ldots,n$. \revision{}{Notice that without the compact box constraint, the problem may have an unbounded objective because $Q$ is indefinite.} Ten independent trials are run. The average and standard deviation results of the norm of the projected gradient mapping at each iterate are plotted in the left of Figure~\ref{fig:qp}. The average values and standard deviation of the estimated local Lipschitz constant are shown in the right of Figure~\ref{fig:qp}. The results clearly show the effectiveness of the proposed AC-PG method and advantage over the classic PG method. The very small number of $\theta=0.001$ is used to demonstrate that even if starting from a significant underestimate of the Lipschitz constant, AC-PG can automatically adjust the estimate and quickly converge to a stationary solution.

\begin{figure}[ht]
    \centering
\begin{tabular}{cc}
    \includegraphics[width=0.45\linewidth]{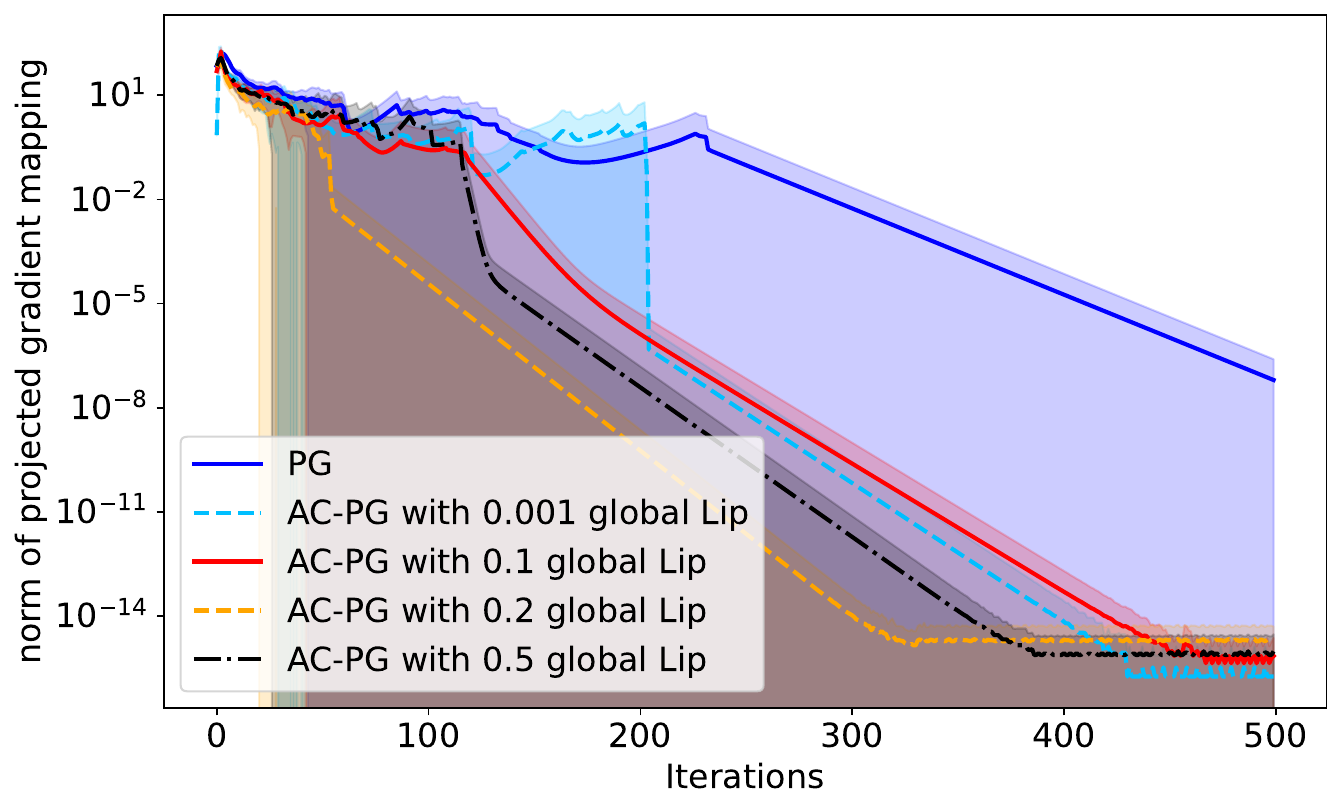} &
    \includegraphics[width=0.435\linewidth]{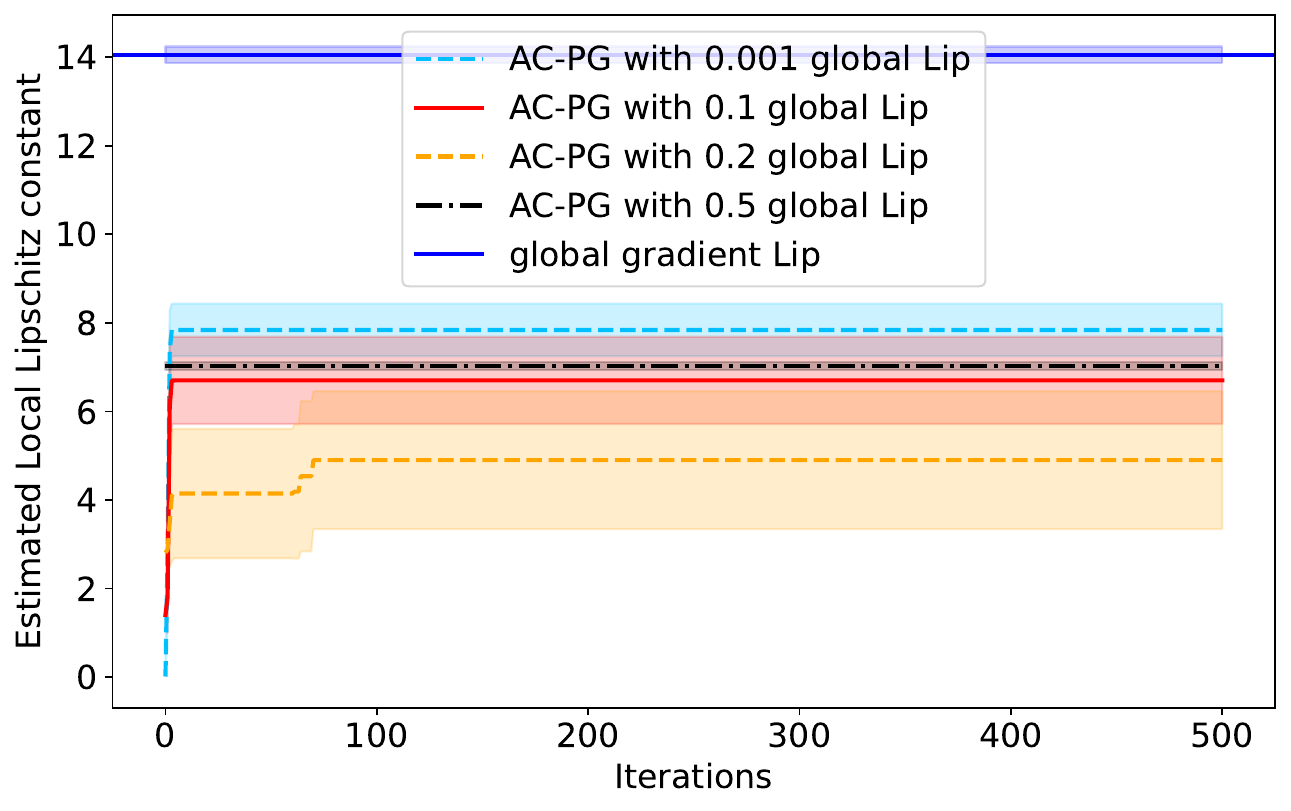}
\end{tabular}     
    \caption{Average and standard deviation results by the PG and AC-PG methods on solving 10 independently generated instances of box-constrained quadratic programming. Left: norm of projected gradient mapping at each iteration; Right: estimated local gradient Lipschitz constants by AC-PG starting from different initial estimates.}
    \label{fig:qp}
\end{figure}

\subsection{Semisupervised smoothed support vector machine}
The second set of experiments is conducted by applying the SPG method in Algorithm~\ref{alg_2_0}, the AC-SPG method in Algorithm~\ref{alg_2}, the VR-SPG in Algorithm~\ref{alg_vr}, and the VR-SPG in Algorithm~\ref{alg_ac_vr} on solving the semisupervised smoothed support vector machine (SVM) \cite{ChaSinKee08-1, ghadimi2016mini}, formulated as
\begin{equation}\label{eq:svm}
\min_{x \in\cX, b\in \mathcal{B}} f(x,b):=\lambda_1 \mathbb{E}_{u_1, v}\left[\max\left\{0, 1- v\big(u_1^\top x + b\big)\right\}^2\right] + \lambda_2 \mathbb{E}_{u_2} \exp\left\{-5\big(u_2^\top x+b\big)^2\right\} + \frac{\lambda_3}{2}\|x\|^2. 
\end{equation}
Here, $\cX\subseteq \mathbb{R}^{n}$, $\mathcal{B}\subseteq \mathbb{R}$, and $v\in \{+1, -1\}$ is a label corresponding to the sample $u_1$. In the experiments, we set $n\in\{10,100\}$, $\cX=\{x: \|x\|\le 10\}$ 
and $\mathcal{B} = [-2,2]$, and we choose $\lambda_1=\lambda_2=0.5$ and $\lambda_3=1$. \revision{}{The explicit inclusion of the two sets $\mathcal{X}$ and $\mathcal{B}$ is to ensure that the instance satisfies the assumption of compact domain in our analysis, but it does not lose generality here. Let us denote $(x^*,b^*)$ as the minimizer of the objective of \eqref{eq:svm} over $\mathbb{R}^{n+1}$. With our choice of $\lambda_1, \lambda_2$ and $\lambda_3$, it holds $\frac{1}{2}\|x^*\|^2 \le f(x^*,b^*) \le f(0,0) = 1$. Thus $\|x^*\| \le \sqrt{2}$, i.e., $x^*$ is finite. Suppose $u_1$ is a unit random vector. Then $|u_1^T x^*| \le \|x^*\|$ and $$\max\left\{0, 1- v\big(u_1^\top x^* + b\big)\right\}^2 \ge 
\max\big\{0, 1-\|x^*\| - vb\big\}^2.$$ Let $p_1 = \mathbb{P}(v = +1) > 0$ and $p_2 = \mathbb{P}(v = -1) > 0$. Then the first expectation term at $(x^*,b^*)$ in \eqref{eq:svm} is lower bounded by $\lambda_1 p_1 \max\big\{0, 1-\|x^*\| - b^*\big\}^2 + \lambda_1 p_2 \max\big\{0, 1-\|x^*\| + b^*\big\}^2$. Hence, with $\lambda_1=\lambda_2=0.5$, it holds
$$p_1 \max\big\{0, 1-\|x^*\| - b^*\big\}^2 + p_2 \max\big\{0, 1-\|x^*\| + b^*\big\}^2 \le \frac{1}{\lambda_1} f(x^*,b^*) \le 2,$$
which indicates $1-\|x^*\|  - \sqrt{\frac{2}{p_1}} \le b^* \le \sqrt{\frac{2}{p_2}} - 1+ \|x^*\|$. Therefore, $b^*$ falls in a bounded interval, and if $\|x^*\| \le 1$ and $p_1=p_2=0.5$, then $b^*\in[-2,2]$.}

We adopt two ways to generate data samples. For the first way, we pre-generate a big data set of $M$ samples for $u_1$ and another big data set of $M$ samples for $u_2$, and we set $v=\mathrm{sign}(u_1^\top \bar x + \bar b)$ for each sample $u_1$ in the generated data set, where $\bar x$ and $\bar b$ are generated randomly by the standard Gaussian distribution. Each sample in the two data sets is generated by $ u / \|u\|$ with $u$ following the standard multivariable Gaussian distribution. In this way, the two expectations in \eqref{eq:svm} can be written as the averages with respect to the two finite data sets. We set $M=2\times 10^5$. Due to the data normalization, the global gradient Lipschitz constant of \eqref{eq:svm} can be explicitly computed as $L = 8\lambda_1 + 40\lambda_2(1+e^{-1}) + \lambda_3$. Each method runs to $k=10^3$ iterations. For SPG and VR-SPG, we set $\gamma_t = 2L, \forall\, t$, and for AC-SPG and AC-VR-SPG, we set $\hat L_0 = \theta L$ and $\gamma_t = 3 \hat L_{t-1}$ for all $t$ with $\theta\in \{0.1, 0.2, 0.5, 0.001\}$. In addition, at each iteration of SPG and AC-SPG, we pick $25k$ samples without replacement for $u_1$ and $u_2$ uniformly at random from the pre-generated data sets; for VR-SPG and AC-VR-SPG, we set $T=10$, $N=M$, and $b_t = 5,000$ for each $t$ such that $\mathrm{mod}(t, T) \neq 1$. 
Notice that with this choice, the total samples used by VR-SPG and AC-VR-SPG within $k$ iterations are slightly fewer than those by SPG and AC-SPG. \revision{}{For AC-SPG, we apply Estimator I in \eqref{evaluate_L_t_2} for local Lipschitz estimation.} We run 10 independent trials and report results in Figure~\ref{fig:SVM-det} for the dimension $n=10$ and in Figure~\ref{fig:SVM-det-n100} for the dimension $n=100$, where the projected mapping is computed by using all $M$ samples. The results demonstrate the advantage of variance reduction and the effectiveness of auto conditioning. Again, we see that the auto-conditioned methods starting from a tiny initial estimate (with $\theta=0.001$) of Lipschitz constant can automatically adjust the estimates and converge to a stationary solution. 

\begin{figure}[ht]
    \centering
    \includegraphics[width=0.45\linewidth]{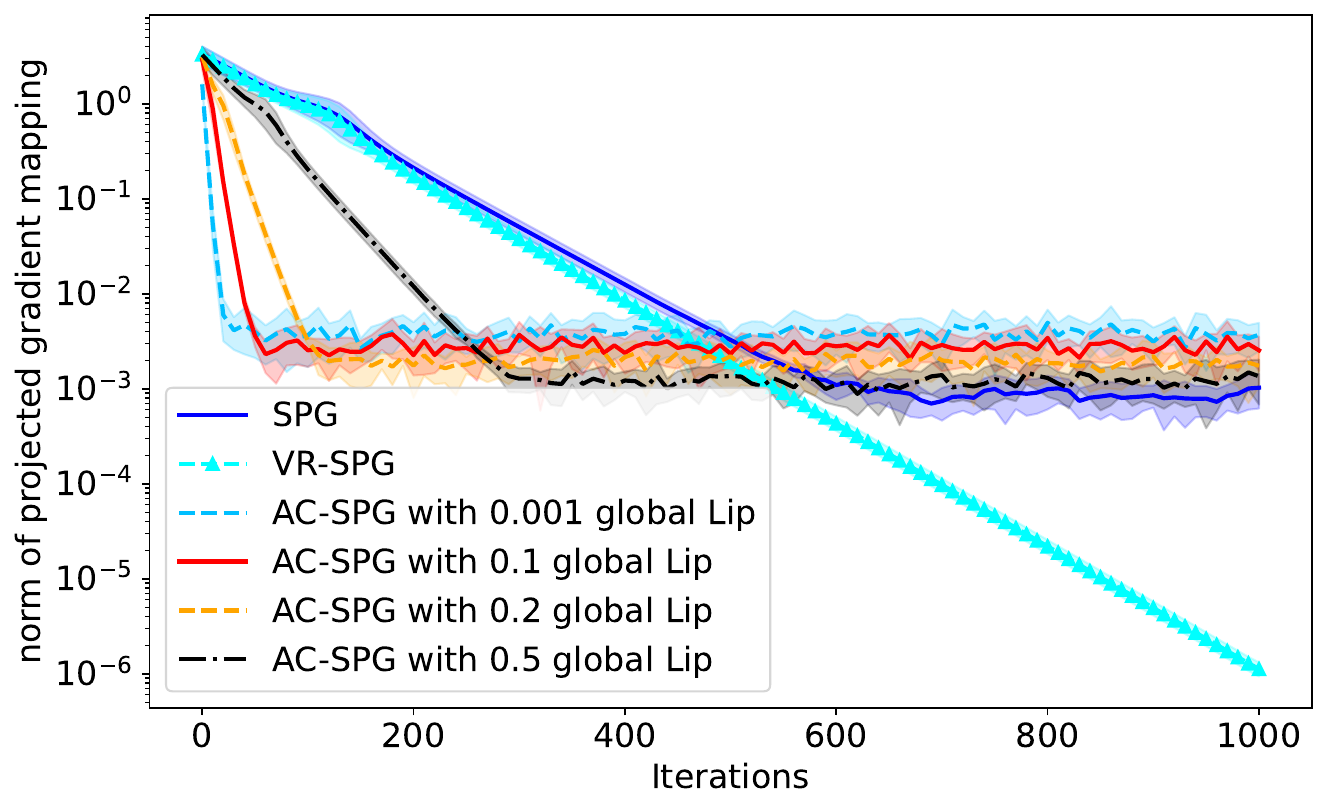}~~
    \includegraphics[width=0.435\linewidth]{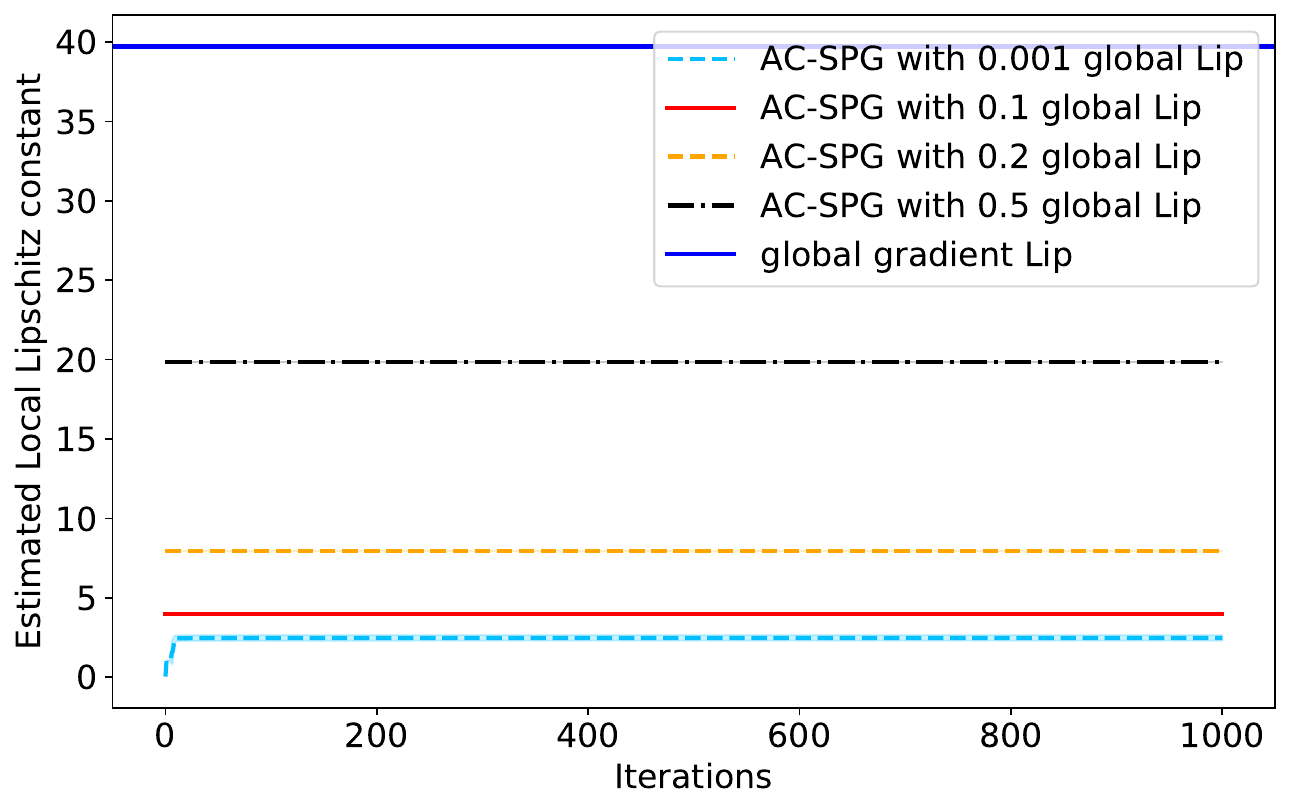} \\
    \includegraphics[width=0.45\linewidth]{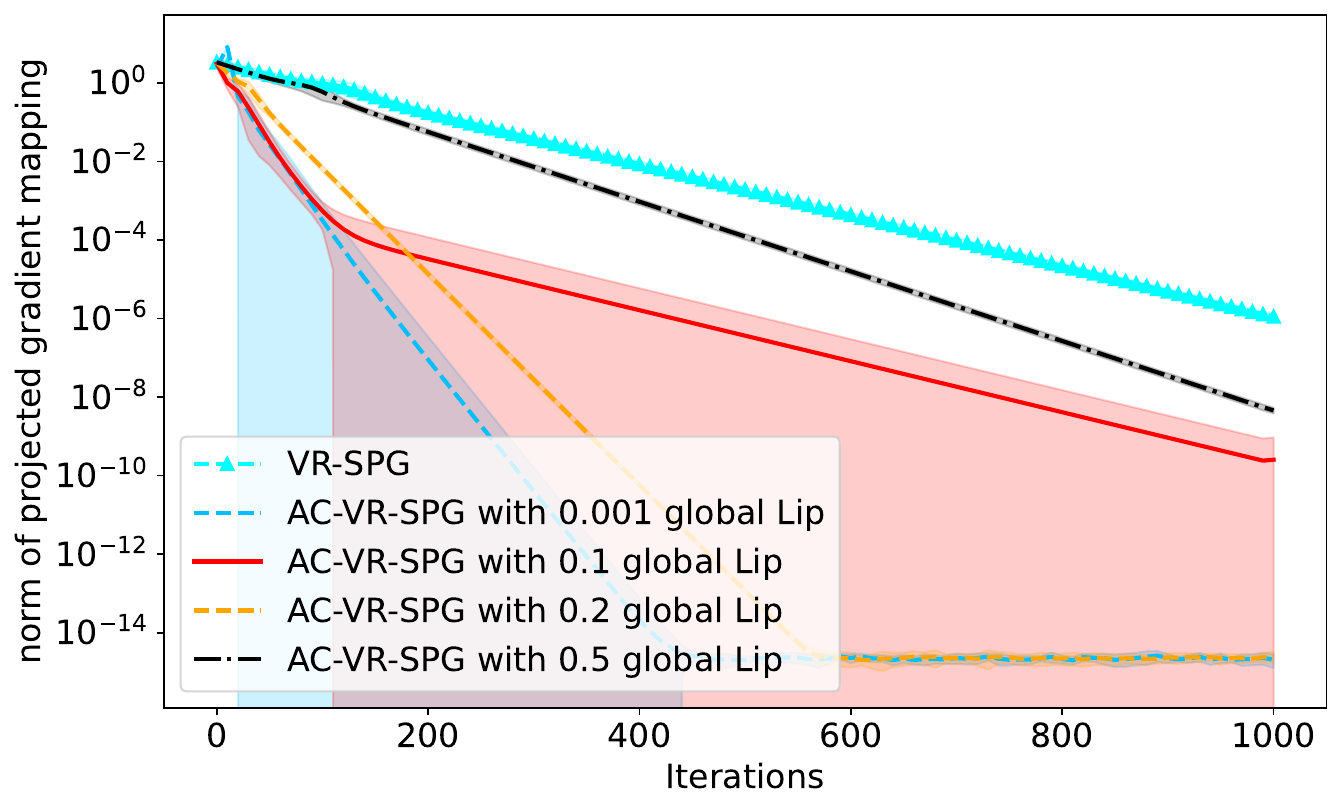}~~
    \includegraphics[width=0.435\linewidth]{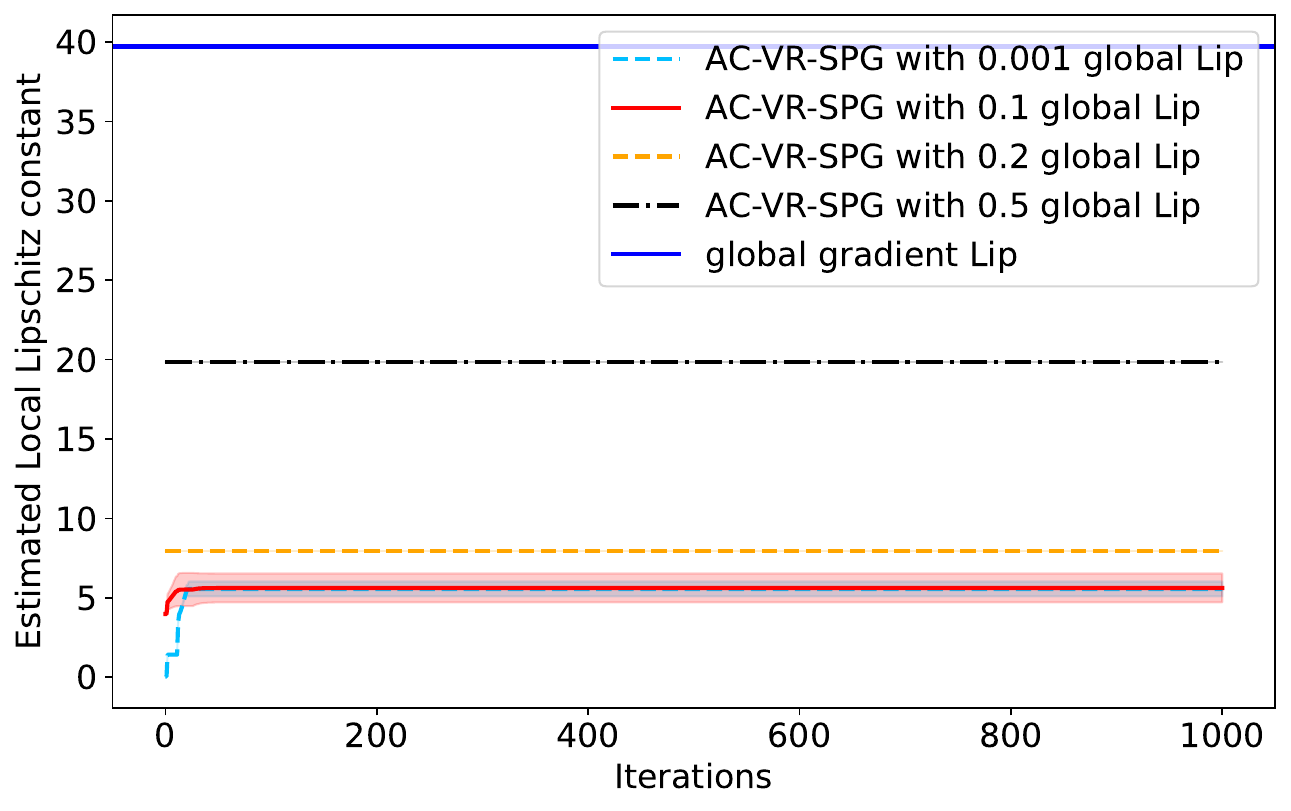}
    \caption{Average and standard deviation results by SPG, VR-SPG, AC-SPG, and AC-VR-SPG on solving 10 independently generated instances of \eqref{eq:svm} of dimension $n=10$ with pre-generated data sets for $u_1$ and $u_2$.}
    \label{fig:SVM-det}
\end{figure}

\begin{figure}[ht]
    \centering
    \includegraphics[width=0.45\linewidth]{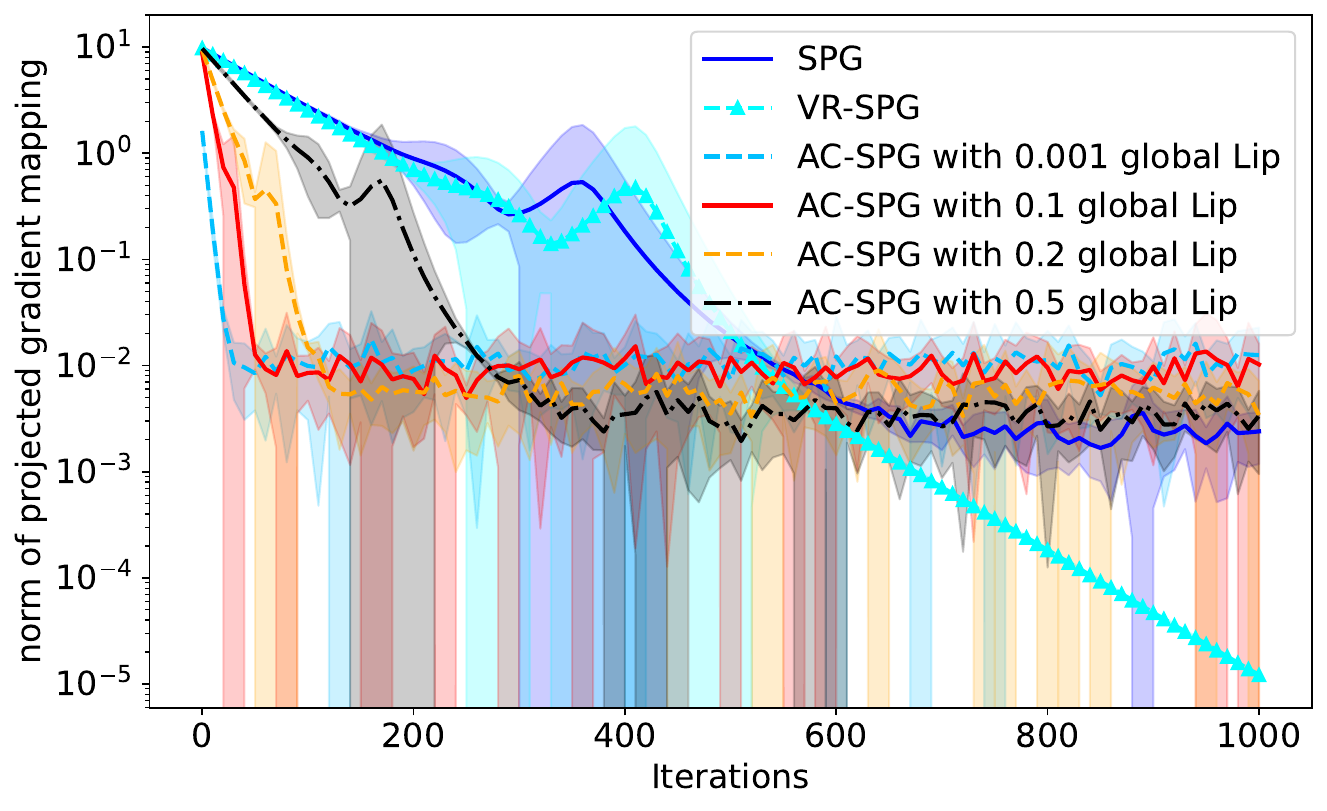}~~
    \includegraphics[width=0.435\linewidth]{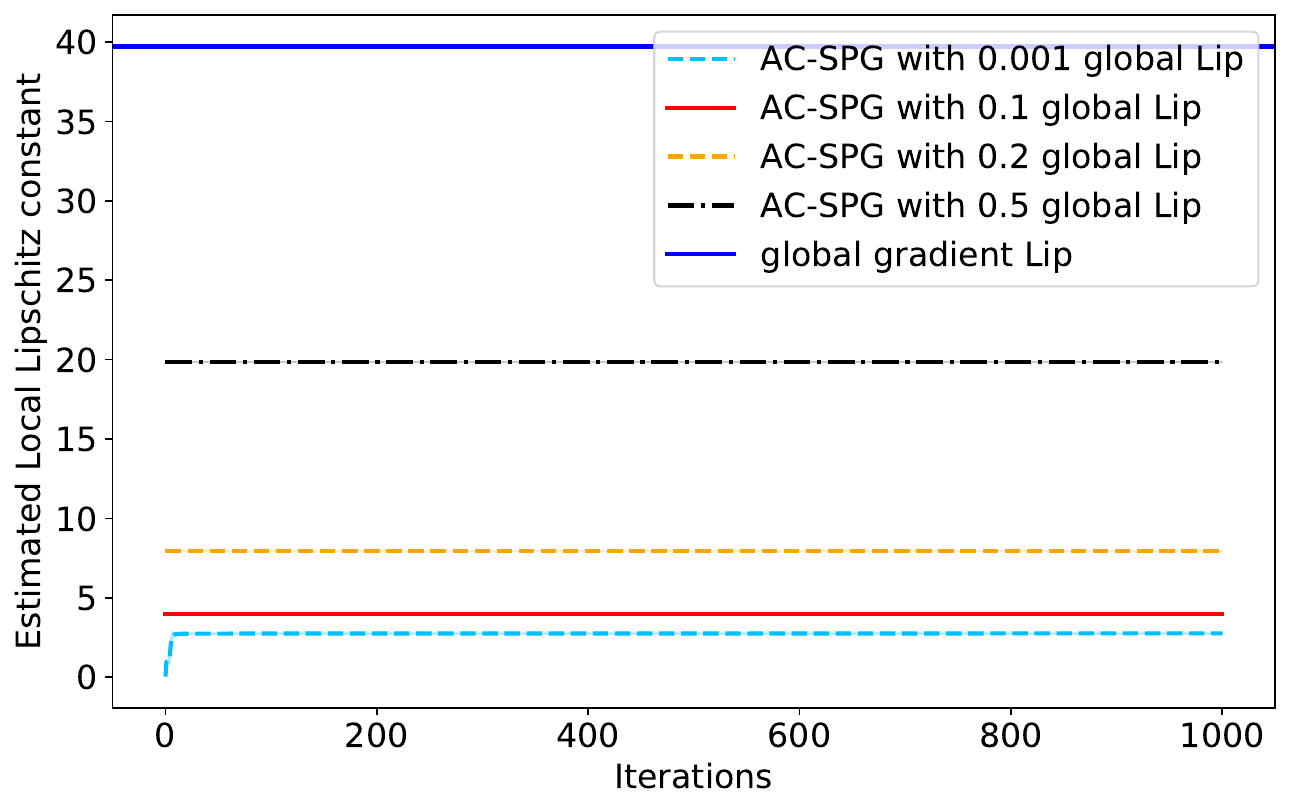} \\
    \includegraphics[width=0.45\linewidth]{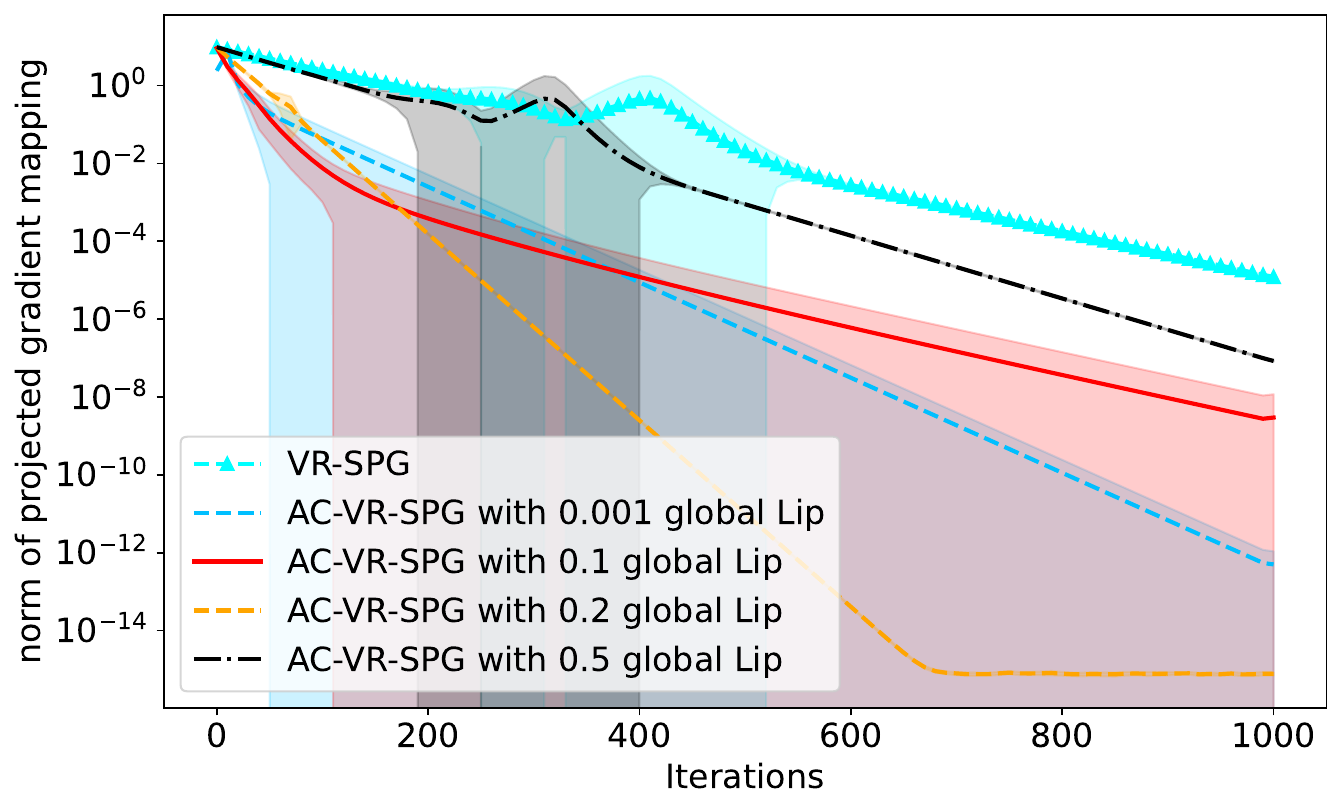}~~
    \includegraphics[width=0.435\linewidth]{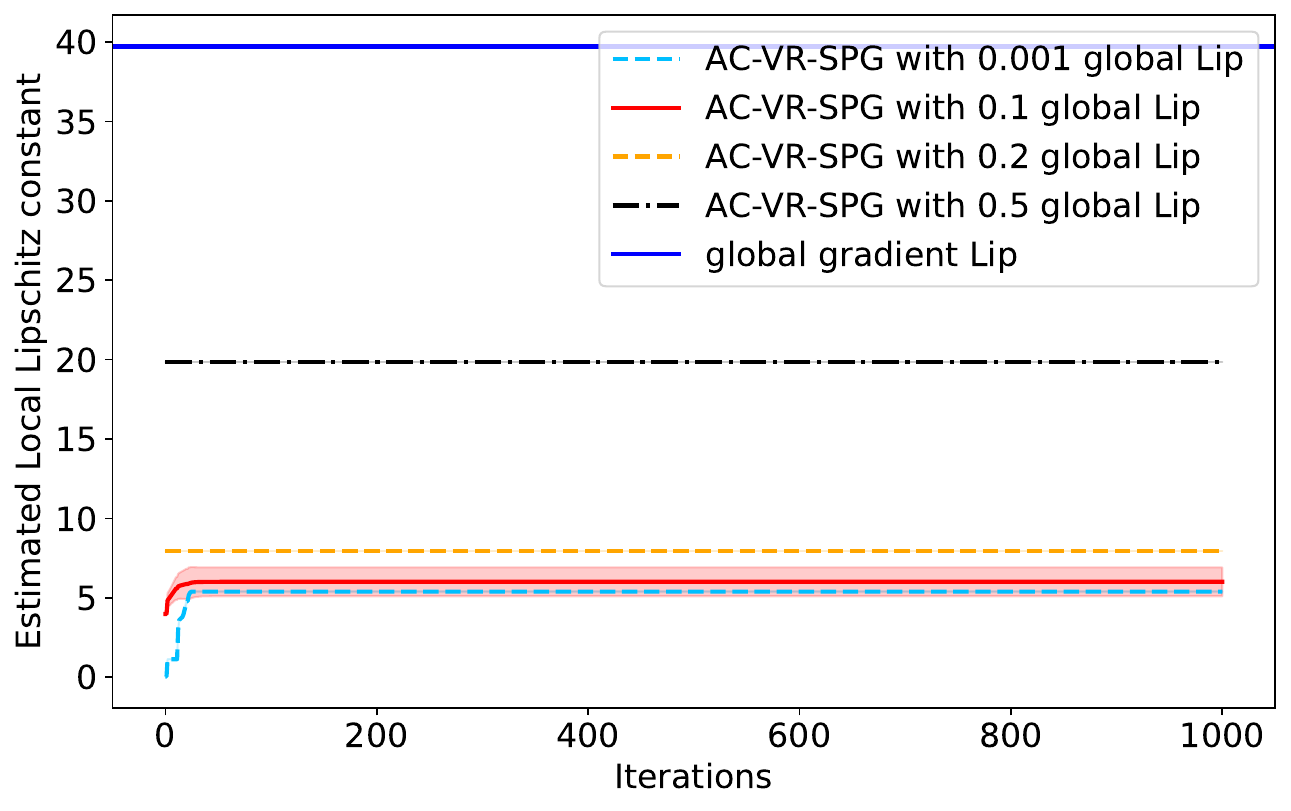}
    \caption{Average and standard deviation results by SPG, VR-SPG, AC-SPG, and AC-VR-SPG on solving 10 independently generated instances of \eqref{eq:svm} of dimension $n=100$ with pre-generated data sets for $u_1$ and $u_2$.}
    \label{fig:SVM-det-n100}
\end{figure}

For the second way, we solve the problem in an online manner, i.e., generating data samples while they are needed. Each sample of $u_1$ and $u_2$ is generated by $ u / \|u\|$ with $u$ following the standard multivariable Gaussian distribution. The global gradient Lipschitz constant is the same as above. At each iteration of SPG and AC-SPG, we randomly generate $25,000$ samples for each of $u_1$ and $u_2$; for VR-SPG and AC-VR-SPG, we set $T=10$, $N=2\times 10^5$, and $b_t = 5,000$ for each $t$ such that $\mathrm{mod}(t, T) \neq 1$.  
All other parameters are set the same as those for the first way of data generation. 
We approximately compute the projected gradient mapping by generating $10^6$ samples at each evaluated iterate. The results of running 10 independent trials are shown in Figure~\ref{fig:SVM-stoc} and Figure~\ref{fig:SVM-stoc-n100}, which again demonstrate the effectiveness of auto-conditioned methods.

\begin{figure}[ht]
    \centering
    \includegraphics[width=0.45\linewidth]{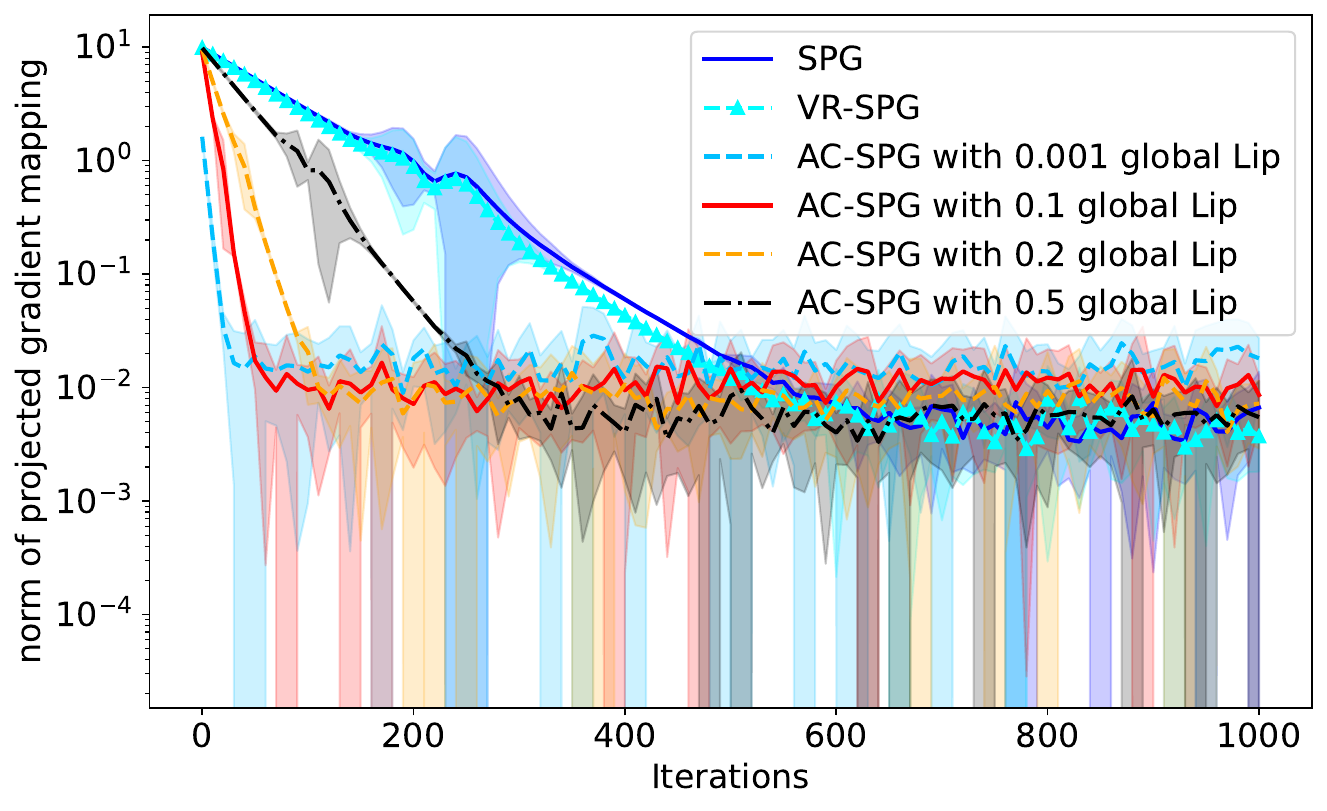}~~
    \includegraphics[width=0.435\linewidth]{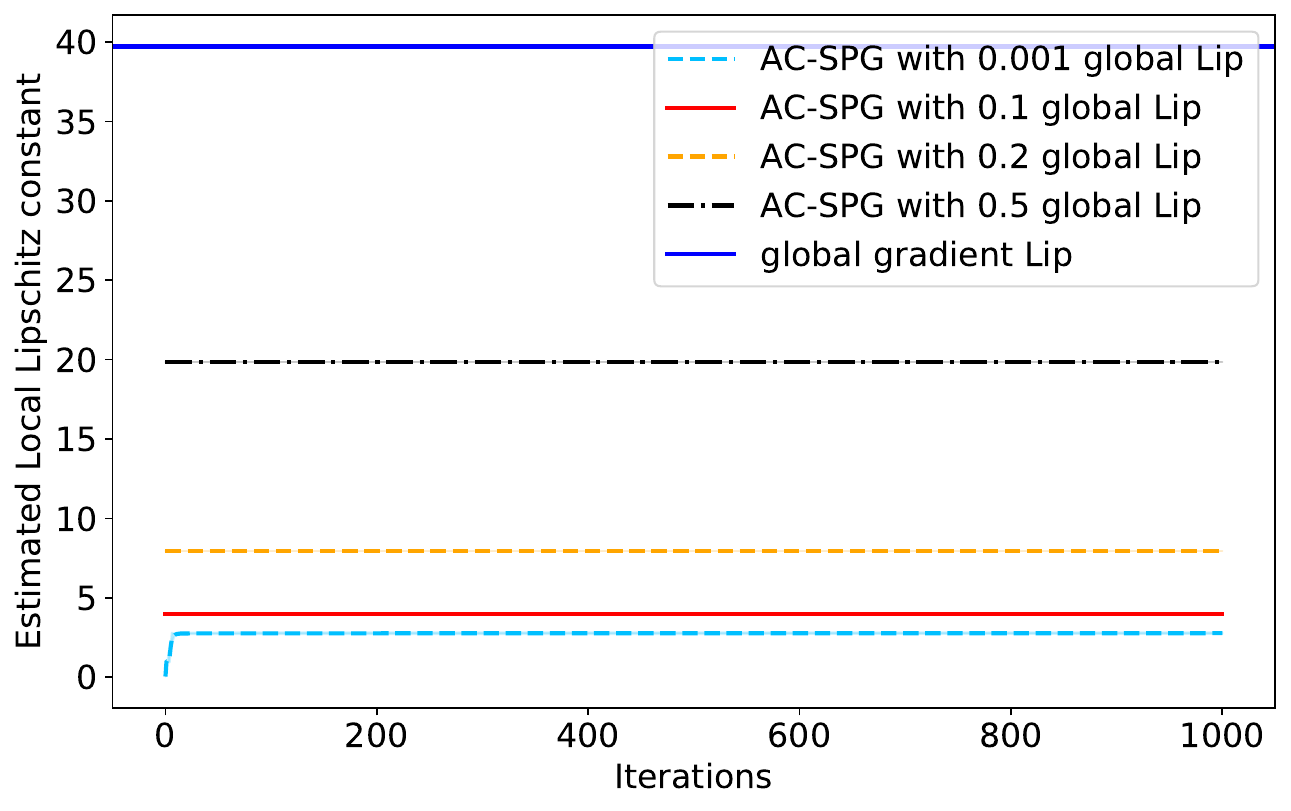} \\
    \includegraphics[width=0.45\linewidth]{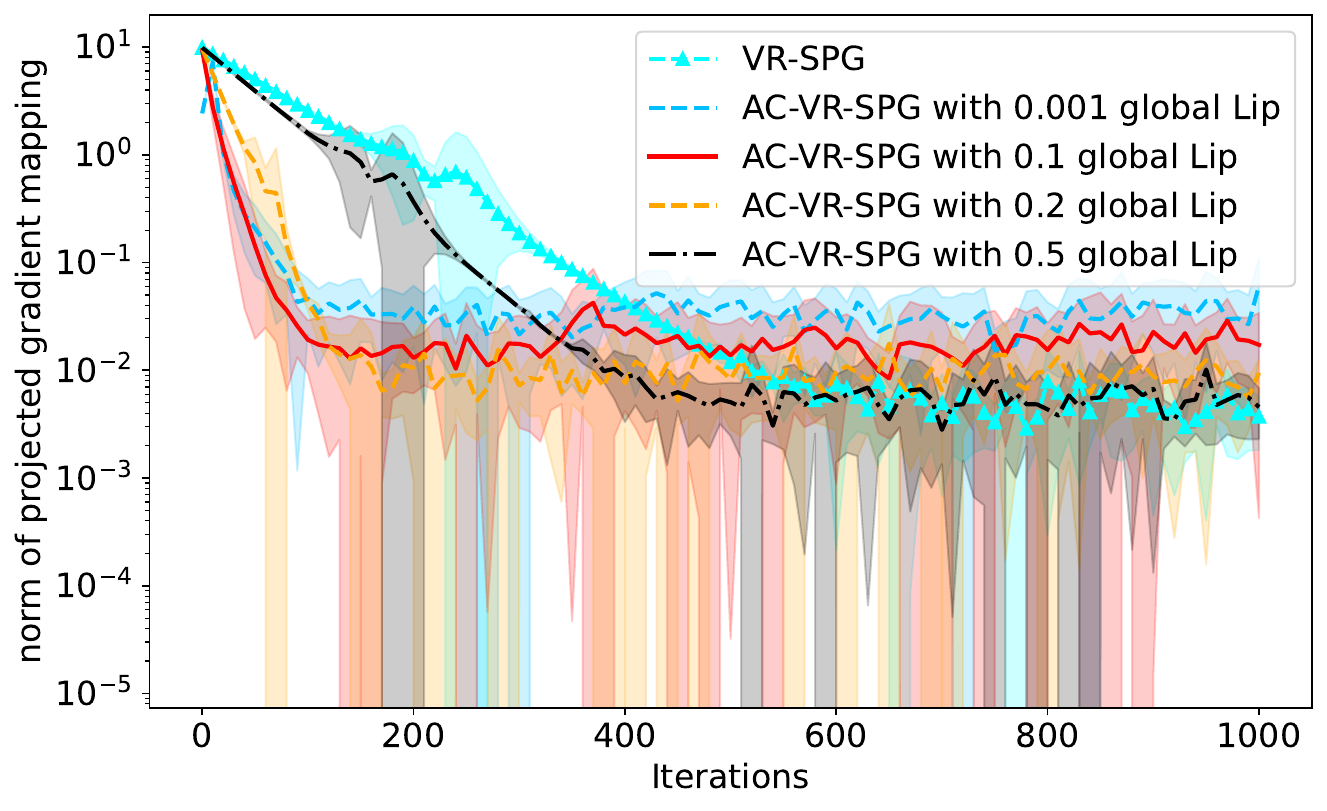}~~
    \includegraphics[width=0.435\linewidth]{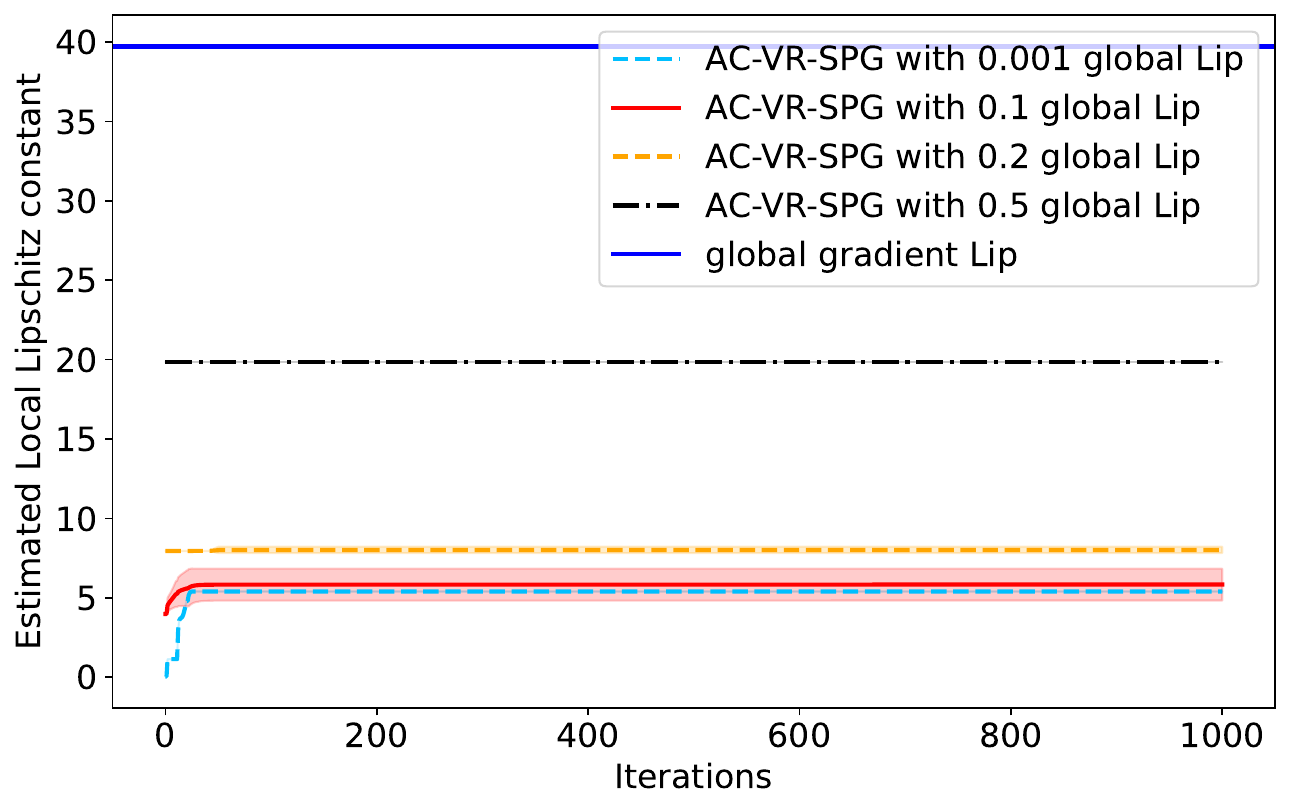}
    \caption{Average and standard deviation results by SPG, VR-SPG, AC-SPG, and AC-VR-SPG on solving 10 independently generated instances of \eqref{eq:svm} of dimension $n=10$ in an online manner, i.e., with samples generated for $u_1$ and $u_2$ while needed.}
    \label{fig:SVM-stoc}
\end{figure}

\begin{figure}[ht]
    \centering
    \includegraphics[width=0.45\linewidth]{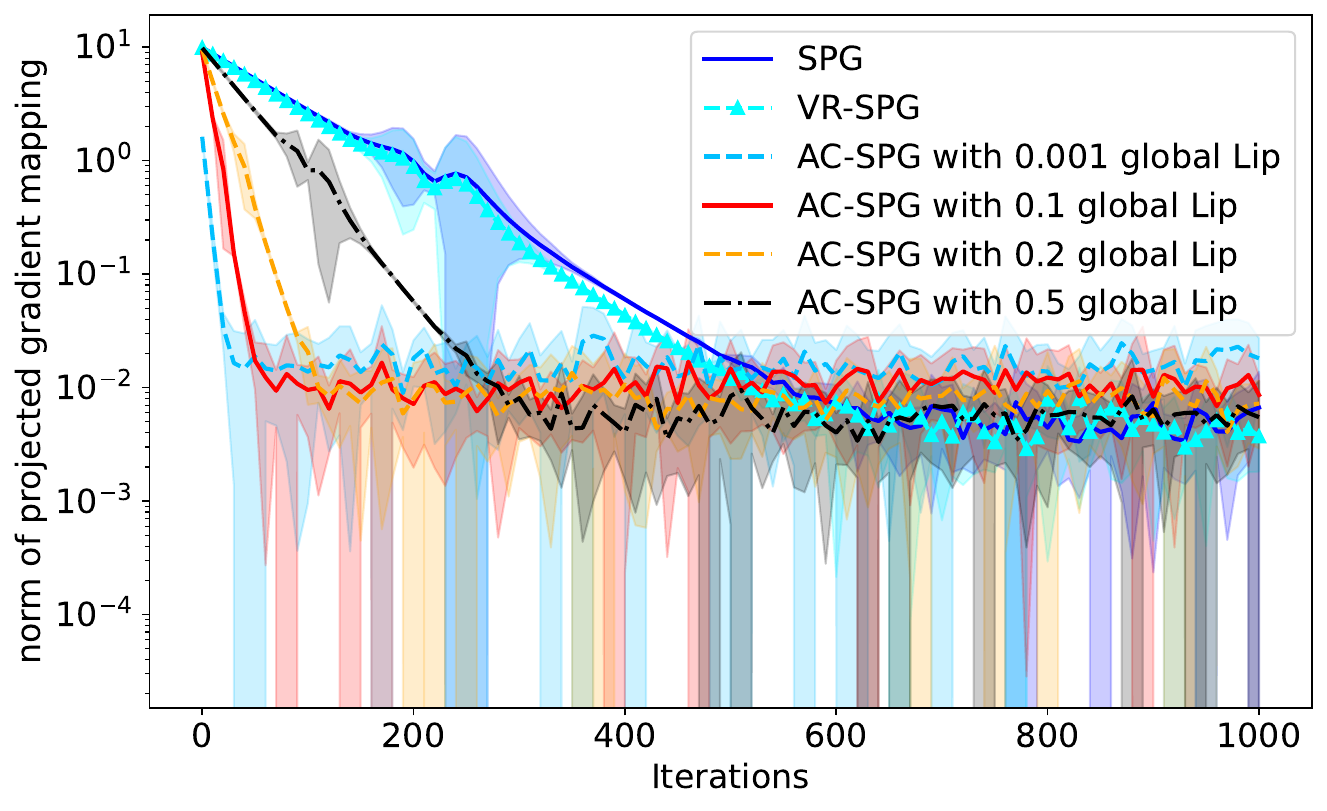}~~
    \includegraphics[width=0.435\linewidth]{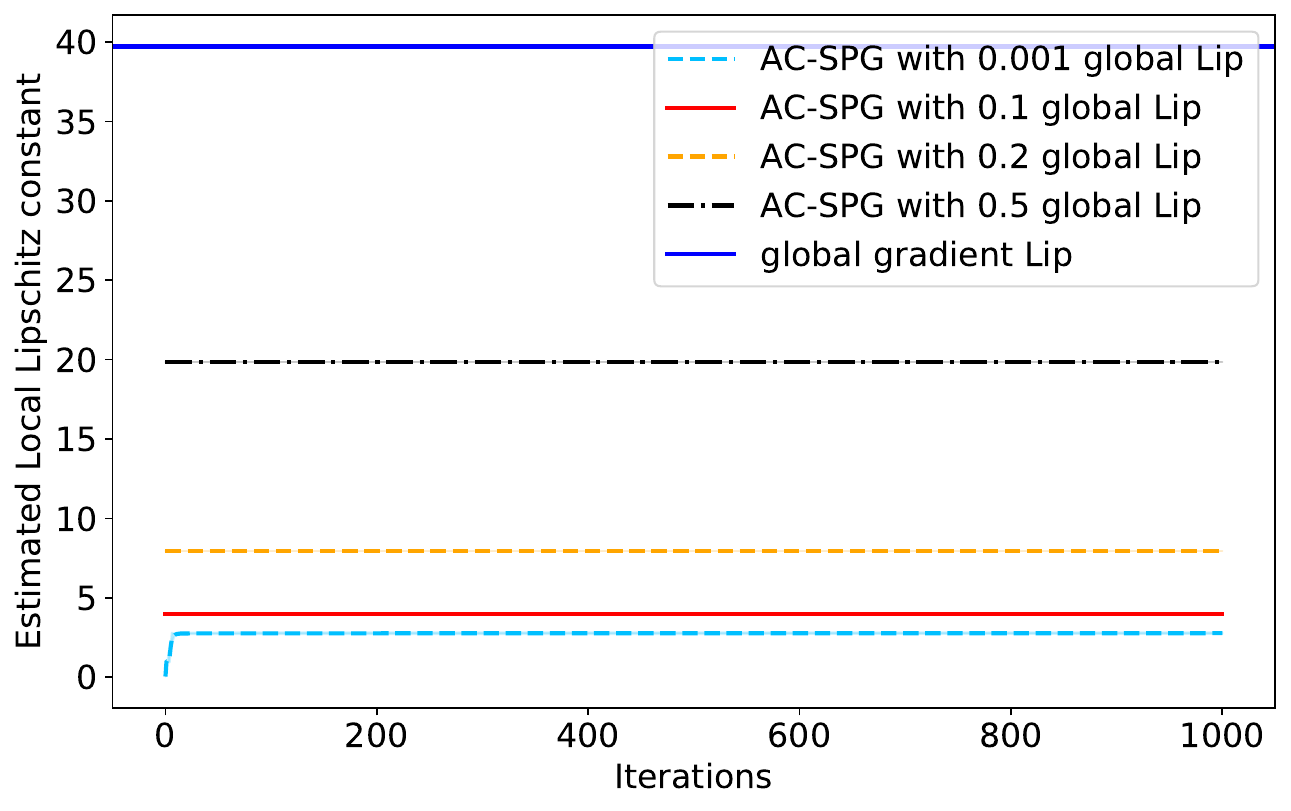} \\
    \includegraphics[width=0.45\linewidth]{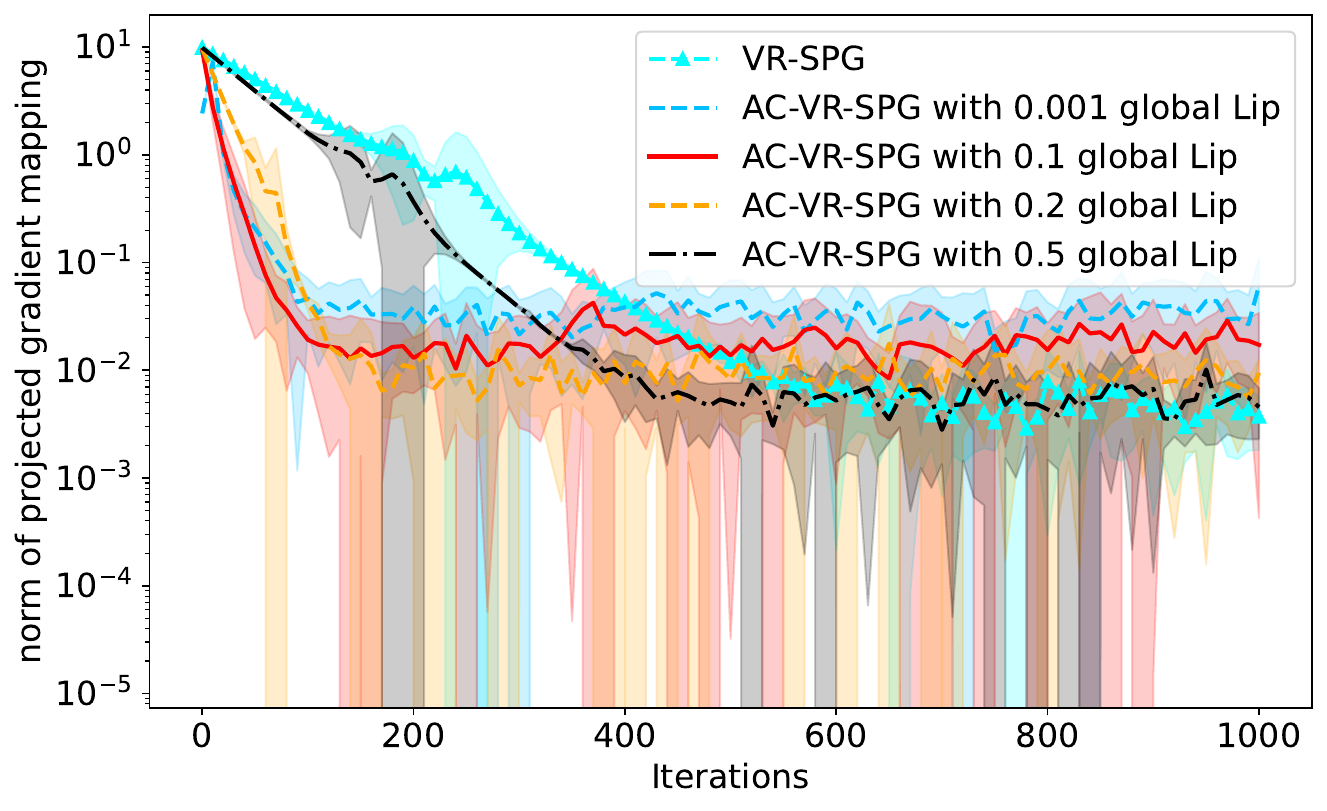}~~
    \includegraphics[width=0.435\linewidth]{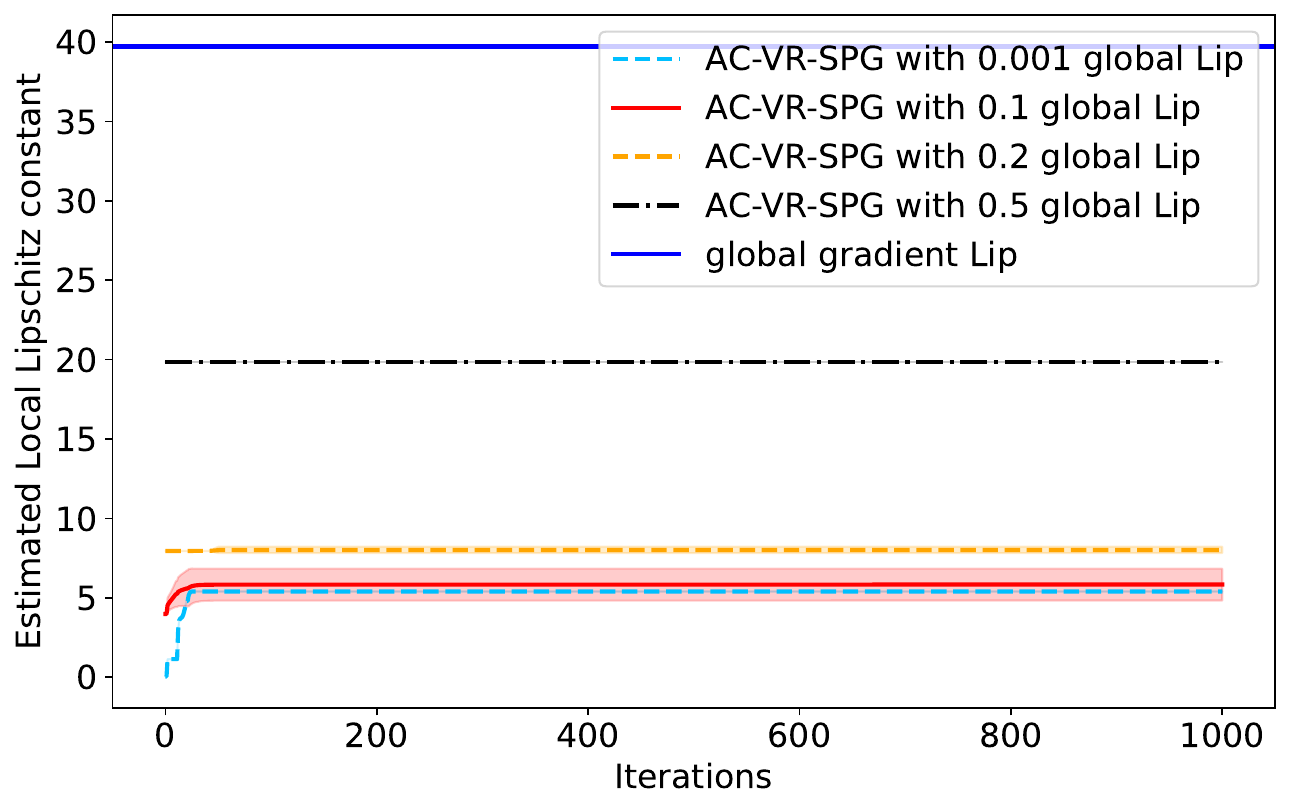}
    \caption{Average and standard deviation results by SPG, VR-SPG, AC-SPG, and AC-VR-SPG on solving 10 independently generated instances of \eqref{eq:svm} of dimension $n=100$ in an online manner, i.e., with samples generated for $u_1$ and $u_2$ while needed.}
    \label{fig:SVM-stoc-n100}
\end{figure}

\section{Concluding remarks}
In summary, we present a novel class of projected gradient (PG) methods for nonconvex optimization over a convex compact set. 
We first provide a novel analysis of the \revision{``vanilla''}{constant-stepsize} PG method, which achieves the best-known iteration complexity, unifying convex and nonconvex optimization. Then we propose the parameter-free and line search-free AC-PG method, which attains the same iteration complexity up to an additive constant. Next, we generalize the PG methods to the stochastic setting by introducing a stochastic projected gradient (SPG) method with new unified complexity bounds. We also provide auto-conditioned stepsize policies for SPG to deal with unknown Lipschitz parameters. Finally, we propose the variance-reduced stochastic gradient (VR-SPG) method under the assumption of Lipschitz continuity of the stochastic gradient. VR-SPG achieves the best-known sample complexity,  which is nearly order-optimal in both convex and nonconvex settings. We also develop the auto-conditioned variant of VR-SPG. Our theoretical guarantees are corroborated by numerical experiments.

\appendix
\section{Appendix}
{\color{black}
\subsection{Extension to the composite optimization setting}\label{appendix_prox_extension}
In this appendix, we extend our unified analysis for PG, i.e., Theorem~\ref{thm_0}, to the composite optimization setting. Specifically, we consider
\begin{align}\label{def_composite_problem_0}
\Psi^* := \min_{x \in X} \{\Psi(x):= f(x) +h(x) \},
\end{align}
where $f$ is a smooth function as defined in the main parts of the paper and $h: X \rightarrow \bbr$ is a ``prox-friendly'' closed convex function such that the ``proximal-mapping'' subproblem
\begin{align*} 
x^+ = \arg \min_{u \in X}\left\{\langle g, u\rangle + h(x) + \tfrac{\gamma}{2}\|x-u\|^2\right\}
\end{align*}
admits a closed-form solution or is easily computable by some efficient computational procedure. We assume $\bar X:=\{x \in X| h(x) < +\infty\}$ is a closed convex set. \revision{}{Additionally, we assume $x_0 \in \bar X$ and $D_{\bar X, x_0}:= \max_{x, y\in \mathcal{E}_{\bar X}(x_0)}\|x-y\| < \infty$, where  $\mathcal{E}_{\bar X}(x_0):= \{x \in \bar X| \Psi(x) \leq \Psi(x_0)\}$ with a slight abuse of notation.}
To iteratively solve problem~\eqref{def_composite_problem_0}, we modify step \eqref{prox-mapping_0} in Algorithm~\ref{alg_1_0} to
\begin{align}\label{proximal_gradient_update}
  x_t = \arg \min_{x\in X} \left\{\langle \nabla f(x_{t-1}) , x \rangle + h(x) + \tfrac{\gamma_t}{2}\|x_{t-1} - x\|^2 \right\}. 
  \end{align}
  We define $P_{\bar X}(x, g, \gamma) := \gamma(x-x^+)$ and $g_{\bar X, t} \equiv P_{\bar X}(x_t, \nabla f(x_t), \gamma_{t+1}) := \gamma_{t+1}(x_{t} - x_{t+1})$.  The convergence guarantees for this proximal version of the PG method are characterized in the following theorem.

\begin{theorem}\label{thm_0_proximal}
Let $\{x_t\}$ be generated by the proximal gradient method with updates \eqref{proximal_gradient_update}. If the stepsizes satisfy $\gamma_t = \gamma \geq L, \forall\, t$, we have
\begin{align}\label{res_1_thm_0_proximal}
\min_{0\leq t \leq k-1}\|g_{\bar X,t}\|^2 \leq \left(\tsum_{t=1}^{k} \big(\tfrac{2t-1}{2\gamma} - \tfrac{tL}{2\gamma^2}\big)\right)^{-1}\left(\tfrac{\gamma}{2}\|x_0 - x^*\|^2+ \tfrac{lk}{2}D^{2}_{\bar X, x_0}\right).
\end{align}
Specifically, if $\gamma_t = L, \forall\, t$, then
\begin{align}\label{res_2_thm_0_proximal}
\min_{0\leq t \leq k-1}\|g_{\bar X,t}\|^2 \leq \tfrac{2L^2\|x_0 - x^*\|^2}{k(k-1)} + \tfrac{2 L l D^{2}_{\bar X, x_0}}{k-1}.
\end{align}
\end{theorem}
\begin{proof}
By the optimality condition of \eqref{proximal_gradient_update}, we have the following three-point inequality (e.g., Lemma 3.5 in \cite{LanBook2020}): for any $x \in X$,
\begin{align*}
\langle \nabla f(x_{t-1}), x_t - x \rangle + h(x_t)  + \tfrac{\gamma_t}{2}\|x_{t-1} - x_t\|^2 + \tfrac{\gamma_t}{2}\|x_t - x\|^2 \leq h(x) + \tfrac{\gamma_t}{2}\|x_{t-1} - x\|^2.
\end{align*}
Adding $f(x_{t-1})$ on both sides of the above inequality and rearranging the terms, we obtain
\begin{align}\label{eq_1_0_0_prox}
&f(x_{t-1}) + \langle \nabla f(x_{t-1}), x_t - x_{t-1}\rangle + h(x_t) + \tfrac{\gamma_t}{2}\|x_{t-1} - x_t\|^2 + \tfrac{\gamma_t}{2}\|x_t - x\|^2\nn\\
&\leq f(x_{t-1}) +\langle \nabla f(x_{t-1}), x - x_{t-1}\rangle + h(x) + \tfrac{\gamma_t}{2}\|x_{t-1} - x\|^2.
\end{align}
Then, by applying the upper curvature condition \eqref{smoothness_2} on the LHS of the above inequality and invoking the definition $\Psi(x) = f(x) + h(x)$, we obtain
\begin{align}\label{eq_1_0_prox}
&\Psi(x_t) + (\tfrac{\gamma_t}{2} - \tfrac{L}{2})\|x_{t-1} - x_t\|^2 + \tfrac{\gamma_t}{2}\|x_t - x\|^2\nn\\
&\leq f(x_{t-1}) +\langle \nabla f(x_{t-1}), x - x_{t-1}\rangle - f(x) + \Psi(x) + \tfrac{\gamma_t}{2}\|x_{t-1} - x\|^2.
\end{align}
Setting $x = x_{t-1}$ in the above inequality generates
\begin{align}\label{eq_2_0_prox}
(\gamma_t -\tfrac{L}{2})\|x_t - x_{t-1}\|^2 \leq \Psi(x_{t-1}) - \Psi(x_t).
\end{align}
Clearly, this inequality indicates that the function value $\Psi$ is non-increasing over iterations as we choose $\gamma_t \geq L/2$. Thus we have  $\forall t \in \mathbb{Z}_+, x_t \in \mathcal{E}_{\bar X}(x_0)$, and consequently, $\|x_t - x^*\|^2 \leq D_{\bar X, x_0}^2$. Meanwhile, setting $x = x^*$ in Ineq.~\eqref{eq_1_0_prox} and using the weakly convex condition $f(x_{t-1}) +\langle \nabla f(x_{t-1}), x^* - x_{t-1}\rangle - f(x^*) \leq  l\|x_{t-1} - x^*\|^2/2$ yields
\begin{align}\label{eq_3_0_prox}
&\Psi(x_t) + (\tfrac{\gamma_t}{2} - \tfrac{L}{2})\|x_{t-1} - x_t\|^2 + \tfrac{\gamma_t}{2}\|x_t - x^*\|^2\leq \Psi(x^*) + \tfrac{l}{2}\|x_{t-1} - x^*\|^2 + \tfrac{\gamma_t}{2}\|x_{t-1} - x^*\|^2.
\end{align}
By taking a telescope sum of the above inequality from $t=1$ to $k$, and applying $\gamma_t = \gamma$, we obtain
\begin{align}\label{eq_4_0_prox}
&\tsum_{t=1}^k \left[\Psi(x_t) -\Psi(x^*) + (\tfrac{\gamma}{2} - \tfrac{L}{2})\|x_{t-1} - x_t\|^2 \right]\nn\\
& \leq \tfrac{\gamma}{2}\|x_0 - x^*\|^2  - \tfrac{\gamma}{2}\|x_k -x^*\|^2 + \tsum_{t=1}^k \tfrac{l}{2}\|x_{t-1}-x^*\|^2\nn\\
& \leq \tfrac{\gamma}{2}\|x_0 - x^*\|^2 + \tfrac{lk}{2}D^{2}_{\bar X, x_0}.
\end{align}
On the other hand, we can lower bound the LHS of the above inequality by 
\begin{align}\label{eq_5_0_0_prox}
&\tsum_{t=1}^k \left[\Psi(x_t) -\Psi(x^*) + (\tfrac{\gamma}{2} - \tfrac{L}{2})\|x_{t-1} - x_t\|^2 \right]\nn\\
& \geq \tsum_{t=1}^k \left[\Psi(x_t) -\Psi(x_k) + (\tfrac{\gamma}{2} - \tfrac{L}{2})\|x_{t-1} - x_t\|^2 \right]\nn\\
& = \tsum_{t=1}^k \left[\tsum_{j=t}^{k-1}\big(\Psi(x_j) -\Psi(x_{j+1})\big) + (\tfrac{\gamma}{2} - \tfrac{L}{2})\|x_{t-1} - x_t\|^2 \right]\nn\\
& \overset{(i)}\geq \tsum_{t=1}^{k} \left[\tsum_{j=t}^{k-1}(\gamma - \tfrac{L}{2})\|x_j - x_{j+1}\|^2 + (\tfrac{\gamma}{2} - \tfrac{L}{2})\|x_{t-1} - x_t\|^2 \right]\nn\\
&  = \tsum_{t=1}^{k}\big((t-\tfrac{1}{2})\gamma - \tfrac{t}{2}L\big)\|x_{t-1} - x_t\|^2\nn\\
& = \tsum_{t=1}^{k}\big(\tfrac{2t-1}{2\gamma} - \tfrac{tL}{2\gamma^2}\big)\|g_{\bar X, t-1}\|^2,
\end{align}
where step $(i)$ follows from \eqref{eq_2_0_prox}.
By combining Ineqs.~\eqref{eq_4_0_prox} and \eqref{eq_5_0_0_prox}, we arrive at
\begin{align*}
\min_{0\leq t \leq k-1}\|g_{X,t}\|^2 &\leq \left(\tsum_{t=1}^{k} \big(\tfrac{2t-1}{2\gamma} - \tfrac{tL}{2\gamma^2}\big)\right)^{-1}\tsum_{t=1}^{k}\big(\tfrac{2t-1}{2\gamma} - \tfrac{tL}{2\gamma^2}\big)\|g_{\bar X, t-1}\|^2\\
&\leq \left(\tsum_{t=1}^{k} \big(\tfrac{2t-1}{2\gamma} - \tfrac{tL}{2\gamma^2}\big)\right)^{-1}\left(\tfrac{\gamma}{2}\|x_0 - x^*\|^2+ \tfrac{lk}{2}D^{2}_{\bar X, x_0}\right),
\end{align*}
which completes the proof.
\end{proof}

\vgap
It should be noted that, if $\bar X:=\{x \in X| h(x) < +\infty\}$ is a convex compact set, all other results proposed in this paper can be generalized to the composite optimization setting in a similar manner. 
}

{\color{black}
\subsection{A unified analysis for weakly convex, convex, and strongly convex problems}\label{subsection_strongly_convex_extension}
In this subsection, we consider the more general setting where $f$ can be potentially weakly convex, convex, or strongly convex, i.e., for $l \in [-L, L]$,
\begin{align}\label{general_convexity}
f(x)- f(y) - \langle \nabla f(y), x - y\rangle \geq - \tfrac{l}{2}\|x-y\|^2, \quad \forall x, y \in X.
\end{align}
In other words, we generalize Ineq.~\eqref{weakly_convex} for negative $l$ to allow strong convexity. For this more general setting, we can also provide unified convergence guarantees for the projected gradient method (Algorithm \ref{alg_1_0}) in terms of the norm of the projected gradient mapping.
\begin{theorem}\label{theorem_for_extension}
Let $\{x_t\}$ be generated by Algorithm \ref{alg_1_0} and $\{\theta_t\}$ be a sequence of nonnegative numbers. We have
\begin{align}\label{extension_theorem}
&\min_{0 \leq t \leq k-1}\|g_{X, t}\|^2\nn\\
&\leq \left( \tsum_{t=1}^k \left(\left( \tsum_{j=1}^{t}\theta_j\right)(\tfrac{1}{\gamma_t} - \tfrac{L}{2\gamma_t^2}) -\tfrac{\theta_t}{2\gamma_t}\right)\right)^{-1} \left(\tfrac{\theta_1(\gamma_1 +l)}{2}\|x_0-x^*\|^2 + \tsum_{t=1}^{k-1} \tfrac{\theta_{t+1}(\gamma_{t+1} + l)-\theta_t\gamma_t}{2}\|x_t - x^*\|^2  \right).
\end{align}
Specifically, if $l < 0$, we set $\gamma_t = L$ and $\theta_t = \theta^t := \left(\tfrac{L}{L+ l} \right)^t$, then we have\footnote{\revision{}{When $l = -L$, we have $\theta_t = +\infty$ and $\min_{0 \leq t \leq k-1}\|g_{X, t}\|^2 = 0$ for any $k \geq 2$, which means that the problem can be solved in one iteration.}} for any $k\geq 2$, 
\begin{align}\label{extension_strongly_convex}
\min_{0 \leq t \leq k-1}\|g_{X, t}\|^2\leq \left( \tsum_{t=2}^k \tsum_{j=1}^{t-1}\left(\tfrac{L}{L+l} \right)^{j}\right)^{-1} \cdot L^2\|x_0-x^*\|^2.
\end{align}
If $l \geq 0$, we set $\gamma_t = L$ and $\theta_t = 1$, $\forall t$, then we have for any $k\geq 2$, 
\begin{align}\label{extension_general_weakly_convex}
\min_{0\leq t \leq k-1}\|g_{X,t}\|^2 \leq \tfrac{2L^2\revision{D_X^2}{\|x_0 - x^*\|^2}}{k(k-1)} + \tfrac{2 L l \revision{D_X^2}{D_{X, x_0}^2}}{k-1}.
\end{align}
\end{theorem}
\begin{proof}
Following similar arguments as in the proof of Theorem~\ref{thm_0}, we can show that Ineqs.~\eqref{eq_2_0} and \eqref{eq_3_0} still hold, i.e.,
\begin{align}\label{eq_2_ext_0}
(\gamma_t -\tfrac{L}{2})\|x_t - x_{t-1}\|^2 \leq f(x_{t-1}) - f(x_t).
\end{align}
and
\begin{align}\label{eq_3_ext_0}
&f(x_t) + (\tfrac{\gamma_t}{2} - \tfrac{L}{2})\|x_{t-1} - x_t\|^2 + \tfrac{\gamma_t}{2}\|x_t - x^*\|^2\leq f(x^*) + \tfrac{l}{2}\|x_{t-1} - x^*\|^2 + \tfrac{\gamma_t}{2}\|x_{t-1} - x^*\|^2.
\end{align}
Here, we allow $l \in [-L, L]$ due to the more general assumption~\eqref{general_convexity}. Next, we multiply $\theta_t$ on both sides of Ineq.~\eqref{eq_3_ext_0} and take a telescope sum from $t=1$ to $k$. By rearranging the terms, we obtain
\begin{align*}
&\tsum_{t=1}^k \theta_t\left[ f(x_t) - f(x^*) + (\tfrac{\gamma_t}{2} - \tfrac{L}{2})\|x_{t-1}-x_t\|^2\right]\\
& \leq \tfrac{\theta_1(\gamma_1 +l)}{2}\|x_0-x^*\|^2 + \tsum_{t=1}^{k-1} \tfrac{\theta_{t+1}(\gamma_{t+1}+ l)-\theta_t\gamma_t}{2}\|x_t - x^*\|^2 - \tfrac{\theta_k\gamma_k}{2}\|x_k - x^*\|^2.
\end{align*}
Next, we lower bound the LHS of the above inequality, i.e.,
\begin{align*}
&\tsum_{t=1}^k \theta_t\left[ f(x_t) - f(x^*) + (\tfrac{\gamma_t}{2} - \tfrac{L}{2})\|x_{t-1}-x_t\|^2\right]\\
&\overset{(i)}\geq \tsum_{t=1}^k \theta_t\left[ f(x_t) - f(x_k) + (\tfrac{\gamma_t}{2} - \tfrac{L}{2})\|x_{t-1}-x_t\|^2\right]\\
& = \tsum_{t=1}^k \theta_t\left[ \tsum_{j=t}^{k-1}(f(x_j) - f(x_{j+1})) + (\tfrac{\gamma_t}{2} - \tfrac{L}{2})\|x_{t-1}-x_t\|^2\right]\\
& = \tsum_{t=2}^k \left[\left( \tsum_{j=1}^{t-1}\theta_j\right)\left(f(x_{t-1}) - f(x_t) \right) + \theta_t (\tfrac{\gamma_t}{2} - \tfrac{L}{2})\|x_{t-1}-x_t\|^2\right] + \theta_1 (\tfrac{\gamma_1}{2} - \tfrac{L}{2})\|x_{0}-x_1\|^2\\
& \overset{(ii)}\geq \tsum_{t=2}^k \left(\left( \tsum_{j=1}^{t-1}\theta_j\right)(\gamma_t - \tfrac{L}{2}) + \theta_t (\tfrac{\gamma_t}{2} - \tfrac{L}{2})\right)\|x_{t-1}-x_t\|^2+ \theta_1 (\tfrac{\gamma_1}{2} - \tfrac{L}{2})\|x_{0}-x_1\|^2\\
&  \overset{(iii)}= \tsum_{t=1}^k \left(\left( \tsum_{j=1}^{t}\theta_j\right)(\gamma_t - \tfrac{L}{2}) -  \tfrac{\theta_t\gamma_t}{2} \right)\|x_{t-1}-x_t\|^2\\
& \overset{(iv)}= \tsum_{t=1}^k \left(\left( \tsum_{j=1}^{t}\theta_j\right)(\tfrac{1}{\gamma_t} - \tfrac{L}{2\gamma_t^2}) -\tfrac{\theta_t}{2\gamma_t}\right)\|g_{X, t-1}\|^2,
\end{align*}
where step (i) follows from $f(x_k) \geq f(x^*)$, step (ii) follows from Ineq.~\eqref{eq_2_ext_0}, step (iii) follows from rearranging the terms, and step (iv) follows from the definition of $g_{X,t}$ in \eqref{def_projected_gradient}. By combining the two sets of inequalities above, we obtain
\begin{align*}
&\tsum_{t=1}^k \left(\left( \tsum_{j=1}^{t}\theta_j\right)(\tfrac{1}{\gamma_t} - \tfrac{L}{2\gamma_t^2}) -\tfrac{\theta_t}{2\gamma_t}\right)\|g_{X, t-1}\|^2\\
&\leq \tfrac{\theta_1(\gamma_1 +l)}{2}\|x_0-x^*\|^2 + \tsum_{t=1}^{k-1} \tfrac{\theta_{t+1}(\gamma_{t+1} + l)-\theta_t\gamma_t}{2}\|x_t - x^*\|^2 - \tfrac{\theta_k\gamma_k}{2}\|x_k - x^*\|^2, 
\end{align*}
which indicates that
\begin{align*}
&\min_{0 \leq t \leq k-1}\|g_{X, t}\|^2\\
&\leq \left( \tsum_{t=1}^k \left(\left( \tsum_{j=1}^{t}\theta_j\right)(\tfrac{1}{\gamma_t} - \tfrac{L}{2\gamma_t^2}) -\tfrac{\theta_t}{2\gamma_t}\right)\right)^{-1} \left(\tfrac{\theta_1(\gamma_1 +l)}{2}\|x_0-x^*\|^2 + \tsum_{t=1}^{k-1} \tfrac{\theta_{t+1}(\gamma_{t+1} + l)-\theta_t\gamma_t}{2}\|x_t - x^*\|^2  \right),
\end{align*}
and we complete the proof of \eqref{extension_theorem}. Ineq.~\eqref{extension_strongly_convex} follows from applying $\gamma_t = L$ and $\theta_t = \theta^t := \left(\tfrac{L}{L+ l} \right)^t$ in the inequality above and rearranging the term. Ineq.~\eqref{extension_general_weakly_convex} follows from applying $\gamma_t = L$ and $\theta_t = 1$ in the inequality above, and utilizing the non-ascent property to bound $\|x_t - x^*\|^2 \leq D_{X, x_0}^2$.
\end{proof}
\vgap

In view of Ineq.~\eqref{extension_strongly_convex} in Theorem~\ref{theorem_for_extension}, in the strongly convex setting where $l<0$, to find an $\epsilon$-stationary solution such that $\|g_{X, t}\| \leq \epsilon$, the number of PG iterations is bounded by
\begin{align*}
\mathcal{O}\left(\left(-\tfrac{L}{l} \right)\log\left( \tfrac{L\|x_0 - x^*\|}{\epsilon}\right)\right).
\end{align*}
}

\renewcommand\refname{Reference}

\bibliographystyle{abbrv}
\bibliography{revised_manuscript}

@preamble{ "\newcommand{\noopsort}[1]{} "
    # "\newcommand{\printfirst}[2]{#1} "
    # "\newcommand{\singleletter}[1]{#1} "
    # "\newcommand{\switchargs}[2]{#2#1} " }

@string{mprog   = "Mathematical Programming"}

@article{LanOuyangZhang2023,
 author="G. Lan and Y. Ouyang and Z. Zhang",
 title="Optimal and parameter-free gradient minimization methods for convex and nonconvex optimization",
 journal="arXiv preprint arXiv:2310.12139",
 year="2023"
 }

@misc{LanMonteiro2018,
  author       = {G. Lan and R.D.C. Monteiro},
  title        = {Private communication},
  howpublished = {Personal communication},
  year         = {2018},
  note         = {Unpublished},
  institute    = {Georgia Institute of Technology}
}

@book{LanBook2020,
  title={First-order and stochastic optimization methods for machine learning},
  author={Lan, Guanghui},
  volume={1},
  year={2020},
  publisher={Springer}
}

@article{ChaSinKee08-1,
 author = "O. Chapelle and V. Sindhwani and S. S. Keerthi",
 title = "Optimization techniques for semi-supervised support vector machines",
 journal = "Journal of Machine Learning Research",
 volume = "9",
 pages = "203-233",
 year = "2008"
 }

@article{GhaLanZhang13-1,
 author = "S. Ghadimi and G. Lan and H. Zhang",
 title = "Mini-batch stochastic approximation methods for constrained nonconvex stochastic programming",
 journal = mprog,
 year = "2016",
 volume="155",
 pages="267-305"
 }

@book{nesterov2004introductory,
	author = "Y. E. Nesterov",
	title = "Introductory Lectures on Convex Optimization: A Basic Course",
	publisher = "Kluwer Academic Publishers",
	address = "Massachusetts",
	year = "2004"
}

@inproceedings{nesterov1983method,
  title={A method for unconstrained convex minimization problem with the rate of convergence {$O(1/k^2)$}},
  author={Nesterov, Yurii},
  booktitle={Doklady an ussr},
  volume={269},
  pages={543--547},
  year={1983}
}

@article{rockafellar1976monotone,
	title={Monotone operators and the proximal point algorithm},
	author={Rockafellar, R Tyrrell},
	journal={SIAM Journal on Control and Optimization},
	volume={14},
	number={5},
	pages={877--898},
	year={1976},
	publisher={SIAM}
}

@inproceedings{mancino2023proximal,
  title={Proximal stochastic recursive momentum methods for nonconvex composite decentralized optimization},
  author={Mancino-Ball, Gabriel and Miao, Shengnan and Xu, Yangyang and Chen, Jie},
  booktitle={Proceedings of the AAAI Conference on Artificial Intelligence},
  volume={37},
  number={7},
  pages={9055--9063},
  year={2023}
}

@article{ghadimi2016mini,
  title={Mini-batch stochastic approximation methods for nonconvex stochastic composite optimization},
  author={Ghadimi, Saeed and Lan, Guanghui and Zhang, Hongchao},
  journal={Mathematical Programming},
  volume={155},
  number={1},
  pages={267--305},
  year={2016},
  publisher={Springer}
}

@article{ivgi2023dog,
  title={DoG is {SGD}'s Best Friend: A Parameter-Free Dynamic Step Size Schedule},
  author={Ivgi, Maor and Hinder, Oliver and Carmon, Yair},
  journal={arXiv preprint arXiv:2302.12022},
  year={2023}
}

@article{malitsky2019adaptive,
  title={Adaptive gradient descent without descent},
  author={Malitsky, Yura and Mishchenko, Konstantin},
  journal={arXiv preprint arXiv:1910.09529},
  year={2019}
}

@article{armijo1966minimization,
  title={Minimization of functions having {L}ipschitz continuous first partial derivatives},
  author={Armijo, Larry},
  journal={Pacific Journal of mathematics},
  volume={16},
  number={1},
  pages={1--3},
  year={1966},
  publisher={Mathematical Sciences Publishers}
}

@inproceedings{carmon2022making,
  title={Making {SGD} parameter-free},
  author={Carmon, Yair and Hinder, Oliver},
  booktitle={Conference on Learning Theory},
  pages={2360--2389},
  year={2022},
  organization={PMLR}
}

@article{latafat2023adaptive,
  title={Adaptive proximal algorithms for convex optimization under local Lipschitz continuity of the gradient},
  author={Latafat, Puya and Themelis, Andreas and Stella, Lorenzo and Patrinos, Panagiotis},
  journal={arXiv preprint arXiv:2301.04431},
  year={2023}
}

@article{liang2021average,
  title={An average curvature accelerated composite gradient method for nonconvex smooth composite optimization problems},
  author={Liang, Jiaming and Monteiro, Renato DC},
  journal={SIAM Journal on Optimization},
  volume={31},
  number={1},
  pages={217--243},
  year={2021},
  publisher={SIAM}
}

@article{orabona2016coin,
  title={Coin betting and parameter-free online learning},
  author={Orabona, Francesco and P{\'a}l, D{\'a}vid},
  journal={Advances in Neural Information Processing Systems},
  volume={29},
  year={2016}
}

@article{cartis2010complexity,
  title={On the complexity of steepest descent, {N}ewton's and regularized {N}ewton's methods for nonconvex unconstrained optimization problems},
  author={Cartis, Coralia and Gould, Nicholas IM and Toint, Ph L},
  journal={SIAM journal on optimization},
  volume={20},
  number={6},
  pages={2833--2852},
  year={2010},
  publisher={SIAM}
}

@article{ghadimi2013stochastic,
  title={Stochastic first-and zeroth-order methods for nonconvex stochastic programming},
  author={Ghadimi, Saeed and Lan, Guanghui},
  journal={SIAM journal on optimization},
  volume={23},
  number={4},
  pages={2341--2368},
  year={2013},
  publisher={SIAM}
}

@article{ghadimi2016accelerated,
  title={Accelerated gradient methods for nonconvex nonlinear and stochastic programming},
  author={Ghadimi, Saeed and Lan, Guanghui},
  journal={Mathematical Programming},
  volume={156},
  number={1},
  pages={59--99},
  year={2016},
  publisher={Springer}
}

@article{liang2023average,
  title={Average curvature {FISTA} for nonconvex smooth composite optimization problems},
  author={Liang, Jiaming and Monteiro, Renato DC},
  journal={Computational Optimization and Applications},
  volume={86},
  number={1},
  pages={275--302},
  year={2023},
  publisher={Springer}
}

@article{zhou2024adabb,
  title={AdaBB: Adaptive {B}arzilai-{B}orwein method for convex optimization},
  author={Zhou, Danqing and Ma, Shiqian and Yang, Junfeng},
  journal={arXiv preprint arXiv:2401.08024},
  year={2024}
}

@article{li2023simple,
  title={A simple uniformly optimal method without line search for convex optimization},
  author={Li, Tianjiao and Lan, Guanghui},
  journal={Mathematical Programming},
  pages={1--38},
  year={2025},
  publisher={Springer}
}

@article{lan2024auto,
  title={Auto-conditioned primal-dual hybrid gradient method and alternating direction method of multipliers},
  author={Lan, Guanghui and Li, Tianjiao},
  journal={arXiv preprint arXiv:2410.01979},
  year={2024}
}

@article{fang2018spider,
  title={Spider: Near-optimal non-convex optimization via stochastic path-integrated differential estimator},
  author={Fang, Cong and Li, Chris Junchi and Lin, Zhouchen and Zhang, Tong},
  journal={Advances in neural information processing systems},
  volume={31},
  year={2018}
}

@article{cutkosky2019momentum,
  title={Momentum-based variance reduction in non-convex {SGD}},
  author={Cutkosky, Ashok and Orabona, Francesco},
  journal={Advances in neural information processing systems},
  volume={32},
  year={2019}
}

@article{wang2019spiderboost,
  title={Spiderboost and momentum: Faster variance reduction algorithms},
  author={Wang, Zhe and Ji, Kaiyi and Zhou, Yi and Liang, Yingbin and Tarokh, Vahid},
  journal={Advances in Neural Information Processing Systems},
  volume={32},
  year={2019}
}

@article{tran2022hybrid,
  title={A hybrid stochastic optimization framework for composite nonconvex optimization},
  author={Tran-Dinh, Quoc and Pham, Nhan H and Phan, Dzung T and Nguyen, Lam M},
  journal={Mathematical Programming},
  volume={191},
  number={2},
  pages={1005--1071},
  year={2022},
  publisher={Springer}
}

@article{xu2023momentum,
  title={Momentum-based variance-reduced proximal stochastic gradient method for composite nonconvex stochastic optimization},
  author={Xu, Yangyang and Xu, Yibo},
  journal={Journal of Optimization Theory and Applications},
  volume={196},
  number={1},
  pages={266--297},
  year={2023},
  publisher={Springer}
}

@article{arjevani2023lower,
  title={Lower bounds for non-convex stochastic optimization},
  author={Arjevani, Yossi and Carmon, Yair and Duchi, John C and Foster, Dylan J and Srebro, Nathan and Woodworth, Blake},
  journal={Mathematical Programming},
  volume={199},
  number={1},
  pages={165--214},
  year={2023},
  publisher={Springer}
}

@article{hare2009computing,
  title={Computing proximal points of nonconvex functions},
  author={Hare, Warren and Sagastiz{\'a}bal, Claudia},
  journal={Mathematical Programming},
  volume={116},
  number={1},
  pages={221--258},
  year={2009},
  publisher={Springer}
}

@article{davis2019proximally,
  title={Proximally guided stochastic subgradient method for nonsmooth, nonconvex problems},
  author={Davis, Damek and Grimmer, Benjamin},
  journal={SIAM Journal on Optimization},
  volume={29},
  number={3},
  pages={1908--1930},
  year={2019},
  publisher={SIAM}
}

@article{davis2019stochastic,
  title={Stochastic model-based minimization of weakly convex functions},
  author={Davis, Damek and Drusvyatskiy, Dmitriy},
  journal={SIAM Journal on Optimization},
  volume={29},
  number={1},
  pages={207--239},
  year={2019},
  publisher={SIAM}
}

@book{sutton2018reinforcement,
  title={Reinforcement learning: An introduction},
  author={Sutton, Richard S and Barto, Andrew G},
  year={2018},
  publisher={MIT press}
}

@inproceedings{mairal2009online,
  title={Online dictionary learning for sparse coding},
  author={Mairal, Julien and Bach, Francis and Ponce, Jean and Sapiro, Guillermo},
  booktitle={Proceedings of the 26th annual international conference on machine learning},
  pages={689--696},
  year={2009}
}

@article{mason1999boosting,
  title={Boosting algorithms as gradient descent},
  author={Mason, Llew and Baxter, Jonathan and Bartlett, Peter and Frean, Marcus},
  journal={Advances in neural information processing systems},
  volume={12},
  year={1999}
}

@inproceedings{andradottir1998review,
  title={A review of simulation optimization techniques},
  author={Andrad{\'o}ttir, Sigr{\'u}n},
  booktitle={1998 winter simulation conference. Proceedings (Cat. No. 98CH36274)},
  volume={1},
  pages={151--158},
  year={1998},
  organization={IEEE}
}

@article{shi2014sparse,
  title={Sparse bilinear logistic regression},
  author={Shi, Jianing V and Xu, Yangyang and Baraniuk, Richard G},
  journal={arXiv preprint arXiv:1404.4104},
  year={2014}
}

@book{puterman2014markov,
  title={Markov decision processes: discrete stochastic dynamic programming},
  author={Puterman, Martin L},
  year={2014},
  publisher={John Wiley \& Sons}
}

@inproceedings{liu2015sparse,
  title={Sparse convolutional neural networks},
  author={Liu, Baoyuan and Wang, Min and Foroosh, Hassan and Tappen, Marshall and Pensky, Marianna},
  booktitle={Proceedings of the IEEE conference on computer vision and pattern recognition},
  pages={806--814},
  year={2015}
}

@article{fu2002optimization,
  title={Optimization for simulation: Theory vs. practice},
  author={Fu, Michael C},
  journal={INFORMS Journal on Computing},
  volume={14},
  number={3},
  pages={192--215},
  year={2002},
  publisher={INFORMS}
}

@article{zhao2023survey,
  title={A survey of large language models},
  author={Zhao, Wayne Xin and Zhou, Kun and Li, Junyi and Tang, Tianyi and Wang, Xiaolei and Hou, Yupeng and Min, Yingqian and Zhang, Beichen and Zhang, Junjie and Dong, Zican and others},
  journal={arXiv preprint arXiv:2303.18223},
  year={2023}
}

@article{grimmer2025accelerated,
  title={Accelerated objective gap and gradient norm convergence for gradient descent via long steps},
  author={Grimmer, Benjamin and Shu, Kevin and Wang, Alex L},
  journal={INFORMS Journal on Optimization},
  volume={7},
  number={2},
  pages={156--169},
  year={2025},
  publisher={INFORMS}
}

@article{bok2024accelerating,
  title={Accelerating Proximal Gradient Descent via Silver Stepsizes},
  author={Bok, Jinho and Altschuler, Jason M},
  journal={arXiv preprint arXiv:2412.05497},
  year={2024}
}

@article{kim2021optimizing,
  title={Optimizing the efficiency of first-order methods for decreasing the gradient of smooth convex functions},
  author={Kim, Donghwan and Fessler, Jeffrey A},
  journal={Journal of optimization theory and applications},
  volume={188},
  number={1},
  pages={192--219},
  year={2021},
  publisher={Springer}
}

@article{teboulle2023elementary,
  title={An elementary approach to tight worst case complexity analysis of gradient based methods},
  author={Teboulle, Marc and Vaisbourd, Yakov},
  journal={Mathematical Programming},
  volume={201},
  number={1},
  pages={63--96},
  year={2023},
  publisher={Springer}
}

@article{bottou2018optimization,
  title={Optimization methods for large-scale machine learning},
  author={Bottou, L{\'e}on and Curtis, Frank E and Nocedal, Jorge},
  journal={SIAM review},
  volume={60},
  number={2},
  pages={223--311},
  year={2018},
  publisher={SIAM}
}

\end{document}